\title{Highest weight $\sl_2$-categorifications II: structure theory}
\author{Ivan Losev}
\address{Department
of Mathematics, Northeastern University, Boston MA 02115 USA}
\email{i.loseu@neu.edu}
\thanks{Supported by the NSF grants DMS-0900907, DMS-1161584}
\thanks{MSC 2010: Primary 18D99,05E10; Secondary 16G99,17B10,20G05}
\renewcommand{\sl}{\mathfrak{sl}}
\newcommand{\Cat}{\mathcal{C}}
\newcommand{\OCat}{\mathcal{O}}
\newcommand{\C}{\mathbb{K}}
\newcommand{\Q}{\mathbb{Q}}
\newcommand{\Z}{\mathbb{Z}}
\newcommand{\wt}{\operatorname{wt}}
\newcommand{\End}{\operatorname{End}}
\newcommand{\cont}{\operatorname{cont}}
\newcommand{\Hom}{\operatorname{Hom}}
\newcommand{\Ext}{\operatorname{Ext}}
\newcommand{\head}{\mathsf{head}}
\newcommand{\res}{\operatorname{res}}
\newcommand{\gl}{\mathfrak{gl}}
\newcommand{\GL}{\operatorname{GL}}
\newcommand{\Fun}{\mathcal{F}}
\newcommand{\Eun}{\mathcal{E}}
\numberwithin{equation}{section}
\newtheorem{Thm}{Theorem}[section]
\newtheorem{Prop}[Thm]{Proposition}
\newtheorem{Lem}[Thm]{Lemma}
\theoremstyle{definition}
\newtheorem{Rem}[Thm]{Remark}
\numberwithin{table}{section} \oddsidemargin=0cm
\begin{document}
\begin{abstract}
This paper continues the study of highest weight categorical $\sl_2$-actions initiated in part I. We
start by refining the definition given there and showing that all examples considered in part I
are also highest weight categorifications in the refined sense. Then we prove that
any highest weight $\sl_2$-categorification can be filtered in such a way that the successive quotients
are so called basic highest weight $\sl_2$-categorifications. For a basic highest weight categorification
we determine minimal projective resolutions of standard objects. We use this, in particular, to examine the structure
of tilting objects in basic categorifications and to show that the Ringel duality is given by
the Rickard complex. We apply some of these structural results to categories $\mathcal{O}$
for cyclotomic Rational Cherednik algebras.
\end{abstract}
\maketitle
\tableofcontents
\section{Introduction}
Categorical $\sl_2$-actions (=$\sl_2$-categorifications) were introduced by Chuang and Rouquier in \cite{CR}
to establish derived equivalences for blocks of the symmetric groups in positive characteristic.
In \cite{cryst}  we have introduced the notion of a highest weight $\sl_2$-categorification
and used that to describe crystal structures for many classical highest weight categories appearing in Representation theory:
the categories of rational and polynomial representations of $\GL$, the parabolic categories $\mathcal{O}$
of type $A$ and the categories $\mathcal{O}$ over the cyclotomic Rational Cherednik algebras. In this
paper we are going to study the structural features of highest weight $\sl_2$-categorifications.

The definition of a highest weight $\sl_2$-categorification should incorporate some compatibility conditions
between the $\sl_2$-categorification and the highest weight structure. The conditions that appeared in \cite[4.1]{cryst}
can be divided into two groups: the compatibility of the action with standard objects (axioms (HWC0),(HWC2) in loc.cit.)
and also the compatibility of the action with an ordering on the category (axioms (HWC1),(HWC3),(HWC4)). In this
paper we will essentially keep (HWC0),(HWC2) but we will need to modify the compatibility with orderings.
Namely, we will define so called {\it hierarchy structures} on posets and check that posets of basically all
highest weight categories of interest can be equipped with such structures. In our new definition of a highest
weight $\sl_2$-categorification we will require the poset of a highest weight category to admit a hierarchy
structure.

Let us sketch an easy, yet very important, example coming from the Lie representation theory. Namely, consider
the BGG category $\mathcal{O}$ for $\gl_n(\mathbb{C})$ and the sum of its blocks with $\rho$-shifted
highest weights of the form $(x_1,\ldots,x_n), x_i=0$ or $1$.
Denote the sum by $\Cat$. Its poset is the set $\{+,-\}^n$, where to a highest weight $(x_1,\ldots,x_n)$
we assign the $n$-tuple $t\in \{+,-\}^n$ with $t_i=+$ if $x_i=0$ and $t_i=-$ if $x_i=1$. The categorification
functors $E,F$ come from the tensor products with the $\gl_n$-modules $\mathbb{C}^n, (\mathbb{C}^n)^*$ followed by
taking projections to appropriate blocks.    As an $\sl_2$-module,  the Grothendieck group  $[\Cat]$ of $\Cat$ is
identified with $(\Q^2)^{\otimes n}$,  while the classes of standards are
the monomial elements. Roughly speaking, a highest weight categorification with the latter property will be called
{\it basic}. 

There is a reason why we call such categorifications basic: any highest weight categorification can be filtered
in such a way that the subsequent quotients are basic categories, see Subsection \ref{SS_family_filtr}.
More precisely, in an $\sl_2$-categorification $\Cat$ one can consider a filtration $0\subset\Cat_{\leqslant 0}\subset\Cat_{\leqslant 1}
\subset \ldots =\Cat$ by Serre subcategories such that all $\Cat_{\leqslant i}$ are stable with respect to the categorical action. Then
the subsequent quotients $\Cat_i:=\Cat_{\leqslant i}/\Cat_{\leqslant i-1}$ carry $\sl_2$-categorifications.
On the other hand, in a highest weight category $\Cat$ we can consider a filtration $\Cat_{\leqslant i}$ by Serre subcategories
such that the labeling set $\Lambda_{\leqslant i}$ is a poset ideal. Then $\Cat_{\leqslant i}/\Cat_{\leqslant i-1}$
has a natural highest weight structure. In a highest weight $\sl_2$-categorification $\Cat$ we can find a filtration $\Cat_{\leqslant i}$
such that both properties hold and  the subsequent quotients $\Cat_{\leqslant i}/\Cat_{\leqslant i-1}$
are basic highest weight $\sl_2$-categorifications. This can be regarded as a highest weight analog of
\cite[Theorem 5.24, Remark 5.25]{CR}.

A filtration as in the previous paragraph allows to reduce some questions about general highest weight categorifications
to basic ones. In particular, in Proposition \ref{Prop:E_stand} we will describe the heads of the objects of the form $E\Delta(\lambda)$, where $\Delta(\lambda)$ is a standard object. A solution for this problem is known in some special cases, see, for example, \cite{BK_functors} and our answer can be regarded as a generalization of that. Our most important result about the structure of basic categorifications
is a description of minimal projective resolutions of standard objects. We will see, in particular, that the description is the same as
for the example of a basic categorification described above (where it is classical).

Let us describe the structure of this paper. In Section \ref{S_hier} we introduce a combinatorial structure -- a hierarchy --
that a poset of a highest weight $\sl_2$-categorification will be supposed to have. We equip some classical posets, such
as parabolic highest weights or multipartitions, with hierarchy structures. We also introduce the notion of a dual hierarchy
structure. We need this because there is a natural naive duality for $\sl_2$-categorifications (swapping the categorification
functors $E$ and $F$) that does not preserve a hierarchy structure but rather maps it into its dual.

In Section  \ref{S_hw_cat} we (re)introduce highest weight $\sl_2$-categorifications. These are highest weight
categories, whose posets are equipped with a (dual) hierarchy structure, such that the categorification functors
are compatible with the highest weight structure on the category and with the hierarchy structure on the poset.
Then we show that the examples that have already appeared in \cite{cryst} are highest weight categorifications
in this new sense as well.

In Section \ref{S_cat_spl} we introduce an important technical tool to study highest weight $\sl_2$-\!\! categorifications:
categorical splitting. This is a categorical version of a splitting structure on the poset that is a part of
a hierarchy structure. We refer the reader to the beginning of Section \ref{S_cat_spl} for details. Using the
categorical splitting we prove the filtration result, Proposition \ref{Prop:filtration}, mentioned above.

Section \ref{S_proj_resol} is a central part of this paper. There we  determine, Theorem \ref{Thm_proj_resol}, a minimal projective resolution of a standard object in a basic categorification, equivalently, compute the dimensions of the exts between standard
and irreducible objects. As an application we determine the head of an object of the form $E\Delta(\lambda)$,
Proposition \ref{Prop:E_stand}. Also we deduce some information about the indecomposable summands of $E P(\lambda), F P(\lambda)$
for general categorifications. Finally, we describe the structure of the objects of the form $E L(t), F L(t)$,

In Section \ref{S_Ringel} we study the Ringel duality for a basic categorification. We show that the Ringel dual
of a (basic) highest weight $\sl_2$-categorification is again a (basic) highest weight $\sl_2$-categorification.
This allows us to deduce the information
about tilting objects from the known information about projectives. Further, we show
that, in a basic categorification, the reflection functor (=the Rickard complex, see \cite[6.1]{CR})
actually performs the Ringel duality. \footnote{In a joint work with B. Webster, \cite{LW},
 we have recently proved that a basic categorification is unique. So, in characteristic 0, results of Sections \ref{S_proj_resol} and \ref{S_Ringel} give just new proofs of the aforementioned facts for the standard basic categorifications. I do not know whether,
in positive characteristic, these results were known previously.}
We also compute the Hom spaces between the standard objects.

In Section \ref{S_Cher_appl} we provide some applications to the categories $\mathcal{O}$
for cyclotomic Rational Cherednik algebras.

In the final section of this paper we will list some open problems.

{\bf Acknowledgements.} I would like to thank R. Bezrukavnikov, J. Brundan, S. Cautis, M. Feigin,  S. Griffeth, J. Kamnitzer, A. Kleshchev, A. Lauda, C. Stroppel,  and B. Webster for stimulating discussions. I also would like to thank the referee for many remarks on the previous
version of this paper.

\section{Notation} Here we gather some notation used in the paper.

Let $\Cat$ be  a highest weight category with poset $\Lambda$. By $\Cat^\Delta$ we denote the full subcategory of
standardly filtered objects of $\Cat$. By $\Cat$-$\operatorname{proj}$ (resp., $\Cat$-$\operatorname{tilt}$) we denote
the subcategories of projective (resp., tilting) objects in $\Cat$. These are subcategories of $\Cat^\Delta$.
Also by $\Cat^\nabla$ we denote the full subcategory of costandardly filtered objects. We write $\Cat^{opp}$
for the opposite category of $\Cat$.  We write $[\Cat]$ for the rational form of the Grothendieck group of $\Cat$.

For $\lambda\in \Lambda$ let $\Delta(\lambda),\nabla(\lambda),L(\lambda),P(\lambda),T(\lambda)$
denote the standard, costandard, simple, projective, tilting objects corresponding to $\lambda$.

For a functor $\varphi$ let $\varphi^*,\varphi^!$ denote its right and left adjoint, respectively.

\section{Hierarchy structures}\label{S_hier}
\subsection{Definition}\label{SS_hier_def}
Let $\Lambda$ be a  poset. Recall that a subset $I$ in a poset $\Lambda$ is called
an {\it ideal}, if $\lambda\in I$ and $\mu\leqslant \lambda$ implies $\mu\in I$. By a {\it coideal}
one means the complement of an ideal.

A {\it hierarchy} structure on $\Lambda$ will be a collection of additional structures.
The first one, a {\it family} structure, has already appeared in \cite{cryst}, this is a collection of triples $(\Lambda_a,n_a,\sigma_a)$
indexed by elements $a$ of some indexing set $\mathfrak{A}$. Here $\Lambda_a$ is a subset of $\Lambda$ (to be called a {\it family}),
$n_a$ is a non-negative integer, and  $\sigma_a$ is a bijection $ \{+,-\}^{n_a}\xrightarrow{\sim} \Lambda_a$.
We require $\Lambda=\bigsqcup_{a\in \mathfrak{A}}\Lambda_a$.  We equip $\{+,-\}^{n_a}$ with the following dominance ordering: $(t_1,\ldots,t_n)\leqslant (t_1',\ldots,t_n')$ if, for all $m$, the
number of $+$'s among $t_1,\ldots,t_m$ is bigger than or equal to the number of $+$'s among $t_1',\ldots,t_m'$
and the total number of $+$'s in $t$ and $t'$ is the same (so the maximal elements
are of the form $-\ldots-+\ldots+$, while the minimal elements are of the form $+\ldots+-\ldots-$).
We require that $\sigma_a^{-1}:\Lambda_a\rightarrow \{+,-\}^{n_a}$ is increasing for any $a$.

For the future use, let us remark that the family structure gives rise to an $\sl_2$-action on $\mathbb{Q}^\Lambda$.
Namely, let $v_+,v_-$ be a basis of $\mathbb{Q}^2$ such that the $\sl_2$-action is given by $ev_+=v_-, fv_-=v_+, ev_-=fv_+=0$.
We can identify $\mathbb{Q}^{\Lambda_a}$ with $(\mathbb{Q}^2)^{\otimes n_a}$ by mapping the basis vector in $\mathbb{Q}^{\Lambda_a}$
corresponding to $t\in \Lambda_a$ to $v_{t_1}\otimes v_{t_2}\otimes\ldots \otimes v_{t_n}$. Since $\mathbb{Q}^{\Lambda}=\bigoplus_{a\in \mathfrak{A}}\mathbb{Q}^{\Lambda_a}$, we get an $\sl_2$-action on $\mathbb{Q}^{\Lambda}$.

Let us consider  a relatively simple example of a family structure, more examples will be provided in the next subsection.
Set $\Lambda:=\mathcal{P}$, the poset of partitions with respect to the dominance ordering:
we say that $\lambda<\mu$ if $|\lambda|=|\mu|$ (where as usual, $|\lambda|$ is the number partitioned by $\lambda$),
$ \lambda\neq\mu$ and $\sum_{i=1}^k \lambda_i\leqslant \sum_{i=1}^k \mu_i$
for each $k$. Modulo $|\lambda|=|\mu|$ the last condition is equivalent to
$\sum_{i\geqslant k}\lambda_i\geqslant \sum_{i\geqslant k}\mu_i$.
Let $N$ be a non-negative integer. Pick a residue $r$ modulo $N$ (if $N=0$, then $r$ is
just an integer). By an $r$-box we mean a box $(x,y)$ (where $x$ is the number of a row and $y$ is the number of
a column) whose content $y-x$ is congruent to $r$ modulo $N$. We depict the partitions in the ``French style'':
they are located in the positive quadrant. 

Define a family structure on $\Lambda$ as follows, compare to \cite[4.2]{cryst}: two partitions $\lambda_1,\lambda_2$ lie in the same family if the partitions obtained from $\lambda_1,\lambda_2$ by removing all removable $r$-boxes coincide; for a family $\Lambda_a$
the map $\sigma_a$ is obtained by reading all addable/removable boxes from bottom to top, writing a + for an addable
box and a $-$ for a removable one. For example, if $N=3, r=1$, and $\lambda=(5,3^4)$, where the superscript means
the multiplicity, then the $\sigma_a(\lambda)=--+$.

In the general setting, the second structure will be a collection of  partitions of $\Lambda$,
one for each $a\in \mathfrak{A}$.
Namely, fix $a\in \mathfrak{A}$. To $a$ we assign a partition $\Lambda=\Lambda_{<}^a\sqcup \underline{\Lambda}^a_-\sqcup\underline{\Lambda}^a_+\sqcup \Lambda_{>}^a$. We require such
partitions to satisfy the following axioms.

\begin{itemize}
\item[(S0)] For each $a$, the subsets $\Lambda^a_{<}, \Lambda^a_<\sqcup\underline{\Lambda}^a_-, \Lambda^a_<\sqcup\underline{\Lambda}^a_-\sqcup\underline{\Lambda}^a_+$ are poset ideals in $\Lambda$.
\item[(S1)] $n_a=0$ if and only if $\underline{\Lambda}^a_-, \underline{\Lambda}^a_+=\varnothing$.
\item[(S2)] For each  $a,b$ the family $\Lambda_b$ is contained either in $\Lambda^a_{<}$ or
in $\Lambda^a_{>}$ or in $\Lambda^a_=:=\underline{\Lambda}^a_-\sqcup \underline{\Lambda}^a_+$.
Moreover, suppose $\Lambda_b\subset \Lambda_=^a$. An element $\lambda\in \Lambda_b$ is contained in
$\underline{\Lambda}^a_?$ if and only if the rightmost entry of $\sigma^{-1}_b(\lambda)$ is $?$ (for $?=+,-$).
\item[(S3)] Let $a,b\in \mathfrak{A}$. If $\Lambda_b\subset \Lambda_=^a$, then $\underline{\Lambda}^b_?=
\underline{\Lambda}^a_?$ for $?=+,-$, and $\Lambda^a_{<}=\Lambda^b_{<}, \Lambda^a_{>}=\Lambda^b_{>}$. The inclusion $\Lambda_b\subset \Lambda^a_>$ holds if and only if
$\Lambda_a\subset \Lambda^b_<$.
\item[(S4)] Let $a\in A$. Then there is a (automatically, unique)
poset isomorphism $\iota: \underline{\Lambda}^a_-\rightarrow \underline{\Lambda}^a_+$
that maps $\underline{\Lambda}^a_-\cap \Lambda_b$ to $\underline{\Lambda}^a_+\cap \Lambda_b$ such that if $\sigma_b^{-1}(\lambda)=t-$
for $t\in \{+,-\}^{n_b-1}$, then $\sigma_b^{-1}(\iota(\lambda))=t+$.
\end{itemize}

The assignment $a\mapsto (\Lambda^a_<, \underline{\Lambda}^a_-,\underline{\Lambda}^a_+,\Lambda^a_>), a\in \mathfrak{A},$ will be called
a {\it splitting structure}.

Let us illustrate this by the example of $\Lambda=\mathcal{P}$ as above.
Define a splitting structure on $\Lambda$ as follows. For an integer $k$ let $|\lambda|^k$ denote the number of
boxes in $\lambda$ with content $k$. Define a new ordering $\prec$ on $\mathcal{P}$: $\lambda\prec\mu$
if there exists $k$ with $|\lambda|^{l}=|\mu|^{l}$ for all $l<k$ and $|\lambda|^k>|\mu|^k$.
We remark that $\lambda<\mu$ implies $\lambda\prec\mu$.

Pick a family $\Lambda_a$ and let $(x,y)$ be the top-most
addable/removable $r$-box for this family with content, say, $m$.
We remark that $\lambda_x$ can have one of the two values, say $s,s+1$.
Also for $\lambda\in\Lambda_a$ the numbers $|\lambda|^k$ do not depend
on the choice of $\lambda$ as long as $k<m$.

Let $\Lambda^a_{>}$ consist of all partitions $\mu$ such that
\begin{itemize}
\item[(i)] either there is $k<m$ with $|\lambda|^l=|\mu|^l$ for all $l<k$ and $|\lambda|^k>|\mu|^k$,
\item[(ii)] or $|\lambda|^l=|\mu|^l$ for all $l<m$ and $\mu_x<s$,
\item[(iii)] or $|\lambda|^l=|\mu|^l$ for all $l<m, \mu_x=\mu_{x-1}=s$.
\end{itemize}
Let $\Lambda^a_=$ consist of all partitions with $|\lambda|^l=|\mu|^l$ for all $l>m$,
while $\mu_x=s,s+1$, $\mu_{x-1}>s$. We partition $\Lambda^a_=$ into the union $\underline{\Lambda}^a_+\sqcup
\underline{\Lambda}^a_-$ according to the value of $\mu_x$: a partition $\mu$ is in
$\underline{\Lambda}^a_+$ if and only if $\mu_x=s$. Finally, let $\Lambda^a_<$ consist of the remaining
partitions.

It is easy to see that (S0) is satisfied for $\prec$ and hence for $<$ too. (S1) is
straightforward. Let us check (S2). It is easy to see that $\Lambda^a_=$ is a union of families
(exactly those, where $(x,y)$ is a top addable/removable box). It remains to verify that
$\Lambda^a_>$ is a union of families. So let $\lambda\in \Lambda_a,
\mu\in \Lambda^a_{>}$ with $k$ being as in the previous paragraph. Assume (i) holds.
Let $x'$ be the largest number such that the $x'$th row in $\mu$
contains a box with content $k$. Of course, $x'>x$, in particular, $\lambda$ has
no addable/removable $r$-boxes in rows $x'$ or higher. Then $\mu_{x''}=\lambda_{x''}$ for $x''>x'$
and $\mu_{x'}<\lambda_{x'}$. 
It follows that any addable/removable box in $\mu$ lying in
the rows with numbers $>x'$ is also addable/removable for $\lambda$. As a consequence,
there are no addable/removable $r$-boxes in $\mu$ in rows with numbers $> x'$.
Also there is no addable $r$-box in the $x'$th row of $\mu$ provided $\lambda_{x'}=\mu_{x'}+1$
(otherwise, $\lambda$ has a removable box in that row).
Let $\nu$ be a partition in the same family as $\mu$. From the previous two sentences
it follows that $\nu_{x''}=\mu_{x''}$ for all $x''>x'$ and $\nu_{x'}\leqslant\mu_{x'}$ if $\lambda_{x'}=\mu_{x'}+1$. In the latter case,
$\nu\in \Lambda^a_{>}$.
If $\lambda_{x'}>\mu_{x'}+1$, then $\nu_{x'}\leqslant \mu_{x'}+1<\lambda_{x'}$. So we see that $\nu\in \Lambda^a_{>}$.
The case when (ii) holds for $\mu$ is analyzed in a similar way. Also (iii) itself specifies a union of families.
The remaining part of (S2) is easy to check.

(S3) and (S4) follow directly from the construction.


Let us return to the general situation. Now let $\underline{\Lambda}^a$ be one of the isomorphic posets $\underline{\Lambda}^a_?$. It has a family structure induced
from $\Lambda$. Namely, for the families we take the non-empty intersections $\underline{\Lambda}_b:=\underline{\Lambda}^a_+\cap\Lambda_b$,
they are indexed by a subset $\underline{\mathfrak{A}}^a\subset \mathfrak{A}$. For $b\in \underline{\mathfrak{A}}^a$
we set $\underline{n}_b:=n_b-1$ and define a map $\underline{\sigma}_b:\underline{\Lambda}_b\rightarrow \{+,-\}^{\underline{n}_b}$ by
$\sigma_b(\lambda+)=\underline{\sigma}_b(\lambda)+$. On the other hand, the splitting structure on $\Lambda$ does not seem to define
any splitting structure on $\underline{\Lambda}^a$. This is why we need the next piece of a structure.

A {\it hierarchy} on $\Lambda$ is a collection  $\mathfrak{H}$ of pairs $(\mathfrak{A}', \Lambda(\mathfrak{A}'))$, where   $\mathfrak{A}'\subset\mathfrak{A}$ and $\Lambda(\mathfrak{A}')$ is a poset
with family and splitting structures.   Also we require that the following
axioms hold:
\begin{itemize}
\item[(H0)] If $(\mathfrak{A}', \Lambda(\mathfrak{A}')), (\mathfrak{A}'', \Lambda(\mathfrak{A}''))\in \mathfrak{H}$
and $\mathfrak{A}'=\mathfrak{A''}$, then $\Lambda(\mathfrak{A}')=\Lambda(\mathfrak{A}'')$. Further, either one of
the subsets $\mathfrak{A}',\mathfrak{A}''$ is contained in the other, or $\mathfrak{A}',\mathfrak{A}''$ are disjoint.
\item[(H1)] $(\mathfrak{A},\Lambda)\in \mathfrak{H}$. If $(\mathfrak{A}', \Lambda(\mathfrak{A}'))\in \mathfrak{H}$, then, for any $a\in\mathfrak{A}'$, there is a splitting structure on $\underline{\Lambda(\mathfrak{A}')}^a$ such that
$\left((\underline{\mathfrak{A}}')^a, \underline{\Lambda(\mathfrak{A}')}^a\right)\in \mathfrak{H}$. Moreover,
every element $(\mathfrak{A}'',\Lambda(\mathfrak{A}''))$ is obtained from $(\mathfrak{A},\Lambda(\mathfrak{A}))$
by doing several steps as in  the previous sentence.
\item[(H2)] Any descending chain of embedded subsets in $\mathfrak{H}$ terminates.
\end{itemize}

We remark that this definition is given in such a way that any $\underline{\Lambda}^a$ comes equipped with a
hierarchy structure induced from $\Lambda$.

To produce a hierarchy structure for $\Lambda=\mathcal{P}$ we need to repeatedly define the splitting structures on the emerging posets
of the form $\underline{\Lambda}^a$.  Take the set $\underline{\Lambda}^a_-$
and declare that in all partitions in this set the box in the position $(x,y)$ as above is {\it frozen}. Then we repeat the construction in the previous paragraph and take the topmost unfrozen addable/removable box $(x',y')$. To define the next layer of
the hierarchy we will freeze $(x',y')$ too, and so on. Clearly, (H2) is satisfied. 

\subsection{Examples}\label{SS_hier_ex}
Let us start with a very easy, ``basic'' so to say, example when we only have one family and $\Lambda=\Lambda_a=\{+,-\}^n$.
The sets $\underline{\Lambda}^a_-,\underline{\Lambda}^a_+$
are introduced in a unique possible way. The poset $\underline{\Lambda}$ is just $\{+,-\}^{n-1}$ and a
hierarchy structure is introduced inductively. We  have $\Lambda^a_{>}=\Lambda^a_{<}=\varnothing$.

The example given in the previous subsection can be generalized to multipartitions. Let $\ell$ be a positive integer,  $p=(\kappa, s_0,\ldots,s_{\ell-1})$ be a collection of complex numbers, $\kappa$ being non-integral. Consider the set $\mathcal{P}_\ell$ of $\ell$-multipartitions $\lambda=(\lambda^{(0)},\ldots,\lambda^{(\ell-1)})$. A box in a multipartition $\lambda$
is given by a triple $(x,y,i)$, where $i=0,1,\ldots,\ell-1$ is the number of a multipartition, where the box occurs, and $(x,y)$
are its coordinates: $x$ is the row number, and $y$ is the column number. To a box $\beta=(x,y,i)$ we assign its {\it shifted content} $\cont(\beta)=y-x+s_i$. We say that boxes $\beta,\beta'$ are equivalent and write $\beta\sim \beta'$
if $\cont(\beta)-\cont(\beta')\in \kappa^{-1}\mathbb{Z}$. Also to a box $\beta=(x,y,i)$ we assign the number
$d^p(\beta)=\kappa \ell \cont(\beta)-i$. We write $\beta\leqslant\beta'$ if $\beta\sim \beta'$ and $d^p(\beta)- d^p(\beta')$
is a non-negative integer. For two elements $\lambda,\mu\in \mathcal{P}_{\ell}$ we write $\lambda\leqslant \mu$ if $|\lambda|=|\mu|$
and we can number boxes $b_1,\ldots,b_n$ of $\lambda$ and $b_1',\ldots,b_n'$ of $\mu$ in such a way that $b_i\leqslant b_i'$ for all $i$.



A family structure on $\mathcal{P}$ already appeared in \cite[4.2]{cryst}.
Namely, for a nonzero complex number $z$, we call a box $\beta$ a $z$-box if
$\exp(2\pi \kappa \cont(\beta)\sqrt{-1})=z$. Clearly, the $z$-boxes form an equivalence class
with respect to $\sim$. As before, two multipartitions $\lambda_1,\lambda_2$
lie in the same family (relative to $z$, below we will also use the name ``$z$-family'')
if the multipartitions obtained from $\lambda_1,\lambda_2$ by removing
all removable $z$-boxes are the same.  As we remarked in loc.cit., all addable/removable $z$-boxes have
distinct numbers $d^p(\beta)$ and all of these numbers differ from each other by an integer. To get the map
$\sigma_a^{-1}(\lambda)$ we read addable/removable $z$-boxes $\beta$ of $\lambda$ in the increasing
order with respect to $d^p(\beta)$ and write a $+$ if the box is addable and a $-$ if the box is removable.

Let us define a splitting structure, more or less similar in spirit to that on the usual partitions introduced in the
previous subsection. Pick a family $\Lambda_a$. Let $\beta=(x,y,i)$ be the common smallest addable/removable
$z$-box for the multipartitions in this family. For a multipartition $\lambda$ and a
box $\beta'$, let $|\lambda|^{\beta'}$ denote the number of boxes $\beta''\in \lambda$ with
$\beta''\sim\beta', d^p(\beta'')=d^p(\beta')$ -- these are precisely the boxes lying in the same
diagram and in the same diagonal as $\beta'$. For all boxes $\beta'$ with $\beta'\not\sim \beta$
or with $\beta'<\beta$ the numbers $|\lambda|^{\beta'}$ do not depend on the choice of $\lambda\in \Lambda_a$.

Let $B_1,\ldots,B_t$ be all equivalence classes of boxes that can appear in a multipartition of $|\lambda|$ for some
 $\lambda\in \Lambda_a$ with $B_t$ being the class of $z$-boxes.
Let $\Lambda^a_{>}$ consist of all multipartitions $\mu$ such that there is a box $\beta'$ with the  following three
properties
\begin{itemize}
\item $\beta'\in B_i$ with $i<t$ or  $\beta'<\beta$.
\item $|\mu|^{\beta''}=|\lambda|^{\beta''}$ for all $\beta''$ lying in $B_j$ with $j<i$
or  $\beta''<\beta'$.
\item $|\mu|^{\beta'}<|\lambda|^{\beta'}$.
\end{itemize}
Let $\Lambda^a_=$ consist of all multipartitions $\mu$ such that $|\mu|^{\beta''}=|\lambda|^{\beta''}$
for all boxes $\beta''$ that either lie in $B_i$ with $i<t$ or  $\beta''<\beta$.
Then automatically $\beta$ is an addable/removable box in any $\mu\in \Lambda^a_=$ and we form
the subsets $\underline{\Lambda}^a_+, \underline{\Lambda}^a_-$ accordingly. Finally, let $\Lambda^a_<$
consist of the remaining partitions. The proof that (S0)-(S4) hold is similar (and actually easier) to the one given
in the previous subsection. We would like to remark that $\Lambda^a_==\Lambda_a$.

The hierarchy structure is introduced in a way similar to the above: by freezing addable/\! removable boxes.
The condition (H2) is easily seen to be satisfied.

Let us consider one more example: parabolic highest weights. Namely, we fix $m>0$ and positive integers
$s_1,\ldots,s_{\ell}$ with $\sum_{i=1}^\ell s_i=m$. Let $\Lambda$ consist of all sequences $A=(a_1,\ldots,a_m)$
of integers $a_1,\ldots,a_m$ such that $a_1>a_2>\ldots>a_{s_1}, a_{s_1+1}>\ldots>a_{s_1+s_2},\ldots,
a_{s_1+s_2+\ldots+s_{\ell-1}+1}>\ldots>a_m$. We say that $A<A'$ if there positive roots $\alpha_1,\ldots\alpha_k$
in the root system of type $A_{m-1}$ such that $A'=A+\alpha_1+\ldots+\alpha_k$.

Let us introduce a family structure on $\Lambda$ that essentially has already appeared in \cite{cryst}.
Namely, pick a non-negative integer $N\neq 1$  and a residue $r$ modulo $N$ (if $N=0$, then $r$ is to be thought
as an integer). A family equivalence relation is defined as follows: $A'\sim A''$ if for any index $j=1,\ldots,m$
exactly one of the following holds:
\begin{itemize}
\item[(i)] $a'_j=a''_j$.
\item[(ii)] $a'_j=a''_j+1$ and $a''_j\equiv r\operatorname{mod} N$.
\item[(iii)] $a''_j=a'_j+1$ and $a'_j\equiv r\operatorname{mod} N$.
\end{itemize}
A map $\sigma_a:\{+,-\}^{n_a}\rightarrow \Lambda_a$ is constructed as follows. Let $j_1<j_2<\ldots<j_{n_a}$
be all indexes $j$ such that the family contains elements $A',A''$ such that (ii) or (iii) holds for $j$.
Then for $A\in \lambda_a$ let $t=\sigma_a^{-1}(A)$  be defined as follows: $t_i=+$ (resp, $t_i=-$) if $a_{j_i}\equiv r$
(resp., $a_{j_i}\equiv r+1$) modulo $N$.

A splitting structure is defined similarly to what we had above. Namely, in the notation of the previous paragraph,
let $j=j_{n_a}$. Pick $A=(a_1,\ldots,a_m)\in \Lambda_a$. The values $a_{j+1},\ldots,a_m$ do not depend on
the choice of $A$, while $a_j$ takes one of the two values, say $s,s+1$. Let $\Lambda^a_{>}$ consist of
all $A'\in \Lambda, A'=(a_1',\ldots,a_m')$ such that
\begin{itemize}
\item either there is $j'>j$ such that $a_{j'}>a'_{j'}, a'_{j'+1}=a_{j+1},\ldots, a'_m=a_m$,
\item or $a_{j+1}=a'_{j+1},\ldots, a_m=a'_m$ and $a'_{j}<s$,
\item or $a'_{j-1}=s+1, a'_j=s$, while $a_{j+1}=a'_{j+1},\ldots, a_m=a'_m$
and there is $l$ such that $s_1+\ldots+s_{l-1}+1<j\leqslant s_1+\ldots+s_l$.
\end{itemize}

$\Lambda_=^a, \underline{\Lambda}^a_\pm$ and $\Lambda^a_<$ are introduced similarly to the case of partitions.
Checking (S0)-(S4) and introducing the hierarchy structure is completely analogous to the above.

\subsection{Dual hierarchy structures}\label{SS_h_dual}
In the sequel we will use also the notion of a {\it dual hierarchy structure}: basically looking at the leftmost
element in $\sigma_a^{-1}(\lambda)$ instead of the rightmost one. Let $\Lambda$ be a poset equipped with a
family structure with families $\Lambda_a, a\in \mathfrak{A}$. By a dual splitting structure we mean an assignment
that to each $a\in \mathfrak{A}$ assigns a splitting $\Lambda=\bar{\Lambda}^a_{>}\sqcup \underline{\bar{\Lambda}}^a_+\sqcup
\underline{\bar{\Lambda}}^a_-\sqcup \bar{\Lambda}^a_{<}$. This assignment is subject to the following axioms.
\begin{itemize}
\item[($\bar{\text{S}}$0)] For each $a$, the subsets $\bar{\Lambda}^a_{<}, \bar{\Lambda}^a_{<}\sqcup\underline{\bar{\Lambda}}^a_+, \bar{\Lambda}^a_{<}\sqcup\underline{\bar{\Lambda}}^a_+\sqcup\underline{\bar{\Lambda}}^a_-$ are poset
    ideals.
\item[($\bar{\text{S}}$1)] $n_a=0$ if and only if $\underline{\bar{\Lambda}}^a_+, \underline{\bar{\Lambda}}^a_-=\varnothing$.
\item[($\bar{\text{S}}$2)] For each  $a,b$ the family $\Lambda_b$ is contained either in $\bar{\Lambda}^a_{<}$ or
in $\bar{\Lambda}^a_{>}$ or in $\bar{\Lambda}^a_=:=\underline{\bar{\Lambda}}^a_+\sqcup \underline{\bar{\Lambda}}^a_-$.
Moreover, suppose $\Lambda_b\subset \bar{\Lambda}_=^a$. An element $\lambda\in \Lambda_b$ is contained in
$\underline{\bar{\Lambda}}^a_?$ if and only if the leftmost element of $\sigma_b^{-1}(\lambda)$ is $?$ (for $?=+,-$).
\item[($\bar{\text{S}}$3)] Let $a,b\in \mathfrak{A}$. If $\Lambda_b\subset \bar{\Lambda}_=^a$, then $\underline{\bar{\Lambda}}^b_?=
\underline{\bar{\Lambda}}^a_?$ for $?=+,-$ and $\bar{\Lambda}^a_{<}=\bar{\Lambda}^b_{<}, \bar{\Lambda}^a_{>}=\bar{\Lambda}^b_{>0}$. The inclusion $\Lambda_b\subset \bar{\Lambda}^a_>$ holds if and only if
$\Lambda_a\subset \bar{\Lambda}^b_<$.
\item[($\bar{\text{S}}$4)] Let $a\in A$. Then there is a poset isomorphism $\iota: \underline{\bar{\Lambda}}^a_+\rightarrow \underline{\bar{\Lambda}}^a_-$ that maps $\underline{\bar{\Lambda}}^a_+\cap \Lambda_b$ to $\underline{\bar{\Lambda}}^a_-\cap \Lambda_b$ such that if $\sigma_b^{-1}(\lambda)=+t$ for $t\in \{+,-\}^{n_b-1}$, then $\sigma_b^{-1}(\iota(\lambda))=-t$.
\end{itemize}

A definition of a dual hierarchy structure is now given by a complete analogy with that of a usual hierarchy structure.

As an example, let us introduce a  dual splitting structure on the poset of partitions $\mathcal{P}$. Instead the top-most removable
box in a family now we are going to consider the bottom-most one.
Namely, pick a family $\Lambda_a$ and let $(x,y)$ be the bottom-most
addable/removable box for this family with content, say, $m$.
We remark that $\lambda_x$ can have one of the two values, say $s,s+1$.
Also for $\lambda\in\Lambda_a$ the numbers $|\lambda|^k$ do not depend
on the choice of $\lambda$ as long as $k>m$.

Let $\bar{\Lambda}^a_{>}$ consist of all partitions $\mu$ such that
\begin{itemize}
\item[(i)] either there is $k>m$ with $|\lambda|^l=|\mu|^l$ for all $l>k$ and $|\lambda|^k<|\mu|^k$,
\item[(ii)] or $|\lambda|^l=|\mu|^l$ for all $l>m$ and $\mu_x>s+1$,
\item[(iii)] or $|\lambda|^l=|\mu|^l$ for all $l>m, \mu_x=\mu_{x+1}=s+1$.
\end{itemize}
Let $\bar{\Lambda}^a_=$ consist of all partitions with $|\lambda|^l=|\mu|^l$ for all $l>m$,
while $\mu_x=s,s+1$, $\mu_{x+1}\leqslant s$. We partition $\bar{\Lambda}^a_=$ into the union $\underline{\bar{\Lambda}}^a_+\sqcup
\underline{\bar{\Lambda}}^a_-$ according to the value of $\mu_x$: a partition $\mu$ is in
$\underline{\bar{\Lambda}}^a_+$ if and only if $\mu_x=s$. Finally, let $\bar{\Lambda}^a_<$ consist of the remaining
partitions.

In fact, one can formally obtain a dual hierarchy structure from a usual one. Namely, define, first, a dual family
structure on $\Lambda$. The decomposition $\Lambda=\bigsqcup_a \Lambda_a$ is the same as before. However, the map
$\sigma_a$ gets modified: we consider a new map $\bar{\sigma}_a: \{+,-\}^{n_a}\rightarrow \Lambda_a$ defined by
$\bar{\sigma}_a(t):=\sigma_a(\bar{t})$, where for $t=(t_1,\ldots,t_{n_a})$ we set
$\bar{t}:=(\bar{t}_{n_a},\bar{t}_{n_a-1},\ldots,\bar{t}_1)$ with $\bar{t}_i$ defined as the element
different from $t_i$. The splitting structure is the same but it now satisfies ($\bar{\text{S}}$0)-($\bar{\text{S}}$4)
and so is a dual splitting structure. Also the hierarchy structure stays the same but becomes a dual hierarchy
structure.

We would like to point out that the dual structure constructed on $\mathcal{P}$ in this way is \underline{different}
from what we have constructed just above, even the family structures are different. 
the topmost box. However, the two structures are isomorphic via the transposition
of Young diagrams (with reversing the order).

\section{Highest weight $\sl_2$-categorifications}\label{S_hw_cat}
\subsection{Reminder on highest weight categories}\label{SS_hw_reminder}
The goal of this subsection is to recall some basic facts about highest weight categories. Let $\C$ be a field.
Let $\Cat$ denote a  $\C$-linear  finite length abelian  category. Let $\Lambda$ be an indexing set
for the irreducible objects in $\Cat$, we write $L(\lambda)$ for the irreducible corresponding to $\lambda$.
Recall that by a highest weight structure on $\Cat$ one means a poset structure on $\Lambda$ together
with a collection of {\it standard} objects $\Delta(\lambda)$, one for each $\lambda\in \Lambda$, such that
the following axioms hold:
\begin{itemize}
\item[(HW1)] We have $\Hom(\Delta(\lambda),\Delta(\mu))=0$ if $\lambda>\mu$ and $\End(\Delta(\lambda))=\C$.
\item[(HW2)] For any $\lambda\in \Lambda$, there is a projective cover $P(\lambda)$ of $L(\lambda)$. It admits an epimorphism $P(\lambda)\twoheadrightarrow\Delta(\lambda)$ whose kernel is filtered with successive quotients $\Delta(\mu), \mu>\lambda$.
\end{itemize}

Let us recall that $\Cat^{opp}$ is also a highest weight category with respect to the poset $\Lambda$. Its standard
object (i.e., {\it costandard} objects in $\Cat$) $\nabla(\lambda)$ have the property $\dim\Ext^i(\Delta(\lambda),\nabla(\mu))=\delta_{i,0}\delta_{\lambda,\mu}$. Moreover, an object $M$ in $\Cat$
is standardly filtered if and only if $\Ext^1(M,\nabla(\lambda))=0$ for all $\lambda$. There is a similar characterization
of costandardly filtered objects. We have the BGG reciprocity:
the multiplicity of $\Delta(\lambda)$ in $P(\mu)$ equals to the multiplicity of $L(\mu)$ in $\nabla(\lambda)$.

\begin{Rem}\label{Rem:ordering}
We remark that if $\Cat$ is a highest weight category with respect to the orders $\leqslant_1,\leqslant_2$, then
it is so with respect to their intersection (as of subsets of $\Lambda\times\Lambda$). So there is the coarsest possible
ordering. It is generated by the relation $\leqslant$ given by $\lambda\leqslant \mu$ if $\Hom(\Delta(\lambda),\Delta(\mu))\neq 0$
or $\Delta(\mu)$ appears in $P(\lambda)$.
\end{Rem}

An object which is both
standardly filtered and costandardly filtered is called {\it tilting}. The indecomposable tiltings are indexed
by $\Lambda$, the tilting $T(\lambda)$ corresponding to $\lambda$ has the property that there is an inclusion
$\Delta(\lambda)\hookrightarrow T(\lambda)$ such that the quotient admits a filtration with successive quotients
of the form $\Delta(\mu),\mu<\lambda$.

Let us proceed to highest weight sub/quotient categories.

Let $\Lambda'$ be a poset ideal in $\Lambda$. Consider the Serre subcategory $\Cat'$ of
$\Cat$ spanned by $L(\lambda), \lambda\in \Lambda'$. We have the inclusion functor $\iota: \Cat'\rightarrow \Cat$.
This functor  has the left adjoint $\iota^!:\Cat\rightarrow \Cat'$ that takes the maximal quotient lying in $\Cat'$.

\begin{Lem}\label{Lem:hw_subcat}
\begin{itemize}
\item The category $\Cat'$ is a highest weight category with respect to the poset $\Lambda'$, the  standard objects
are $\Delta(\lambda), \lambda\in \Lambda'$, while the costandard objects are $\nabla(\lambda)$.
\item The functor $\iota^!$ is exact on $\Cat^\Delta$.
\item We have $\operatorname{Ext}_{\Cat'}^i(M,M')=\Ext^i_{\Cat}(M,M')$ for all $M,M'\in \Cat'$.
\end{itemize}
\end{Lem}
\begin{proof}
(1) and (2) are easy and standard. To show (3) we notice that to compute Ext's from $M$ to $M'$
one can replace $M$ with a standardly filtered complex and $M'$ with a costandardly filtered complex.
Then Ext's are computed via the total complex of  the corresponding double complex of Hom's that is independent
on whether it is taken in $\Cat$ or in $\Cat'$.
\end{proof}

Now consider the complement $\Lambda''=\Lambda\setminus \Lambda'$, a poset coideal. Set $\Cat'':=\Cat/\Cat'$
and let $\pi: \Cat\rightarrow \Cat''$ denote the quotient functor, we can identify $\Cat''$ with the category
of finitely generated modules over $\operatorname{End}(\bigoplus_{\lambda\in \Lambda''} P(\lambda))^{opp}$,
then $\pi=\Hom_{\Cat}(\bigoplus_{\lambda\in \Lambda''} P(\lambda),\bullet)$. The functor $\pi$ has left adjoint
$\pi^!$ given by taking the tensor product with $\bigoplus_{\lambda\in \Lambda''}P(\lambda)$ over
$\operatorname{End}(\bigoplus_{\lambda\in \Lambda''}P(\lambda))$.

\begin{Lem}\label{Lem:hw_quot}
In the above notation, the following holds.
\begin{itemize}
\item[(i)] The category $\Cat''$ is highest weight with respect to the poset $\Lambda''$, the standard
objects are $\pi(\Delta(\lambda)), \lambda\in \Lambda''$.
\item[(ii)] The functor $\pi^!$ defines an equivalence of $(\Cat'')^{\Delta}$ with the full subcategory
of $\Cat$ consisting of all objects that admit a filtration with successive quotients of the form
$\Delta(\lambda), \lambda\in \Lambda''$. A quasi-inverse equivalence is given by $\pi$.
\item[(iii)] For $M\in \Cat$, we have the following functorial exact sequence
$\pi^!\pi M\rightarrow M\rightarrow \iota\iota^! M\rightarrow 0$. Further, $\pi^!\pi, \iota\iota^!$
are exact endofunctors of $\Cat^\Delta$ and $\pi^!\pi M\hookrightarrow M$ for $M\in \Cat^\Delta$.
\end{itemize}
\end{Lem}

\subsection{Definition of a highest weight $\sl_2$-categorification}\label{SS_hw_def}
Let $\Lambda$ be a poset equipped with a hierarchy (and so, in particular, with family and splitting structures).
Let $\Cat$ be a split finite length $\C$-linear category
that is equipped with a categorical $\sl_2$-action, i.e., with biadjoint functors $E,F$ together with additional
structures, see \cite{CR}.
Also assume $\Cat$ is a highest weight category, whose standard objects $\Delta(\lambda)$ are indexed by the elements
of $\Lambda$.

One of the structures that enter the definition of an $\sl_2$-categorification is a decomposition
$\Cat=\bigoplus_{w\in \mathbb{Z}}\Cat_w$ according to the ``weight'' for the $\sl_2$-action. Conditions on
that decomposition are that $E\Cat_w\subset \Cat_{w+2}$, $F\Cat_{w}\subset \Cat_{w-2}$. Another part is a
pair of functor endomorphisms $X\in \operatorname{End}(E), T\in \operatorname{End}(E^2)$.
The condition on them is that there are $a,q\in \C$ with $a\neq 0$ if $q\neq 1$ such that
\begin{itemize}
\item $X-a$ is nilpotent.
\item The induced transformations $X_i=\operatorname{id}^{i-1}X\operatorname{id}^{n-i}, i=1,\ldots,n$ and
$T_j:=\operatorname{id}^{j-1}T\operatorname{id}^{n-j-1},$ $ j=1,\ldots,n-1,$ of $E^n$ satisfy the defining relations
of the affine Hecke algebra with parameter $q$ (the degenerate affine Hecke algebra when $q=1$).
\end{itemize}

One of the corollaries of the presence of $X,T$ is that the functors $E^n,F^n$ can be decomposed as $\C^{n!}\otimes_{\C} E^{(n)}, \C^{n!}\otimes_{\C} F^{(n)}$ for appropriate endofunctors $E^{(n)},F^{(n)}$ of $\Cat$.

We say that $\Cat$ is a highest weight categorification with respect to the hierarchy structure on $\Lambda$ if
\begin{itemize}
\item[(i)] The functors $E,F$ preserve $\Cat^\Delta$.
\item[(ii)] The map $[\Cat]\xrightarrow{\sim}\Q^{\Lambda}$ defined by $[\Delta(\lambda)]\mapsto \lambda$
(where in the right hand side we write $\lambda$ for the basis vector corresponding to $\lambda$)
is an isomorphism of $\sl_2$-modules, where the
$\sl_2$-action on $\Q^{\Lambda}$ was defined in the beginning of Subsection \ref{SS_hier_def}.
\end{itemize}
In other words, we require that for each $a\in \mathfrak{A}, t\in \{+,-\}^{n_a}$,
\begin{itemize}
\item there is a filtration
$0=\mathcal{F}_0\subset \mathcal{F}_1\subset\ldots\subset \mathcal{F}_r=E\Delta(\sigma_a(t))$ such that
$\mathcal{F}_i/\mathcal{F}_{i-1}=\Delta(\sigma_a(t^i))$, where $t^i\in \{+,-\}^{n_a}$ is determined
as follows. Let $j_1<j_2<\ldots<j_r$ be all indexes $j$ such that $t_{j}=+$, then $t^i$ is obtained from
$t$ by switching $t_{j_i}$ to a $-$.
\item there is a filtration $0=\mathcal{F}'_0\subset \mathcal{F}'_1\subset\ldots\subset \mathcal{F}'_s=F\Delta(\sigma_a(t))$ such that
$\mathcal{F}'_i/\mathcal{F}'_{i-1}=\Delta(\sigma_a(\bar{t}^i))$, where $\bar{t}^i\in \{+,-\}^{n_a}$ is determined
as follows. Let $j'_1>j'_2>\ldots>j'_s$ be all indexes $j$ such that $t_{j}=-$, then $\bar{t}^i$ is obtained from
$t$ by switching $t_{j'_i}$ to a $+$.
\end{itemize}
We remark that filtration subquotients can be placed in this order because $t^i> t^{i'}$  in $\{+,-\}^{n_a}$ whenever $i<i'$
and so $\sigma_a(t^i)\not< \sigma_a(t^{i'})$ in $\Lambda$ (and similarly in the other case).  The filtrations on  $E\Delta(\lambda), F\Delta(\lambda)$ with these subquotients (in this order) are unique, they will be called {\it standard}.

Also we would like to point out   that this definition is different from \cite{cryst}. We still require the conditions (HWC0),(HWC2) from there but the remaining three conditions that were dealing with the poset structure  are now replaced by a (morally, much stronger) condition of having a hierarchy structure on the poset $\Lambda$. 

Similarly we can give a ``dual'' definition of a highest
weight $\sl_2$-categorification with respect to a dual hierarchy structure on $\Lambda$.

\begin{Rem}\label{Rem:opp_cat}
It is easy to
see that $\Cat^{opp}$ is a highest weight $\sl_2$-categorification with respect to the (dual)
hierarchy structure on $\Lambda$ provided $\Cat$ is. (i) follows  from the biadjointness of $E,F$ and the standard fact
that $M\in \Cat^\nabla$ if and only if $\Ext^1(M,N)=0$ for any $N\in \Cat^\Delta$. (ii) follows from the observation
that the multiplicity of $\nabla(\lambda')$ in $E\nabla(\lambda)$ equals $\Hom(\Delta(\lambda'), E\nabla(\lambda))=
\Hom(F\Delta(\lambda'), \nabla(\lambda))$ that coincides with the multiplicity of $\Delta(\lambda)$ in
$F\Delta(\lambda')$.
\end{Rem}


We will also impose a technical assumption on $\Cat$. Let us remark that if $\Cat'$ is a highest weight categorification
defined over a subfield $\C'\subset\C$, then we have a natural highest weight categorification on
$\Cat:=\C\otimes_{\C'}\Cat'$. In this case we say that $\Cat$ is defined over $\C'$.
We will suppose that at least one of the following holds:
\begin{itemize}
\item[(iii$^1$)] All blocks of $\Cat$ have a finite number of simples and $\C$ is infinite.
\item[(iii$^2$)] The field $\C$ is uncountable.
\item[(iii$^3$)] $\Cat$ is defined over a subfield of infinite codimension in $\C$.
\end{itemize}

Let us finish this subsection by explaining a naive duality for highest weight $\sl_2$-\!\! categorifications.
This duality will swap $E$ and $F$ and turn a hierarchy structure on $\Lambda$ into a dual hierarchy structure.
In more detail, consider a category $\bar{\Cat}$ that coincides with $\Cat$ as a highest weight category.
Set $\bar{E}:=F, \bar{F}:=E, \bar{\Cat}_w:=\Cat_{-w}$. Equip $\Lambda$ with the dual hierarchy structure
explained in Subsection \ref{SS_h_dual}. Clearly (i) and (ii) still hold, while neither of (iii$^1$)-(iii$^3$)
depended on the categorification structure at all. So we see that $\bar{\Cat}$ becomes a highest
weight categorification with respect to the dual hierarchy structure on $\Lambda$.

\subsection{Examples}\label{SS_hw_ex}
In this subsection we will consider some examples of categorifications that have already appeared
in \cite{cryst} and whose posets were equipped with hierarchy structures in Subsection \ref{SS_hier_ex}.
We will see that they are actually highest weight categorifications with respect to hierarchy structures.

First, consider the case when $\Lambda$ is the poset of parabolic highest weights, see Subsection \ref{SS_hier_ex}.
Suppose that $\C$ is an algebraically closed field of characteristic $0$  and the integer $N$ is $0$. Then $\Lambda$ is a poset
of (the integral block of) the parabolic category $\mathcal{O}$ for the Lie algebra $\gl_m$ and its parabolic subgroup with blocks of sizes
$s_1,\ldots,s_{\ell}$. It follows from  the construction of an $\sl_2$-categorification
on the parabolic category $\mathcal{O}$, see \cite{CR} or \cite{BK_shift}, that $\mathcal{O}$ satisfies
(i) and (ii). Also $\mathcal{O}$ satisfies  (iii$^1$) and (iii$^3$).
(iii$^1$) is a classical result. And (iii$^3$) follows from the observation that $\mathcal{O}$ is defined
over $\mathbb{Q}$.


We can get a version with positive $N$, for simplicity, we assume that $N$ is odd.
For this we need to consider the parabolic category $\mathcal{O}$
for the Lusztig form of a quantum group $U_\epsilon(\mathfrak{gl}_n)$, where $\epsilon$ is a primitive $N$th root of
$1$. This is a highest weight category, see [Section 3, Theorem 4.1]\cite{AndMaz}, where the case of the full category $\mathcal{O}$
was considered, the same arguments work in the parabolic case.  Let us define categorical $\sl_2$-actions on $\mathcal{O}$. 
Let $V$ denote the tautological representation of $U_{\epsilon}(\mathfrak{gl}_n)$.
For a $U_{\epsilon}(\mathfrak{gl}_n)$-module $M$ from $\mathcal{O}$, we set $X_M:=R^{21}R: V\otimes M\rightarrow V\otimes M$,
where $R$ is the $R$-matrix. Also set $T_M:=(\sigma\circ R)\otimes \operatorname{id}:V\otimes V\otimes M\rightarrow
V\otimes V\otimes M$, where $\sigma$ is the transposition. The data of $E=V\otimes \bullet, F=V^*\otimes \bullet, X,T$
define the structure of a categorical $\hat{\sl}_N$-action on $\mathcal{O}$, that gives rise to the $N$ categorical
actions of $\sl_2$. 
It is not difficult to see that (i) and (ii) hold. The category does not satisfy (iii$^1$)  but satisfies
(iii$^3$) -- it is defined over $\mathbb{Q}[\epsilon]$.

Another way to get a version with positive $N=p$ is when we consider the category $\Cat$ of rational
representations of $\GL_m(\C)$ with $\C$ being an algebraically closed field of characteristic $p$
and $\ell=1$. See \cite{BK_functors} and \cite{CR} for the description of the categorification.
The categorification satisfies (i) and (ii) as well as (iii$^3$): it is actually defined over
$\mathbb{F}_p$.

Let us explain why the latter holds. Let $U_{\mathbb{F}_p}, U_{\C}$ be the hyperalgebras
of $\gl_n({\mathbb{F}_p}),\gl_n(\C)$ so that $U_{\C}=\C\otimes_{\mathbb{F}_p}U_{\mathbb{F}_p}$
(as associative algebras and also as Hopf algebras). Then a rational representation of $\GL_n(\C)$ is the same
as a finite dimensional $U_{\C}$-module, where the characters of the hyperalgebra of
the Cartan in $\gl_n(\C)$ are integral. The category of finite dimensional $U_{\mathbb{F}_p}$-modules
(with the same integrality condition) is  highest weight. Indeed, the Weyl modules  are defined over $\mathbb{F}_p$.
So (HW1) holds for $U_{\mathbb{F}_p}$. Further, one can construct projectives (over $U_{\C}$)
as extensions of standards as explained, for example, in \cite[Proposition 4.13]{rouqqsch}. This procedure
implies that the projectives are defined over $U_{\mathbb{F}_p}$ and (HW2) is satisfied over $U_{\mathbb{F}_p}$.
Further, the categorical $\sl_2$-action on the category for $U_{\C}$ is defined over $\mathbb{F}_p$ (the operator $X$ is a tensor Casimir and
all its eigenvalues belong to $\mathbb{F}_p$) and so descends the category for $U_{\mathbb{F}_p}$. It is also
clear that the latter action satisfies (i) and (ii).

We can get a version of this construction for $\ell>1$ if we consider parabolic categories $\mathcal{O}$
for $U_{\C}$ (all weights are supposed to be integral and all weight subspaces have to be finite dimensional).

Let us proceed to the case when $\Lambda$ is the poset of multipartitions. Here we can consider
the category $\Cat$ that is the direct sum of categories $\mathcal{O}$ over cyclotomic Cherednik algebras
(all with the same parameters). The categorification itself was defined in \cite{Shan}, while (i) and (ii) were checked in \cite{cryst}.
The claim that one can choose a highest weight order on $\Lambda$ as specified in Subsection \ref{SS_hier_ex}
was essentially established by Griffeth, \cite{Griffeth}, compare with the proof of  \cite[Theorem 1.2]{DG}.
The category $\Cat$ satisfies all three conditions (iii$^1$)-(iii$^3$).

Of special interest for us in this paper will be so called  basic categorifications.
A highest weight $\sl_2$-categorification with poset $\{+,-\}^n$ is said to be {\it basic}.
In characteristic 0, an example is provided by the sum of blocks in the BGG category $\OCat$ for $\gl_n$, as explained
in the introduction. This basic categorification will be called {\it standard} in the sequel.

To the best of our knowledge,  basic categorifications in the positive characteristic $p$ have not appeared explicitly in the literature.
Filtration results from the next section allow to prove  that such categories appear as subquotients of, say,
the category of rational representations of $\operatorname{GL}_n(\C)$.

\section{Categorical splitting and family filtration}\label{S_cat_spl}
In this section we will prove two different results. First, we will produce a reduction procedure
that, from a highest weight $\sl_2$-categorification $\Cat$ (with respect to a hierarchy structure on a poset
$\Lambda$) and a family $\Lambda_a$,
will produce isomorphic  categorification structures on the highest weight subquotients of $\Cat$
associated to $\underline{\Lambda}_+^a,\underline{\Lambda}_-^a$. These will be highest weight
categorifications with respect to the hierarchy structures on $\underline{\Lambda}^a$.

From this construction we will deduce that each family is an interval in $\Lambda$ if we consider
$\Lambda$ as a poset with respect to the coarsest possible ordering compatible with the highest
weight structure on $\Lambda$. Recall that ``$\Lambda_a$ is an interval'' means:  if $\lambda_1,\lambda_2\in \Lambda_a$ and $\mu$ lies between $\lambda_1,\lambda_2$ in that ordering, then $\mu\in \Lambda_a$.   In particular,  there is a filtration on $\Cat$ (compatible with both
categorification and highest weight structures, as explained in the introduction) whose successive quotients
are basic $\sl_2$-categorifications. We remark that in all examples it is possible to see this just from
the combinatorics (this is literally so for our poset structure on the multipartitions;
in other cases we need to take coarser orderings).

\subsection{Categorical splitting: a setting}
Recall that $\Lambda$ stands for a poset equipped with a hierarchy structure.
Fix a family $\Lambda_a$.
Recall that we have decomposed the poset $\Lambda$ into the union of intervals (we suppress the superscript ``$a$'')
\begin{equation}\label{eq:comb_splitting}\Lambda=\Lambda_{<}\sqcup \underline{\Lambda}_-\sqcup \underline{\Lambda}_+\sqcup \Lambda_{>},\end{equation}
where the terms are written in a non-decreasing order and the posets $\underline{\Lambda}_?$ are isomorphic
to a single poset $\underline{\Lambda}$. We set $\Lambda_=:=\underline{\Lambda}_-\sqcup \underline{\Lambda}_+,
\Lambda_{\leqslant}:=\Lambda_{<}\sqcup \Lambda_=$. Consider the Serre subcategories $\Cat_<, \Cat_{\leqslant}$
spanned by the simples $L(\lambda)$ with $\lambda\in \Lambda_{<},\Lambda_{\leqslant}$. These are highest
weight subcategories. Since the class of each simple in $\Cat$ is a weight vector, we see that $\Cat_{<},\Cat_{\leqslant},
\Cat_=$ inherit the weight decomposition from $\Cat$. Moreover, the properties of the decomposition (\ref{eq:comb_splitting})
imply that these subcategories are closed with respect to $E,F$. We claim that $\Cat_<,\Cat_{\leqslant}$
as well as the quotient $\Cat_=:=\Cat_{\leqslant}/\Cat_<$ inherit  categorical $\sl_2$-actions from
$\Cat$. Indeed, let  $\Cat'\subset \Cat$ be a Serre subcategory and let $\mathcal{E},\mathcal{F}$ be
functors preserving the subcategory, then we have maps $\operatorname{Hom}_{\Cat}(\mathcal{E},\mathcal{F})
\rightarrow \operatorname{Hom}_{\Cat'}(\mathcal{E},\mathcal{F}), \operatorname{Hom}_{\Cat/\Cat'}(\mathcal{E},\mathcal{F})$
that are compatible with compositions. This implies that the functors $E,F$ together with the transformations
$X,T$ induced from $\Cat$ define categorical $\sl_2$-actions on $\Cat_<,\Cat_{\leqslant}, \Cat_=$.

Besides, $\Cat_=$ is a highest weight category, see Subsection \ref{SS_hw_reminder}.
It is straightforward to see that $(\Cat_=,\Lambda_=)$ is a highest weight $\sl_2$-categorification
with respect to the hierarchy structure on $\Lambda_=$ restricted from $\Lambda$. Below we will therefore assume that $\Lambda_==\Lambda$
and so $\Cat_==\Cat$.

So the category $\Cat$ has a Serre subcategory $\Cat_-$ corresponding to $\underline{\Lambda}_-$.
Form the quotient $\Cat_+=\Cat/\Cat_-$. These are highest weight categories with posets $\underline{\Lambda}_-,
\underline{\Lambda}_+$. However, let us notice that $\Cat_-$ is not a sub-categorification. Indeed, $E$
preserves $\Cat_-$ but $F$ does not, roughly speaking, $F$ can switch the last $-$ into a $+$.

Our goal in this section will be to introduce categorical $\sl_2$-actions on $\Cat_+,\Cat_-$ that turn
them into highest weight categorifications with respect to the hierarchy structure on $\underline{\Lambda}$
and, moreover, to show that $\Cat_+,\Cat_-$ are isomorphic as highest weight categorifications.

We proceed by defining certain functors that will be shown to be required equivalences.

\subsection{Functors $\Fun,\Eun$}\label{SS_Eun}
Let $\iota,\pi$ denote the embedding $\Cat_-\hookrightarrow \Cat$ and the quotient functor $\Cat\twoheadrightarrow\Cat_+$, respectively.
For $\lambda\in \underline{\Lambda}$ we define elements $\lambda?\in \underline{\Lambda}_?$, where $?=+,-$,
using the natural bijections $\underline{\Lambda}\xrightarrow{\sim}\underline{\Lambda}_?$.

Define a functor $\Fun:\Cat_-\rightarrow \Cat_+$ by $\Fun:=\pi\circ F\circ \iota$. Let us list some simple properties
of $\Fun$.

\begin{Lem}\label{Lem:Fun}
The following assertions hold:
\begin{enumerate}
\item The functor $\Fun$ is exact.
\item $\Fun(\Delta_-(\lambda))=\Delta_+(\lambda), \Fun(\nabla_-(\lambda))=\nabla_+(\lambda)$. Here
for $\lambda\in \underline{\Lambda}$ by $\Delta_?(\lambda),\nabla_?(\lambda)$ we denote the standard
and costandard objects in $\Cat_?$ corresponding to $\lambda$ (with $?=\pm$).
\item Under the standard identifications of the rational $K_0$-groups $[\Cat_{?}]$ with
$\Q^{\underline{\Lambda}}$, the functor $\Fun$ induces the identity map on the $K_0$-groups.
\item $\Fun(L_-(\lambda))=L_+(\lambda)$ for any $\lambda\in \underline{\Lambda}$.
\end{enumerate}
\end{Lem}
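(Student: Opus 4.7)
The plan is to prove the four claims in sequence, feeding each into the next.

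For (1), since $\iota$ is the inclusion of a Serre subcategory and $\pi$ is a Serre quotient, both are exact; and $F$ is exact because it is biadjoint to $E$, so $\Fun=\pi\circ F\circ\iota$ is exact. For (2) on standards, fix $\lambda\in\underline{\Lambda}$ and let $\Lambda_b$ be the family containing $\lambda-$. By the construction of the splitting, $\sigma_b^{-1}(\lambda-)$ ends in $-$, and $\sigma_b^{-1}(\lambda+)$ is obtained from it by flipping that last entry to $+$. Axiom (ii) gives $F\Delta(\lambda-)$ a standard filtration whose subquotients $\Delta(\bar{\lambda}^{i})$ are indexed by the positions $j_1<\cdots<j_l=n_b$ of $-$ in $\sigma_b^{-1}(\lambda-)$, each obtained by flipping the $j_i$-th entry. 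Only the flip at position $n_b$ produces $\lambda+$; every other flip preserves the terminal $-$, so the remaining subquotients are standards indexed by elements of $\underline{\Lambda}_-$. The quotient functor $\pi$ annihilates these and leaves $\pi\Delta(\lambda+)=\Delta_+(\lambda)$, so $\Fun(\Delta_-(\lambda))=\Delta_+(\lambda)$.

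For the costandard part of (2), I argue via adjunction. Axiom (i) together with the $\Ext$-characterization of $\Cat^\nabla$ and the biadjointness of $(E,F)$ imply $F\nabla(\lambda-)\in\Cat^\nabla$, and the standard Brauer--Humphreys formula combined with the adjunction gives
\[
(F\nabla(\lambda-):\nabla(\mu))=\dim\Hom(\Delta(\mu),F\nabla(\lambda-))=\dim\Hom(E\Delta(\mu),\nabla(\lambda-))=(E\Delta(\mu):\Delta(\lambda-)).
\]
By axiom (i) the right-hand side equals $1$ exactly when $\mu=\bar{\lambda}^i$ for some $i$, so $F\nabla(\lambda-)$ admits a costandard filtration with the same list of subquotients as the standard filtration of $F\Delta(\lambda-)$ above; applying $\pi$ then yields $\Fun(\nabla_-(\lambda))=\nabla_+(\lambda)$. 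Part (3) is now immediate: under the identifications $[\Cat_?]\cong\Q^{\underline{\Lambda}}$ with $[\Delta_?(\lambda)]\mapsto e_\lambda$, part (2) says $\Fun_*$ sends $e_\lambda$ to $e_\lambda$ for every $\lambda$.

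For (4), apply the exact functor $\Fun$ to the canonical maps $\Delta_-(\lambda)\twoheadrightarrow L_-(\lambda)$ and $L_-(\lambda)\hookrightarrow\nabla_-(\lambda)$ and use (2):
\[
\Delta_+(\lambda)\twoheadrightarrow \Fun(L_-(\lambda))\hookrightarrow \nabla_+(\lambda).
\]
By (3) the class $[\Fun(L_-(\lambda))]\in[\Cat_+]$ corresponds to $[L_-(\lambda)]\in[\Cat_-]$ under the identity map on $\Q^{\underline{\Lambda}}$, and the coefficient of $e_\lambda$ in the $[\Delta_-]$-expansion of $[L_-(\lambda)]$ is $1$; in particular $\Fun(L_-(\lambda))\neq 0$. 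The composite $\Delta_+(\lambda)\to\nabla_+(\lambda)$ is therefore a nonzero element of $\Hom(\Delta_+(\lambda),\nabla_+(\lambda))\cong\C$, whose image in $\nabla_+(\lambda)$ is $L_+(\lambda)$ by the standard factorization $\Delta\twoheadrightarrow L\hookrightarrow\nabla$. Since $\Fun(L_-(\lambda))\hookrightarrow\nabla_+(\lambda)$ is injective, this image equals $\Fun(L_-(\lambda))$, giving $\Fun(L_-(\lambda))\cong L_+(\lambda)$.

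The main obstacle is the costandard half of (2): the axioms only directly control the action of $F$ on standards, so to pin down the subquotients of the costandard filtration of $F\nabla(\lambda-)$ one must synthesize the information from biadjointness, axiom (i), and the Brauer--Humphreys formula relating objects of $\Cat^\Delta$ and $\Cat^\nabla$. Once (2) is in hand, (3) is bookkeeping and (4) is a clean highest weight squeeze.
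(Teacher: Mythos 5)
Your proposal is correct, and for parts (1), (3), (4) and the standard half of (2) it follows the paper's argument essentially verbatim: exactness of the three factors, the observation that in the standard filtration of $F\Delta(\lambda-)$ only the subquotient $\Delta(\lambda+)$ survives $\pi$, the identity on $K$-groups from (2), and the factorization $\Delta\twoheadrightarrow L\hookrightarrow\nabla$ combined with $\Fun(L_-(\lambda))\neq 0$. The one place you diverge is the costandard half of (2): the paper disposes of it in one line by passing to $\Cat^{opp}$, which was already noted in Subsection \ref{SS_hw_def} to be a highest weight $\sl_2$-categorification for the dual hierarchy structure, so that the standard-filtration argument applies there directly. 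You instead prove it in $\Cat$ itself, first showing $F\nabla(\lambda-)\in\Cat^\nabla$ via the vanishing of $\Ext^{1}(E\Delta(\mu),\nabla(\lambda-))$ and then computing the costandard multiplicities by $\dim\Hom(\Delta(\mu),F\nabla(\lambda-))=\dim\Hom(E\Delta(\mu),\nabla(\lambda-))=(E\Delta(\mu):\Delta(\lambda-))$. Both routes are valid; yours is more self-contained (it never invokes the opposite categorification), at the cost of having to justify that the adjunction passes to $\Ext^1$ (which it does, since $E$ is exact with exact adjoint), while the paper's is shorter because the $\Cat^{opp}$ machinery was set up in advance and is reused elsewhere.
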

\begin{proof}
(1) follows because $\Fun$ is the composition of three exact functors. To prove $\Fun(\Delta_-(\lambda))=\Delta_+(\lambda)$
recall the standard filtration on $F\Delta(\lambda-)$ mentioned in Subsection \ref{SS_hw_def}.
Let us notice that in the standard filtration of $F\Delta(\lambda-)$ the only successive quotient that does not lie in $\Cat_-$
is the subobject $\Delta(\lambda+)$. Since $\pi(\Delta(\lambda+))=\Delta_+(\lambda)$, we are done. Applying the same
argument to $\Cat^{opp}$, we prove that $\Fun(\nabla_-(\lambda))=\nabla_+(\lambda)$. This completes the proof of (2).
(3) easily follows. To prove (4) we notice that $\Fun(L_-(\lambda))\neq 0$ because of (3). Recall
that $L_-(\lambda)$ is the image of any nonzero morphism $\Delta_-(\lambda)\rightarrow \nabla_-(\lambda)$. Since $\Fun$ is exact,
it maps $L_-(\lambda)$ to the image of a morphism $\Delta_+(\lambda)=\Fun(\Delta_-(\lambda))\rightarrow \Fun(\nabla_-(\lambda))=\nabla_+(\lambda)$.
Since $\Fun(L_-(\lambda))\neq 0$, we see that $\Fun(L_-(\lambda))=L_+(\lambda)$.
\end{proof}

Now let us define a functor $\Eun:\Cat_+\rightarrow \Cat_-$.
Let $\pi^!,\iota^!$ denote the left adjoints of the functors $\pi,\iota$, see Subsection \ref{SS_hw_reminder}.
We set $\Eun:=\iota^!\circ E\circ \pi^!$. The following lemma describes some basic properties of $\Eun$.

\begin{Lem}\label{Lem:Eun}
The following assertions hold:
\begin{enumerate}
\item The functor $\Eun$ is left adjoint to $\Fun$.
\item Moreover, $\Eun$ maps the indecomposable
projective $P_+(\lambda)$ to the indecomposable projective $P_-(\lambda)$.
\item The natural morphism $\Eun\Fun N\twoheadrightarrow N$ is surjective for
any $N\in \Cat_-$.
\end{enumerate}
\end{Lem}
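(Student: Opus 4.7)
The plan is to derive all three parts formally from the construction $\Eun = \iota^! \circ E \circ \pi^!$, the biadjointness of $E$ and $F$, and the properties of $\Fun$ already recorded in Lemma \ref{Lem:Fun}. For (1), I would simply chain the adjunctions $\pi^! \dashv \pi$, $E \dashv F$ (half of biadjointness), and $\iota^! \dashv \iota$ to get
\[
\Hom_{\Cat_-}(\Eun M, N) \cong \Hom_{\Cat}(E \pi^! M, \iota N) \cong \Hom_{\Cat}(\pi^! M, F \iota N) \cong \Hom_{\Cat_+}(M, \Fun N),
\]
which is exactly $\Eun \dashv \Fun$.

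For (2), since $\Fun$ is exact by Lemma \ref{Lem:Fun}(1), its left adjoint $\Eun$ preserves projectives, so $\Eun P_+(\lambda)$ is projective in $\Cat_-$. To identify it, compute using the adjunction from (1) and Lemma \ref{Lem:Fun}(4):
\[
\Hom_{\Cat_-}(\Eun P_+(\lambda), L_-(\mu)) \cong \Hom_{\Cat_+}(P_+(\lambda), \Fun L_-(\mu)) = \Hom_{\Cat_+}(P_+(\lambda), L_+(\mu)) = \delta_{\lambda,\mu} \C.
\]
Since $\Cat_-$ is split artinian over $\C$, this identifies the head of the projective $\Eun P_+(\lambda)$ with the single simple $L_-(\lambda)$, and so $\Eun P_+(\lambda) \cong P_-(\lambda)$.

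For (3), I would translate surjectivity of the counit $\epsilon_N \colon \Eun \Fun N \to N$ into a Hom-injectivity criterion: in an artinian abelian category, $\epsilon_N$ is surjective if and only if the induced map $\Hom_{\Cat_-}(N, L_-(\mu)) \to \Hom_{\Cat_-}(\Eun \Fun N, L_-(\mu))$ is injective for every $\mu$. Via the adjunction from (1) together with $\Fun L_-(\mu) = L_+(\mu)$, the latter map is identified with the functorial map $f \mapsto \Fun f$ from $\Hom_{\Cat_-}(N, L_-(\mu))$ to $\Hom_{\Cat_+}(\Fun N, L_+(\mu))$. Any nonzero $f \colon N \to L_-(\mu)$ is surjective since the target is simple, so by exactness of $\Fun$ and the nonvanishing of $L_+(\mu)$ the image $\Fun f$ is a nonzero surjection onto $L_+(\mu)$, giving the required injectivity. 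The one nontrivial bookkeeping point, and thus the main obstacle, is verifying that the map on Homs induced by $\epsilon_N$ really agrees with the functorial action of $\Fun$; this reduces to the standard unit-counit triangle identity for the adjunction $\Eun \dashv \Fun$ established in (1), and so is routine once set up carefully.
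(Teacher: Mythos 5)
Your proof is correct. Parts (1) and (2) follow the paper essentially verbatim: the same chain of adjunctions for (1), and the same computation $\Hom(\Eun P_+(\lambda), L_-(\mu)) \cong \Hom(P_+(\lambda), \Fun L_-(\mu)) \cong \Hom(P_+(\lambda), L_+(\mu))$ for (2); the only streamlining is that you skip the paper's preliminary observation that the surjections $EP(\lambda+) \twoheadrightarrow E\Delta(\lambda+) \twoheadrightarrow \Delta(\lambda-)$ force $\Eun P_+(\lambda) \twoheadrightarrow \Delta_-(\lambda)$, which your Hom count makes superfluous. For (3), however, you take a genuinely different route. The paper applies $\Fun$ to the cokernel sequence $\Eun\Fun N \to N \to K \to 0$, notes the first arrow becomes a split epimorphism so $\Fun K = 0$, and invokes exactness of $\Fun$ together with its bijectivity on Grothendieck groups (Lemma \ref{Lem:Fun}(3)) to conclude $K=0$. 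You instead convert surjectivity of $\epsilon_N$ into injectivity of $\Hom(N, L_-(\mu)) \to \Hom(\Eun\Fun N, L_-(\mu))$ for every $\mu$, identify this map as $f \mapsto \Fun f$ via the unit--counit triangle identity, and conclude from exactness of $\Fun$ and $\Fun L_-(\mu) = L_+(\mu) \neq 0$ (Lemma \ref{Lem:Fun}(4)). Both arguments rest on the same ingredients from Lemma \ref{Lem:Fun}; the paper's is slightly shorter, while yours avoids the K-group step and makes the role of the counit explicit, which is arguably cleaner.
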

\begin{proof}
(1) follows directly from the constructions of $\mathcal{E},\mathcal{F}$.

Let us prove (2). Since $\Fun$ is exact, $\Eun$  maps projectives to
projectives. Further, $\pi^!(P_+(\lambda))=P(\lambda+)$ by the definition of $\pi^!$.
We have $EP(\lambda+)\twoheadrightarrow E\Delta(\lambda+)\twoheadrightarrow \Delta(\lambda-)$ and
$\Delta(\lambda-)$ lies in $\Cat_-$. So $\Eun P_+(\lambda)\twoheadrightarrow \Delta_-(\lambda)$.
Hence $\Eun P_+(\lambda)$ contains $P_-(\lambda)$ as a direct summand. To prove that $\Eun P_+(\lambda)=P_-(\lambda)$
we need to prove that $\dim \Hom(\Eun P_+(\lambda), L_-(\mu))=\delta_{\lambda,\mu}$. But assertion (4)
of Lemma \ref{Lem:Fun} says $\Fun L_+(\mu) =L_-(\mu)$ and so
$$\Hom(\Eun P_+(\lambda), L_-(\mu))=\Hom(P_+(\lambda), \Fun L_-(\mu))=\Hom(P_+(\lambda), L_+(\mu))$$
that completes the proof of (2).

Let us prove (3). Consider the exact sequence $\Eun\Fun N\rightarrow N\rightarrow K\rightarrow 0$. Apply $\Fun$
to this sequence to get an exact sequence $\Fun\Eun\Fun N\rightarrow \Fun N\rightarrow \Fun K\rightarrow 0$.
But the first arrow is surjective, thanks to the adjointness. So $\Fun K=0$. Since $\Fun$ is exact and
induces a bijection on the Grothendieck groups, we see that $K=0$.
\end{proof}

Our goal is to prove that $\Fun,\Eun$ are quasi-inverse equivalences. This is achieved in the next two lemmas.
The first one describes the behavior of $\Eun$ on standardly filtered objects.

\begin{Lem}\label{Lem:Eun_stand}
We have the following
\begin{enumerate}
\item $\Eun(\Delta_+(\lambda))=\Delta_-(\lambda)$ for any $\lambda\in \underline{\Lambda}$.
\item $\Eun$ is exact on standardly filtered objects.
\item For $N\in \Cat_-^\Delta$ we have $\Eun\Fun(N)\xrightarrow{\sim}N$.
\end{enumerate}
\end{Lem}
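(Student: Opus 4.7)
The plan for part (1) is to unfold the definition $\Eun=\iota^!\circ E\circ\pi^!$ and compute each layer applied to $\Delta_+(\lambda)$. First I will use that $\pi^!(\Delta_+(\lambda))=\Delta(\lambda+)$; this is a standard fact about left adjoints to Serre quotients of highest weight categories, verifiable by checking that both sides represent the same functor on projectives. Next, axiom (i) in the definition of a highest weight $\sl_2$-categorification applied to $\lambda+\in\underline{\Lambda}_+$ (whose signature $\sigma_a^{-1}(\lambda+)$ ends in $+$) provides a standard filtration on $E\Delta(\lambda+)$ whose smallest standard subquotient $\Delta(\lambda^1)=\Delta(\lambda-)$ sits at the top (matching the convention used in the proof of Lemma \ref{Lem:Fun}) and whose remaining standard subquotients $\Delta(\lambda^i)$, $i\geqslant 2$, all have indices in $\underline{\Lambda}_+$.

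To finish part (1), I will apply the right exact functor $\iota^!$ to the short exact sequence $0\to K\to E\Delta(\lambda+)\to \Delta(\lambda-)\to 0$, where $K$ denotes the kernel of the projection onto the top quotient. For any $\mu\in\underline{\Lambda}_+$, the standard $\Delta(\mu)$ has simple head $L(\mu)\notin\Cat_-$, so no nonzero quotient of $\Delta(\mu)$ belongs to $\Cat_-$, forcing $\iota^!(\Delta(\mu))=0$. Propagating this through the standard filtration of $K$ by right exactness and induction on length gives $\iota^!(K)=0$. Together with $\iota^!(\Delta(\lambda-))=\Delta_-(\lambda)$ (from $\iota^!\circ\iota=\operatorname{id}$), the right exact sequence $\iota^!(K)\to \iota^!(E\Delta(\lambda+))\to \iota^!(\Delta(\lambda-))\to 0$ collapses to the desired isomorphism $\Eun(\Delta_+(\lambda))\cong\Delta_-(\lambda)$.

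For part (2) I will induct on the length of the standard filtration of $N\in\Cat_-^\Delta$. The base case, $N=\Delta_-(\lambda)$, reduces to part (1) combined with Lemma \ref{Lem:Fun}(2). For the inductive step, fix a short exact sequence $0\to N'\to N\to \Delta_-(\lambda)\to 0$ with $N'$ of shorter filtration length, apply the exact functor $\Fun$ and then the right exact functor $\Eun$ to obtain the right exact sequence $\Eun\Fun(N')\to \Eun\Fun(N)\to \Eun(\Delta_+(\lambda))\to 0$. Using the inductive identification $\Eun\Fun(N')=N'$ together with part (1), the kernel $K'$ of the surjection $\Eun\Fun(N)\twoheadrightarrow \Delta_-(\lambda)$ is a quotient of $N'$, and hence $[\Eun\Fun(N)]=[K']+[\Delta_-(\lambda)]\leqslant [N']+[\Delta_-(\lambda)]=[N]$ in $[\Cat_-]\cong\Q^{\underline{\Lambda}}$. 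The counit surjection $\epsilon_N:\Eun\Fun(N)\twoheadrightarrow N$ from Lemma \ref{Lem:Eun}(3) gives the reverse inequality $[\Eun\Fun(N)]\geqslant [N]$; equality then forces $\epsilon_N$ to be an isomorphism.

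The main obstacle is the lack of left exactness of $\iota^!$ and $\Eun$, which precludes a direct five-lemma argument in part (2). The workaround is to replace diagram chases with K-theoretic bounds, but this only becomes available once $\Eun$ has been pinned down on a single standard object in part (1), which is why part (1) must be established first.
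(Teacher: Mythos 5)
Your proof is correct and follows essentially the same route as the paper: for (1) compute $\pi^!\Delta_+(\lambda)=\Delta(\lambda+)$, apply $E$, and use that $\iota^!$ kills every standard subquotient indexed by $\underline{\Lambda}_+$ while fixing the top quotient $\Delta(\lambda-)$; for (2) induct on the length of the standard filtration and trap $[\Eun\Fun(N)]$ between $[N]$ and $[N]$ using right exactness of $\Eun$ and the counit surjection. The only step you state more loosely than the paper is $\pi^!\Delta_+(\lambda)=\Delta(\lambda+)$, which the paper justifies by a projective presentation of an object filtered by $\Delta(\mu)$, $\mu\in\underline{\Lambda}_+$ — exactly the "check on projectives" you allude to.
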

\begin{proof}
Let us prove (1). By Lemma \ref{Lem:hw_quot}, $\pi^!\pi(N)\rightarrow N$
is an isomorphism for any object $N\in \Cat$ admitting a filtration with quotients
of the form $\Delta(\mu+)$. In particular,  $\pi^!\Delta_+(\lambda)=\Delta(\lambda+)$.

Apply $E$ to $\Delta(\lambda+)$. The top quotient of the standard filtration is $\Delta(\lambda-)$
and all the other subquotients are $\Delta(\mu)$ with $\mu\in \underline{\Lambda}_+$.
So $\iota^! E\Delta(\lambda+)=\Delta(\lambda-)$ and therefore $\Eun\Delta_+(\lambda)=\Delta_-(\lambda)$.

To prove (2) we notice that $\pi^!,E,\iota^!$ map standardly filtered objects to standardly filtered
ones and are exact on standardly filtered objects.

(3) follows from (1) and (2): we use $\mathcal{EF}\Delta_-(\lambda)\xrightarrow{\sim}\Delta_-(\lambda)$, the fact
that $\mathcal{EF}$ is exact on standardly filtered objects, and apply the 5-lemma.
\end{proof}

The following lemma finally implies that $\Fun$ is an equivalence.

\begin{Lem}\label{Lem:Fun_proj}
We have $\Fun(P_-(\lambda))=P_+(\lambda)$ and $\Fun$ is fully faithful on $\Cat$-$\operatorname{proj}$.
\end{Lem}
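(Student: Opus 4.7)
The plan is to construct a natural map $\phi\colon P_+(\lambda)\to\Fun P_-(\lambda)$ via adjunction, then to show independently that $\phi$ is surjective and that the two sides have the same class in the Grothendieck group; full-faithfulness on projectives will then be a formality from the $(\Eun,\Fun)$-adjunction.

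By Lemma \ref{Lem:Eun}(2), $\Eun P_+(\lambda)=P_-(\lambda)$, so the adjunction identifies $\Hom(P_+(\lambda),\Fun P_-(\lambda))$ with $\End(P_-(\lambda))$; let $\phi$ be the map corresponding to $\operatorname{id}_{P_-(\lambda)}$. By Lemma \ref{Lem:Eun_stand}(2) the counit $\varepsilon_{P_-(\lambda)}\colon\Eun\Fun P_-(\lambda)\to P_-(\lambda)$ is an isomorphism, and the triangle identity forces $\Eun\phi=\varepsilon_{P_-(\lambda)}^{-1}$, in particular an isomorphism.

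Next I observe that $\Eun$ is conservative: using adjunction together with $\Fun L_-(\mu)=L_+(\mu)$ from Lemma \ref{Lem:Fun}(4), one gets $\Hom(M,L_+(\mu))=\Hom(\Eun M,L_-(\mu))$ for every $\mu\in\underline{\Lambda}$, so $\Eun M=0$ implies $M$ has no simple quotient and hence vanishes (as $\Cat$ is split artinian). Applying right exactness of $\Eun$ to $P_+(\lambda)\to\Fun P_-(\lambda)\to\operatorname{coker}\phi\to 0$, and using that $\Eun\phi$ is an isomorphism, yields $\Eun(\operatorname{coker}\phi)=0$; by conservativity $\phi$ is surjective. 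For injectivity I compute in the Grothendieck group: exactness of $\Fun$ together with Lemma \ref{Lem:Fun}(2) gives $[\Fun P_-(\lambda)]=\sum_\nu(P_-(\lambda):\Delta_-(\nu))\,[\Delta_+(\nu)]$, while BGG reciprocity in $\Cat_-$ and in $\Cat_+$, combined with $[\nabla_-(\nu):L_-(\lambda)]=[\nabla_+(\nu):L_+(\lambda)]$ (again from exactness of $\Fun$ and Lemma \ref{Lem:Fun}), gives $(P_-(\lambda):\Delta_-(\nu))=(P_+(\lambda):\Delta_+(\nu))$, so $[\Fun P_-(\lambda)]=[P_+(\lambda)]$. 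Surjectivity of $\phi$ together with equality of Grothendieck classes forces $\ker\phi=0$, establishing $\Fun P_-(\lambda)\cong P_+(\lambda)$.

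For full-faithfulness on projectives, the $(\Eun,\Fun)$-adjunction and Lemma \ref{Lem:Eun_stand}(2) give, for all $\lambda,\mu$,
\[
\Hom(P_-(\lambda),P_-(\mu))=\Hom(\Eun\Fun P_-(\lambda),P_-(\mu))\cong\Hom(\Fun P_-(\lambda),\Fun P_-(\mu)),
\]
and the triangle identity identifies this bijection with $f\mapsto\Fun f$; full-faithfulness on arbitrary projectives follows by additivity. The key technical point of the whole argument is conservativity of $\Eun$, which rests crucially on $\Fun L_-(\mu)=L_+(\mu)$ from Lemma \ref{Lem:Fun}(4) and so on the combinatorics of the splitting; the rest is a clean adjunction-plus-Grothendieck-group exercise.
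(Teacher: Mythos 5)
Your argument is correct and follows the paper's own proof essentially step for step: you construct $\phi$ via the $(\Eun,\Fun)$-adjunction and the identification $\Eun P_+(\lambda)\cong P_-(\lambda)$, prove surjectivity by applying the right exact functor $\Eun$ to the cokernel sequence and using that $\Eun M=0$ forces $M$ to have no simple quotient (via $\Fun L_-(\mu)=L_+(\mu)$), then upgrade to an isomorphism by matching Grothendieck classes through BGG reciprocity, and finally obtain full-faithfulness from the counit isomorphism $\Eun\Fun\xrightarrow{\sim}\operatorname{id}$ on standardly filtered objects. The only cosmetic point is that ``conservativity'' is a slight overstatement for what you actually prove and use, namely that $\Eun$ reflects the zero object, but this is exactly the property the paper establishes and invokes as well.
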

Here, as usual, $\Cat$-$\operatorname{proj}$ denotes the full subcategory of $\Cat$ consisting of
the projective modules.

\begin{proof}
We have an isomorphism $$\sigma:\Hom(P_+(\lambda), \Fun P_-(\lambda))\xrightarrow{\sim}\Hom(\Eun P_+(\lambda), P_-(\lambda))\xrightarrow{\sim}\Hom(P_-(\lambda), P_-(\lambda)).$$

We claim that $\varphi:=\sigma^{-1}(\operatorname{id})$ is an isomorphism.
To check this recall that $\sigma$ is obtained as follows: for $\varphi\in \Hom(P_+(\lambda), \Fun P_-(\lambda))$
we have $\sigma(\varphi)=\eta\circ \Eun\varphi\circ \nu$, where $\eta$ is a natural morphism $\Eun\Fun P_-(\lambda)\rightarrow
P_-(\lambda)$ that was shown to be an isomorphism in Lemma \ref{Lem:Eun}, and $\nu$ is an isomorphism $P_-(\lambda)\xrightarrow{\sim}\Eun P_+(\lambda)$, see Lemma \ref{Lem:Eun_stand}.

Let us show that $\varphi$ is surjective. Consider the exact sequence $P_+(\lambda)\xrightarrow{\varphi} \Fun P_-(\lambda)\rightarrow N\rightarrow 0$ and apply $\Eun$ to it. We get an exact sequence $P_-(\lambda)\rightarrow P_-(\lambda)\rightarrow \Eun(N)\rightarrow 0$.
But the first arrow is nothing else but $\sigma(\varphi)$, i.e., is the identity. So $\Eun(N)=0$. Let us check that this implies
$N=0$. Indeed, $0=\Hom(\Eun(N),L_-(\mu))=\Hom(N, \Fun L_-(\mu))$ but the latter is $\Hom(N,L_+(\mu))$ by the
last assertion of Lemma \ref{Lem:Fun}. It follows that $N$ has no head and hence is 0. So we have proved that $\varphi$
is surjective.

To prove that $\varphi$ is an isomorphism it remains to show that $[P_+(\lambda)]=[\Fun(P_-(\lambda))]$.
Since $\Fun$ is exact and maps $\Delta_-(\lambda)$ to $\Delta_+(\lambda)$, under our identification of the
$K_0$-groups, the class of $\Fun(P_-(\lambda))$ coincides with that of $P_-(\lambda)$. So it remains to show that, for all $\lambda,\mu\in \underline{\Lambda}$, the multiciplities $(P_+(\lambda):\Delta_+(\mu)),(P_-(\lambda):\Delta_-(\mu))$ are equal.
But thanks to the BGG reciprocity, this is equivalent to checking $(\nabla_+(\mu):L_+(\lambda))=(\nabla_-(\mu):L_-(\lambda))$.
The latter follows from the exactness of $\Fun$ and  the isomorphisms $\Fun(\nabla_-(\mu))=\nabla_+(\mu), \Fun(L_-(\lambda))=L_+(\lambda)$ that were established in Lemma \ref{Lem:Fun}. The proof that $\varphi$ is an isomorphism is complete.

The claim that $\Fun$ is fully faithful on projectives follows now from
$$\Hom(\Fun P_-(\lambda), \Fun P_-(\mu))=\Hom(\Eun \Fun P_-(\lambda), P_-(\mu))=\Hom(P_-(\lambda), P_-(\mu)).$$
\end{proof}

\subsection{Categorifications on $\Cat_\pm$}\label{SS:cat_pm}
The goal of this subsection is to prove the following proposition.

\begin{Prop}\label{Prop:reduced_cat}
The functors $\underline{E}:=E, \underline{F}:=\Eun \pi F^{(2)}\iota$ define a structure of
a highest weight $\sl_2$-categorification (with respect to $\underline{\Lambda}$) on $\Cat_-$.
\end{Prop}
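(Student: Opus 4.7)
The plan is to split the proof into three parts: (a) verifying that $\underline E, \underline F$ make sense as exact endofunctors of $\Cat_-$; (b) checking the highest weight compatibility axioms (i), (ii) of Subsection \ref{SS_hw_def} for the induced hierarchy on $\underline\Lambda$; and (c) endowing $(\underline E, \underline F)$ with the biadjunction data together with natural transformations $X, T$ satisfying the affine Hecke relations and with a weight decomposition. The key tools are axioms (i), (ii) for $\Cat$, the equivalence $\Fun:\Cat_-\xrightarrow\sim\Cat_+$ with quasi-inverse $\Eun$ (from Lemma \ref{Lem:Fun_proj} and the preceding lemmas), and the decomposition $F^2 \cong F^{(2)}\oplus F^{(2)}$ (so that $F^{(2)}$ is an exact direct summand of $F^2$).

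First I would handle the easy side: $\underline E = E|_{\Cat_-}$ is well-defined because axiom (i) applied to $\lambda -\in\underline\Lambda_-$ shows that $E\Delta(\lambda-)$ has a standard filtration by $\Delta(\lambda^i-)$ where the $\lambda^i$ are the flips of $+$ in the first $n_a-1$ coordinates of $\sigma_a^{-1}(\lambda-)$; these all lie in $\Cat_-$. This directly yields axiom (i) for $\Cat_-$ with respect to $\underline\Lambda$. Exactness of $\underline F = \Eun\pi F^{(2)}\iota$ follows as each factor is exact. For axiom (ii) I would apply axiom (ii) for $\Cat$ twice to $\Delta(\lambda-)$, producing a standard filtration of $F^2\Delta(\lambda-)$; extracting a direct summand via $F^2\cong F^{(2)\oplus 2}$ and uniqueness of standard filtrations yields a standard filtration of $F^{(2)}\Delta(\lambda-)$ with subquotients $\Delta(\mu_{i,k}-)$ (indexed by unordered pairs of $-$-positions in $\sigma_a^{-1}(\lambda)$ flipped to $+$, lying in $\Cat_-$) and $\Delta(\mu_i+)$ (indexed by single such positions, lying in $\Cat_+$). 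Then $\pi$ kills the $\Cat_-$-subquotients, and $\Eun$ converts each $\Delta_+(\mu_i)$ into $\Delta_-(\mu_i)$ by Lemma \ref{Lem:Eun_stand}(1). The resulting standard filtration of $\underline F\Delta_-(\lambda)$ by $\Delta_-(\mu_i)$ indexed by positions of $-$ in $\sigma_a^{-1}(\lambda)$ is precisely what axiom (ii) prescribes on the induced hierarchy structure on $\underline\Lambda$.

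The hard part is producing the biadjunction data between $\underline E$ and $\underline F$ compatible with the affine Hecke structure, since the definition of $\underline F$ is not simply a restriction of $F$. I would chain the adjoints appearing in $\underline F = \Eun\pi F^{(2)}\iota$: biadjointness of $(E^{(2)}, F^{(2)})$ on $\Cat$ (from the $\sl_2$-axioms), the Serre adjunctions $(\iota^!,\iota)$ and $(\pi^!, \pi)$, and the mutual biadjointness of $\Fun,\Eun$ as quasi-inverse equivalences. This produces formal adjoints of $\underline F$ of the shape $\iota^! E^{(2)}\pi^!\Fun$, which I must identify with $\underline E$ together with compatible units and counits. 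The identification is checked on projectives via $\pi^!\Fun\,P_-(\lambda)\cong P(\lambda-)$ (from Lemmas \ref{Lem:Fun_proj} and \ref{Lem:Eun}(2)), together with the known action of $E$ and $E^{(2)}$ on $P(\lambda-)$ dictated by axiom (i); this gluing is where I expect the most technical care to be required. The natural transformations $X,T$ and the weight grading descend from $\Cat$ by restriction (with an overall shift in weight accounting for the frozen last coordinate of $\underline\Lambda_-\subset\Lambda$), and conditions (iii$^1$)–(iii$^3$) transfer automatically since $\Cat_-$ is a Serre subcategory defined over the same subfield.
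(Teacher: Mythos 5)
Your plan for axioms (i) and (ii) is sound and close in spirit to the paper's (the paper deduces (ii) from its Lemma \ref{Lem:K_group} once adjointness is in hand), but the heart of the proposition is the biadjointness and there you diverge from the paper's argument and run into trouble.

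The paper's route is to compute the formal left adjoint of $\underline{E}=E|_{\Cat_-}$, namely $\underline{E}^!=\iota^!F\iota$, and then to prove $\underline{E}^!\cong\underline{F}$ by showing $\Fun\underline{E}^!=\pi F\iota\iota^!F\iota$ is isomorphic to $\Fun\underline{F}\cong\pi F^{(2)}\iota$. You instead compute the formal left adjoint of $\underline{F}$ by chaining adjoints, arriving at $\iota^!E^{(2)}\pi^!\Fun$, and propose to identify this with $\underline{E}$. This is in principle an equivalent task, but your chain goes through $E^{(2)}$ rather than $F$ alone and is therefore at least one notch more complicated; the paper's choice of which adjoint to compute formally avoids the divided power on that side entirely.

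There is also a concrete error. You write $\pi^!\Fun\,P_-(\lambda)\cong P(\lambda-)$, citing Lemmas \ref{Lem:Fun_proj} and \ref{Lem:Eun}. But Lemma \ref{Lem:Fun_proj} gives $\Fun\,P_-(\lambda)=P_+(\lambda)$ and the definition of $\pi^!$ gives $\pi^!P_+(\lambda)=P(\lambda+)$, so $\pi^!\Fun\,P_-(\lambda)=P(\lambda+)$, not $P(\lambda-)$. The subsequent step, comparing $\iota^!E^{(2)}P(\lambda+)$ with $E\,P_-(\lambda)$, is not the formal triviality you imply.

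More significantly, you gloss over the genuine difficulty, which you flag only as ``where I expect the most technical care to be required.'' The issue is that the desired isomorphism of functors is not automatic from the K-class comparison: one must choose a realization of the direct summand $F^{(2)}\subset F^2$ (equivalently an embedding $\C\hookrightarrow\C^2$) so that the resulting map $\pi F^{(2)}\iota\to\pi F\iota\iota^!F\iota$ is an isomorphism, and a priori the correct choice could depend on $\lambda$. The paper sets up the exact sequence $\pi F\pi^!\pi F\iota\to\pi F^2\iota\to\pi F\iota\iota^!F\iota\to 0$, shows that for each fixed $\lambda$ the set of admissible embeddings is open and nonempty in $\mathbb{P}^1$, and then invokes the hypotheses (iii$^1$)--(iii$^3$) to argue that a single choice works uniformly (on each block, over a large enough field, or over an uncountable field). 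These hypotheses are used in the proposition precisely for this step, and without some version of this argument your identification is not established. Your proposal does not mention (iii$^1$)--(iii$^3$) at all in this part.
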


Let us point out that $E$ preserves $\Cat_-$ because it is exact and preserves $\Cat_-^\Delta$.
The  most non-trivial part of the proof is to show that $\underline{F}$ is isomorphic both to the left
and to the right adjoint of $\underline{E}$.

The functor $\underline{E}$ does have both left and right adjoint functors $\underline{E}^!$ and $\underline{E}^*$.
They are constructed as follows: $\underline{E}^!=\iota^! F \iota, \underline{E}^*=\iota^* F \iota$, where $\iota^*$
is the right adjoint to $\iota$ (sending an object $N$ to its maximal submodule belonging to $\Cat_-$).
We will prove that $\underline{E}^!$ is isomorphic to $\underline{F}$. The analogous statement
for $\underline{E}^*$ is proved by passing to $\Cat^{opp}$. We will actually prove that $\Fun\underline{E}^!=\pi F\iota\iota^! F\iota$
is isomorphic to $\Fun\underline{F}=\pi F^{(2)}\iota$.

%

Composing the exact sequence $\pi^!\pi\rightarrow \operatorname{id}\rightarrow \iota\iota^!\rightarrow 0$,
see Lemma \ref{Lem:hw_quot}, (iii),  with $\pi F$ on the left and $F\iota$ on the right
we have an exact sequence of functors $\Cat_-\rightarrow \Cat_+$
\begin{equation}\label{eq:fun_seq}
\pi F\pi^!\pi F \iota\rightarrow \pi F^2\iota\rightarrow \pi F\iota\iota^! F\iota\rightarrow 0.
\end{equation}
The left functor morphism becomes injective on $\Cat_-^\Delta$.

The proof of the following lemma is straightforward.

\begin{Lem}\label{Lem:K_group}
In $[\Cat_+]\cong \Q^{\underline{\Lambda}}$ we have $$[\pi F\pi^!\pi F \iota\left(\Delta_-(\lambda)\right)]=[\pi F\iota\iota^! F\iota\left(\Delta_-(\lambda)\right)]=\sum_\mu [\Delta_+(\mu)],$$
where the summation is taken over all $\mu\in \underline{\Lambda}$ that are obtained from $\lambda$ by replacing
one $-$ with a +.
\end{Lem}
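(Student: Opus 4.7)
The plan is to apply axiom (ii) of Subsection~\ref{SS_hw_def} twice, first to $\Delta(\lambda-)$ and then to the standardly filtered outputs, combined with the exact sequence on $\Cat^\Delta$ from Lemma~\ref{Lem:adj_fun_morph}(3). Let $b\in\underline{\mathfrak{A}}$ index the family of $\lambda\in\underline{\Lambda}$ and write $t=\underline{\sigma}_b^{-1}(\lambda)$, so that $\sigma_b^{-1}(\lambda-)=t-$ and $\sigma_b^{-1}(\lambda+)=t+$. First I would apply axiom (ii) to $\Delta(\lambda-)$: flipping a $-$ in $t-$ to a $+$ produces either $\Delta(\lambda+)\in\Cat_+$ (if we flip the rightmost $-$, at position $n_b$) or $\Delta(\mu-)\in\Cat_-$ (if we flip a $-$ lying in the initial segment $t$), where $\mu\in\underline{\Lambda}$ is the element obtained from $\lambda$ by flipping that $-$ to $+$. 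Hence $F\iota(\Delta_-(\lambda))=F\Delta(\lambda-)$ has a standard filtration with exactly these subquotients.

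Since $\underline{\Lambda}_-\not>\underline{\Lambda}_+$ by axiom (S0), $\Delta(\lambda+)$ is the maximal standard in that filtration, so Lemma~\ref{Lem:adj_fun_morph}(3) yields
$$\pi^!\pi F\iota(\Delta_-(\lambda))=\Delta(\lambda+),\qquad \iota\iota^!F\iota(\Delta_-(\lambda))\in\Cat^\Delta\ \text{with standard subquotients}\ \Delta(\mu-).$$
For the first class in the lemma I would apply axiom (ii) to $\Delta(\lambda+)$: since $\sigma_b^{-1}(\lambda+)=t+$, every $-$ sits strictly inside $t$, so flipping it yields a string of the form $t'+$, and consequently $F\Delta(\lambda+)$ is standardly filtered by the $\Delta(\mu+)$ for the same set of $\mu$'s as in the first paragraph. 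All these lie in $\Cat_+$, so $[\pi F\pi^!\pi F\iota(\Delta_-(\lambda))]=\sum_\mu[\Delta_+(\mu)]$ in $[\Cat_+]$. For the second class, each standard subquotient $\Delta(\mu-)$ of $\iota\iota^!F\iota(\Delta_-(\lambda))$ contributes $F\Delta(\mu-)$ under $F$; by the analysis in the first paragraph this has a unique $\Cat_+$-subquotient, namely $\Delta(\mu+)$, so after $\pi$ it contributes $[\Delta_+(\mu)]$ and summing over $\mu$ yields the same total.

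The only real obstacle here is combinatorial bookkeeping: one must confirm the bijection between the non-final $-$'s in $t-$ and the elements $\mu\in\underline{\Lambda}$ obtainable from $\lambda$ by a single flip of a $-$ to a $+$, but this is immediate from the definition of the induced family structure on $\underline{\Lambda}$ via $\underline{\sigma}_b$.
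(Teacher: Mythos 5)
Your proof is correct, and since the paper declares this lemma's proof "straightforward" and omits it, your argument is precisely the intended one: apply axiom (ii) to $\Delta(\lambda-)$ to get its standard filtration with unique $\underline{\Lambda}_+$-subquotient $\Delta(\lambda+)$ sitting at the bottom and the remaining subquotients $\Delta(\mu-)$ above it, split via Lemma~\ref{Lem:adj_fun_morph}, and then apply axiom (ii) once more to $\Delta(\lambda+)$ and to each $\Delta(\mu-)$ before passing through $\pi$. The observation that the two computations produce the same index set of $\mu$'s (the $-$'s of $t$ are exactly the non-final $-$'s of $t-$, which in turn are exactly the $-$'s of $t+$) is the right bookkeeping and closes the argument.
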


Recall that we have a functor isomorphism $\C^2\otimes_{\C}F^{(2)}\cong F^2$. We claim that one can choose
an embedding $\C\hookrightarrow \C^2$ such that the corresponding composition $\pi F^{(2)}\iota\hookrightarrow
\pi F^2\iota\rightarrow \pi F\iota\iota^! F\iota$ is an isomorphism.

For this we will show that the set of all embeddings $\C\hookrightarrow \C^2$ such that the morphism \begin{equation}\label{eq:req_iso}\pi F^{(2)}\iota(\Delta_-(\lambda))\rightarrow \pi F\iota\iota^! F\iota(\Delta_-(\lambda))\end{equation}
is iso is open in $\mathbb{P}^1$.
The morphism is an iso if and only if the map $$\C\cong\Hom(\pi F\iota\iota^! F\iota(\Delta_-(\lambda)), \nabla(\mu))\rightarrow
\Hom(\pi F^{(2)}\iota(\Delta_-(\lambda)), \nabla(\mu))\cong \C$$
is non-zero for $\mu$ as in Lemma \ref{Lem:K_group} (for all other $\mu$'s the spaces
involved are zero). But this map is a composition of an embedding
\begin{equation}\label{eq:embedding}\Hom(\pi F\iota\iota^! F\iota(\Delta_-(\lambda)), \nabla(\mu))\rightarrow
\Hom(\pi F^{2}\iota(\Delta_-(\lambda)), \nabla(\mu))\cong \C^2\end{equation}
and the projection
\begin{equation}\label{eq:projection}
\Hom(\pi F^{2}\iota(\Delta_-(\lambda)), \nabla(\mu))\rightarrow
\Hom(\pi F^{(2)}\iota(\Delta_-(\lambda)), \nabla(\mu)).\end{equation}
The latter  is the dual of our embedding $\C\hookrightarrow \C^2$ and therefore  the claim in the beginning of the
paragraph holds.

Moreover, if (iii$^2$) holds, then
(\ref{eq:req_iso}) can be made an isomorphism for all $\lambda$ (recall,  we assume
that $\Lambda$ is countable). It follows that we have an isomorphism of right exact functors $\underline{E}^!=\pi F\iota\iota^! F\iota,
\pi F^{(2)}\iota$ on $\Cat$-$\operatorname{proj}$ (because any projective is $\Delta$-filtered) and hence on $\Cat$.

If (iii$^1$) holds for $\Cat$, then, similarly to the previous paragraph, we see that
the functors $\pi F\iota\iota^! F\iota$ and $\pi F^{(2)}\iota$ are isomorphic on $\Cat$-$\operatorname{proj}$
blockwise and hence are isomorphic.

Finally, let us suppose that (iii$^3$) holds. Then the embedding
(\ref{eq:embedding}) is defined over $\C'$ for all $\lambda,\mu$. It follows that we can take a finite extension
$\C''$ of $\C'$ and a projection (\ref{eq:projection}) defined over $\C''$ such that composition
of (\ref{eq:embedding}) and (\ref{eq:projection}) is an isomorphism. This again implies that the functors
$\pi F^{(2)} \iota$ and $\underline{E}^!$ are isomorphic.



Checking that the functors $\underline{E},\underline{F}$ form an $\sl_2$-categorification
is now easy (there are natural transformations $X$ of $\underline{E}$ and $T$ of $\underline{E}^2$
induced from the analogous transformations of $E,E^2$). Let us check that this categorification
is  highest weight with respect to the
hierarchy structure on $\underline{\Lambda}$. (i) is clear for $\underline{E}$, while  for $\underline{F}$
it follows directly from the construction.  (ii) is again clear for $\underline{E}$ and for
$\underline{F}$ it follows from  Lemma \ref{Lem:K_group}. Let us remark on the conditions (iii$^?$). Obviously,
(iii$^2$) is preserved. It is also clear that (iii$^1$) is preserved. As for (iii$^3$), the category
$\Cat^-$ is defined over $\C''$ from the construction.

A categorification on $\Cat_+$ is obtained by transferring the categorification
on $\Cat_-$ via the equivalence $\mathcal{F}$.

\begin{Rem}\label{Rem:splitting_duality}
The splitting construction can be adapted  to the dual hierarchy setting, as well.
Given a highest weight $\sl_2$-categorification $\Cat$ with respect to a hierarchy structure on $\Lambda$ we can apply the splitting construction to $\bar{\Cat}$. We get the subquotient $\bar{\Cat}_=$ of $\Cat$ with poset $\bar{\Lambda}_=$, whose categorification is inherited from $\Cat$ in a naive way. Then we have an extension $0\rightarrow \bar{\Cat}_+\rightarrow \bar{\Cat}_=\rightarrow \bar{\Cat}_-\rightarrow 0$. The categories $\bar{\Cat}_+, \bar{\Cat}_-$ come equipped with equivalent highest weight $\sl_2$-categorification structures (with respect to the dual hierarchy structure on $\underline{\bar{\Cat}}$).
\end{Rem}

\begin{Rem}\label{Rem:F_stand}
We still assume that $\Lambda=\Lambda^a_=$.
The functor $\pi^!$ defines an equivalence of $\Cat_+^\Delta$ with the full subcategory in
$\Cat^\Delta$ of all objects that admit a filtration with subquotients  $\Delta(\lambda), \lambda\in \underline{\Lambda}^a_+$.
Then on $\Cat_+^\Delta$ the functor $\underline{F}$ coincides with $F$. Indeed, we can describe $\underline{E}$ on $\Cat_+^\Delta$
as the composition of $E$ with taking the maximal submodule lying in $\Cat_+^\Delta$. From this description it is
clear that the restriction of $F$ to $\Cat^\Delta_+$ is left adjoint to $\underline{E}$ and hence
is isomorphic to $\underline{F}$.
\end{Rem}

\subsection{Filtration}\label{SS_family_filtr}
Our goal here is to explain a technique that will reduce the study of some questions about
$\Cat$ to the case when $\Lambda$ is a single family $\Lambda_a$.
Let $\prec$ be the coarsest possible ordering on $\Lambda$ making $\Cat$ into
a highest weight category, see Remark \ref{Rem:ordering}.
For disjoint subsets $\Lambda_1,\Lambda_2$ of $\Lambda$ we write $\Lambda_1\prec\Lambda_2$ if
$\lambda_1\prec\lambda_2$ for some $\lambda_1\in \Lambda_1,\lambda_2\in \Lambda_2$.
The main result of this subsection is the following proposition.

\begin{Prop}\label{Prop:filtration}
The transitive closure of $\prec$ is  a partial order on the set of families.
\end{Prop}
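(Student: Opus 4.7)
The plan is noetherian induction on the hierarchy $(\mathfrak{A}',\Lambda(\mathfrak{A}'))\in\mathfrak{H}$, justified by axiom (H2). Assuming the claim for every proper sub-hierarchy, suppose for contradiction that there is a cycle of pairwise distinct families $\Lambda_{a_0}\prec^*\Lambda_{a_1}\prec^*\cdots\prec^*\Lambda_{a_{k-1}}\prec^*\Lambda_{a_0}$ with $k\geqslant 2$, where $\prec^*$ denotes the transitive closure of the relation on families lifted from $\prec$ on $\Lambda$.

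First I would carry out a \emph{level reduction}. Fix $a=a_0$ and consider the splitting $\Lambda=\Lambda^a_{<}\sqcup\Lambda^a_{=}\sqcup\Lambda^a_{>}$. Axiom (S0) implies that, in the partial order $<$ on $\Lambda$, no element of a higher piece is $<$ an element of a lower piece. Since $\lambda\prec_0\mu$ forces $\lambda<\mu$ (because $L(\lambda)$ is a composition factor of $\Delta(\mu)$), the piece containing $\lambda$ is $\leqslant$ the piece containing $\mu$; this monotonicity passes to the transitive closure $\prec$ and, by (S2), descends to families. So every family in the cycle sits at the same level as $\Lambda_{a_0}=\Lambda_a\subseteq\Lambda^a_{=}$, hence in $\Lambda^a_{=}$. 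Passing to the highest weight subquotient $\Cat_{=}$ of Section~\ref{S_cat_spl}, which inherits a highest weight $\sl_2$-categorification, we may assume $\Lambda=\Lambda^a_{=}=\underline{\Lambda}^a_{-}\sqcup\underline{\Lambda}^a_{+}$.

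Next I would \emph{descend to the sub-hierarchy} on $\underline{\Lambda}^a$. By (H1), the pair $(\underline{\mathfrak{A}}^a,\underline{\Lambda}^a)$ is a proper element of $\mathfrak{H}$, so the induction hypothesis applies. Each family $\Lambda_b\subseteq\Lambda$ corresponds, via (S4), to a family $\underline{\Lambda}_b\subseteq\underline{\Lambda}^a$, and it suffices to translate each $\prec_0$-step in $\Lambda$ into a $\prec^*$-chain of $\underline{\Lambda}^a$-families, so that the cycle in $\Lambda$ induces a cycle of distinct families in $\underline{\Lambda}^a$, contradicting the induction hypothesis. By (S0) a step $\lambda\prec_0\mu$ has three possibilities: (a) both $\lambda,\mu\in\underline{\Lambda}^a_{-}$, where $\Delta(\mu)=\Delta_{-}(\underline{\mu})$ (its composition factors all lie in $\underline{\Lambda}^a_{-}$) and the iso $\underline{\Lambda}^a_{-}\cong\underline{\Lambda}^a$ yields $\underline{\lambda}\prec_0\underline{\mu}$ in $\underline{\Lambda}^a$; (b) both $\lambda,\mu\in\underline{\Lambda}^a_{+}$, where exactness of $\pi$ with $\pi L(\lambda)=L_{+}(\underline{\lambda})$, $\pi\Delta(\mu)=\Delta_{+}(\underline{\mu})$ yields $\underline{\lambda}\prec_0\underline{\mu}$ in $\Cat_{+}$; (c) the \emph{cross} case $\lambda\in\underline{\Lambda}^a_{-}$, $\mu\in\underline{\Lambda}^a_{+}$ (the reverse being forbidden by (S0)).

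The cross case (c) is where I expect the main obstacle. Here I would invoke axiom (ii) of a highest weight $\sl_2$-categorification applied to $\iota^{-1}(\mu)\in\underline{\Lambda}^a_{-}$: the standard filtration on $F\Delta(\iota^{-1}(\mu))$ has $\Delta(\mu)$ as its maximal standard subquotient (obtained by flipping the final $-$ of $\sigma_c^{-1}(\iota^{-1}(\mu))$ to a $+$, thanks to (S4)), and, as is standard in highest weight categories, this maximal standard appears as a subobject. This produces a short exact sequence
\[
0\to\Delta(\mu)\to F\Delta(\iota^{-1}(\mu))\to Q\to 0
\]
whose quotient $Q$ has all its standards, hence all its composition factors, in $\underline{\Lambda}^a_{-}$, so $Q\cong\iota^!\bigl(F\Delta(\iota^{-1}(\mu))\bigr)\cong\underline{F}\Delta_{-}(\underline{\mu})$ via the isomorphism $\underline{E}^!\cong\underline{F}$ of Proposition~\ref{Prop:reduced_cat}. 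Any cross composition factor $L(\lambda)\hookrightarrow\Delta(\mu)$ is then also a composition factor of $F\Delta(\iota^{-1}(\mu))$, and the biadjunction identity $[F\Delta(\iota^{-1}(\mu)):L(\lambda)]=[EP(\lambda):P(\iota^{-1}(\mu))]$ exhibits this multiplicity as one computed via the categorification on $\Cat_{-}$; chasing the sequence and unfolding the standard filtration of $\underline{F}\Delta_{-}(\underline{\mu})$ (whose standards lie in the family $\underline{\Lambda}_c$ of $\underline{\mu}$) produces a $\prec^*$-chain from $\underline{\lambda}$ to $\underline{\mu}$ inside $\underline{\Lambda}^a$. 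Concatenating (a), (b), (c) converts the cycle of distinct families in $\Lambda$ into a cycle of distinct families in $\underline{\Lambda}^a$, the desired contradiction. The hardest step is precisely (c): unlike (a) and (b), cross composition factors do not transfer automatically and require genuine use of the categorical splitting developed in Section~\ref{S_cat_spl}.
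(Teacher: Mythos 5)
Your overall architecture coincides with the paper's: noetherian induction on the hierarchy via (H2), reduction to $\Lambda=\Lambda^a_=$ using (S0)--(S3), and a case analysis in which the only genuine difficulty is the ``cross'' step from $\underline{\Lambda}^a_-$ to $\underline{\Lambda}^a_+$. Cases (a) and (b) are fine. The gap is in case (c), which is exactly the heart of the proof. You start from a cross composition factor, i.e.\ $[\Delta(\mu):L(\lambda)]\neq 0$ with $\lambda\in\underline{\Lambda}^a_-$, $\mu\in\underline{\Lambda}^a_+$, embed $\Delta(\mu)$ as the bottom of the standard filtration of $F\Delta(\iota^{-1}(\mu))$, and then claim that ``chasing the sequence'' $0\to\Delta(\mu)\to F\Delta(\iota^{-1}(\mu))\to Q\to 0$ transfers $L(\lambda)$ into $Q\cong\underline{F}\Delta_-(\underline{\mu})$. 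This does not follow: the exact sequence only gives $[F\Delta(\iota^{-1}(\mu)):L(\lambda)]=[\Delta(\mu):L(\lambda)]+[Q:L(\lambda)]$, and the occurrence of $L(\lambda)$ you are trying to track lives, by hypothesis, precisely in the subobject $\Delta(\mu)$; nothing forces $[Q:L(\lambda)]>0$, so the chase is circular. (The auxiliary identity $[F\Delta(\iota^{-1}(\mu)):L(\lambda)]=[EP(\lambda):P(\iota^{-1}(\mu))]$ is also wrong as stated: adjunction gives $[FM:L(\lambda)]=\dim\Hom(EP(\lambda),M)$, which for $M=\Delta(\iota^{-1}(\mu))$ is $\sum_{\lambda'}m_{\lambda'}[\Delta(\iota^{-1}(\mu)):L(\lambda')]$ with $EP(\lambda)=\bigoplus P(\lambda')^{\oplus m_{\lambda'}}$, not a multiplicity of a projective summand -- and controlling which $P(\lambda')$ occur in $EP(\lambda)$ is Proposition \ref{Prop:proj_arb}, which is proved \emph{after} and \emph{using} the family filtration.)

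The paper circumvents this by not working with composition factors at the crossing. Using BGG reciprocity, $\prec$ is regenerated by the relation $\Ext^1(\Delta(\lambda_i),L(\lambda_{i+1}))\neq 0$; one chooses a tight chain from $\lambda$ to $\mu$ and isolates the crossing step $\lambda_m\in\underline{\Lambda}^a_-$, $\lambda_{m+1}\in\underline{\Lambda}^a_+$. Setting $\tilde{\lambda}_m:=\iota(\lambda_m)\in\underline{\Lambda}^a_+$, the top standard quotient of $E\Delta(\tilde{\lambda}_m)$ is $\Delta(\lambda_m)$; a head argument on the kernel of $E\Delta(\tilde{\lambda}_m)\twoheadrightarrow\Delta(\lambda_m)$ shows that $\Ext^1(E\Delta(\tilde{\lambda}_m),L(\lambda_{m+1}))\neq 0$, hence by adjunction $\Ext^1(\Delta(\tilde{\lambda}_m),FL(\lambda_{m+1}))\neq 0$. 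This produces a constituent $L(\tilde{\lambda}_{m+1})$ of $FL(\lambda_{m+1})$ with $\tilde{\lambda}_{m+1}\in\underline{\Lambda}^a_+$ and $\tilde{\lambda}_m\prec\tilde{\lambda}_{m+1}\prec\lambda'_{m+1}$ for some $\lambda'_{m+1}$ in the family of $\lambda_{m+1}$, yielding the required chain of families inside $\underline{\Lambda}^a_+$. The essential point your proposal is missing is this passage from composition-factor multiplicities to $\Ext^1$-groups, which is what makes the biadjointness of $E$ and $F$ actually bite at the crossing.
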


So we can choose a filtration on the poset $\Lambda$ by ideals such that the subsequent quotients
are families. This filtration (and the corresponding filtration on $\Cat$) will be called {\it family
filtrations}.

\begin{proof}
What we need to prove is that there are no different indexes $a_1,\ldots,a_k$ such that $\Lambda_{a_1}\prec\Lambda_{a_2}\prec\ldots \prec \Lambda_{a_k}\prec \Lambda_{a_1}$.

We will need some terminology. We say that elements
$\lambda_1,\lambda_2,\ldots, \lambda_n\in \Lambda$ form a {\it chain} if $\Ext^1(\Delta(\lambda_i), L(\lambda_{i+1}))\neq 0$
or $\Hom(\Delta(\lambda_i),\Delta(\lambda_{i+1}))\neq 0$
for all $i=1,\ldots,n-1$. We claim that $\lambda\prec\mu$ if and only if $\lambda$ and $\mu$
can be connected by a chain, i.e., there is a chain $\lambda_1,\ldots,\lambda_n$ with $\lambda_1=\lambda,\lambda_n=\mu$.
Indeed, it is enough to prove this claim when $\lambda\prec \mu$ but there are no elements between $\lambda$ and $\mu$.
The condition that $\Ext^1(\Delta(\lambda), L(\mu))=0$ now means that $\Delta(\lambda)$ does not occur in $P(\mu)$.
So, for a highest weight ordering on $\Cat$, we can take the coarser ordering where $\lambda$ is not comparable with $\mu$ and the other relations are the same
as in $\prec$.

We say that a chain is {\it tight} if it cannot be refined, i.e., there are no $i$ and $\nu$ with $\lambda_i\prec \nu\prec \lambda_{i+1}$.

We are going to prove our claim in the beginning of the proof using the
artinian induction on the hierarchy, such an induction works thanks to
(H2). To establish the induction step assume the contrary. Let us consider the decomposition $\Lambda=\Lambda_{<}\sqcup \underline{\Lambda}_-\sqcup\underline{\Lambda}_+\sqcup \Lambda_{>}$ corresponding to $\Lambda_{a_1}$. The decompositions for $\Lambda_{a_i}$ are the same for all $i$, otherwise some family will be strictly less than the others. We may assume that $\Lambda=\Lambda_=$.

To prove the induction step, it is enough to check that $\Lambda_a\prec \Lambda_b$ implies the existence of $a_1',\ldots,a_i'$
with $a_1'=a, a_k'=b$ and $\underline{\Lambda}_{a_1'}\prec \underline{\Lambda}_{a_2'}\prec\ldots\prec\underline{\Lambda}_{a_k'}$
(in $\underline{\Lambda}$). So pick $\lambda\in \Lambda_a, \mu\in \Lambda_b$ that can be connected by a chain. If both lie in $\underline{\Lambda}_+$ or both lie in $\underline{\Lambda}_-$, then we can use  the inductive assumption
(with the categorification $\Cat_+$ or $\Cat_-$). We only need to consider the case when
$\lambda\in \underline{\Lambda}_-$, while $\mu\in \underline{\Lambda}_+$.

Let us include $\lambda$ and $\mu$ into a tight chain $\lambda=\lambda_1,\lambda_2,\ldots,\lambda_n=\mu$.
Let $m$ be such that $\lambda_m\in \underline{\Lambda}_-, \lambda_{m+1}\in \underline{\Lambda}_+$.
If $\lambda_m,\lambda_{m+1}$ lie in the same family, say $\Lambda_c$, then we can just take $a_1'=a, a_2'=c,a_3'=b$.
So assume that they lie in  different families: $\lambda_{m}\in\Lambda_c$ and $\lambda_{m+1}\in \Lambda_d$.
Let $\tilde{\lambda}_m$ denote the element obtained from $\lambda_m$ by replacing the right-most $-$ with a $+$.
What we need to show is that there is an element $\lambda_{m+1}'\in \underline{\Lambda}_+\cap \Lambda_d$
with $\tilde{\lambda}_m\prec \lambda_{m+1}'$.

The top quotient of the standard filtration on $E\Delta(\tilde{\lambda}_m)$ is $\Delta(\lambda_m)$
and all other successive subquotients are  different from $\Delta(\lambda_{m+1})$ and have labels
in $\underline{\Lambda}_+$. So if $\Hom(\Delta(\lambda_m),\Delta(\lambda_{m+1}))\neq 0$, then
$0\neq\Hom(E\Delta(\tilde{\lambda}_m),\Delta(\lambda_{m+1}))=\Hom(\Delta(\tilde{\lambda}_m), F\Delta(\lambda_{m+1}))$.
Since the latter space is nonzero, we see that $\tilde{\lambda}_{m+1}\prec \lambda_{m+1}'$, where
$\Delta(\lambda_{m+1}')$ is one of the standard filtration quotients of $F\Delta(\lambda_{m+1})$.
But $\lambda_{m+1}'$ is in the same family as $\lambda_{m+1}$ and so we are done.

Now it remains to consider the case when $\Ext^1(\Delta(\lambda_m), L(\lambda_{m+1}))\neq 0$.
Suppose, first, that  $\Ext^1(\Delta(\tilde{\lambda}_m), FL(\lambda_{m+1}))=0$. Then $\Ext^1(E\Delta(\tilde{\lambda}_m), L(\lambda_{m+1}))= 0$. Let $M$ denote the kernel of the epimorphism $E\Delta(\tilde{\lambda}_m)\twoheadrightarrow \Delta(\lambda_m)$. Then we have
$\Hom(M, L(\lambda_{m+1}))\twoheadrightarrow \Ext^1(\Delta(\lambda_m),L(\lambda_{m+1}))$ meaning, in particular,
that $\Hom(M, L(\lambda_{m+1}))\neq 0$. In other words, $L(\lambda_{m+1})$ is in the head of $M$. Since $\operatorname{head}$
is a right exact functor and $M$ admits a filtration whose successive quotients are $\Delta(\lambda')$ with $\lambda'\in \Lambda_c$,
we get a contradiction with $\lambda_{m+1}\in \Lambda_d\neq \Lambda_c$.

So $\Ext^1(\Delta(\tilde{\lambda}_m), FL(\lambda_{m+1}))\neq 0$.
It follows that there is a simple constituent $L(\tilde{\lambda}_{m+1})$ of $FL(\lambda_{m+1})$ such that $\Ext^1(\Delta(\tilde{\lambda}_m), L(\tilde{\lambda}_{m+1}))\neq 0$. Therefore $\tilde{\lambda}_{m+1}\in \underline{\Lambda}_+$. Also we have a surjection $F\Delta(\lambda_{m+1})\twoheadrightarrow FL(\lambda_{m+1})$. This means, in particular, that there is $\lambda_{m+1}'\in \Lambda_d\cap \underline{\Lambda}_+$ (the label of a standard object in the standard filtration of $F\Delta(\lambda_{m+1})$) such that
$L(\tilde{\lambda}_{m+1})$ appears in the composition series of $\Delta(\lambda'_{m+1})$.
Therefore $\tilde{\lambda}_{m}\prec \tilde{\lambda}_{m+1}\prec \lambda_{m+1}'$ and so we are done.
\end{proof}

\begin{Rem}\label{Rem:label_order}
In fact, in all our examples one can choose an ordering, where the family filtration
is visible from the combinatorics. For example, the orderings on the poset $\mathcal{P}_\ell$
in Subsection \ref{SS_hier_ex} can be seen to have this property. It is still useful to have
a hierarchy structure defined as it was. It will be used in a subsequent paper, \cite{Cher_mult},
to define categorical actions on certain truncations of affine parabolic categories $\mathcal{O}$.
\end{Rem}

\section{Projective resolutions of standards in basic categorifications}\label{S_proj_resol}
\subsection{Main result}
Let $\Cat$ be a basic highest weight $\sl_2$-categorification with poset $\{+,-\}^n$
(in the sequel, we will call such $\Cat$ a {\it basic categorification of size $n$}).
The goal of this section is to determine a minimal projective resolution of a standard
$\Delta(t)$, equivalently, to compute $\Ext^i(\Delta(t),L(s))$ for all $i\geqslant 0,t,s\in \{+,-\}^n$.

Let us introduce the notion of a {\it division}  of $t$. By definition, a division $D$ consists of two subsets
$I_+,I_-\subset \{1,\ldots,n\}$ of {\it fixed} positions, and pairs $p^1,\ldots,p^k\subset \{1,\ldots,n\}$
subject to the following conditions:
\begin{itemize}
\item[(D1)] $\{1,\ldots,n\}=I_+\sqcup I_-\sqcup \bigsqcup_{i=1}^k p^i$.
\item[(D2)] For any $i_+\in I_+, i_-\in I_-$ we have $i_+<i_-$.
\item[(D3)] If $p^i=\{j,j'\}$, then exactly one of $t_j, t_{j'}$ is a $+$
(and the other is a $-$).
\item[(D4)] Let $p^i=\{j,j'\}$ with $j<j'$. Then on the interval $[j,j']$ there are no elements
of $I_\pm$. Moreover, if $p^{i_1}=\{j_1,j_1'\}$ is another pair, and $j_1\in [j,j']$, then $j_1'\in [j,j']$.
\end{itemize}

Graphically, a division is represented by a cup diagram of \cite{BS1}.

A pair $p^i=\{j,j'\}, j<j',$ is said to be {\it switchable} if $t_j=+, t_{j'}=-$. Let $s(D)$ be the total number
of switchable pairs in $D$. For $t\in \Lambda$ and its division $D$
we define $t^D\in \Lambda$ by switching +'s and $-$'s in all switchable pairs.
For example, consider $t=+-+--$. Then $I_+=\{1\}, I_-=\{2,5\}, p^1=\{3,4\}$ form a division $D_1$.
The pair $p^1$ is switchable, and $t^{D_1}=+--+-$. Another division $D_2$ of $t$ is, say, $I_1=\{1\},
I_2=\{4,5\}, p^1=\{2,3\}$. In this case, $t^{D_2}=t$.

Here is the main result of this section.

\begin{Thm}\label{Thm_proj_resol}
Let $P_\bullet=\ldots\rightarrow P_{i+1}\rightarrow P_i\rightarrow\ldots\rightarrow P_0$ be a minimal projective
resolution of $\Delta(t)$. Then, for any $i$, we have $P_i=\bigoplus_D P(t^D)$, where the sum is taken over
all divisions $D$ of $t$ with $s(D)=i$. In other words, $\Ext^i(\Delta(t), L(s))=1$ if and only if $s=t^D$
for  a division $D$ with $s(D)=i$, otherwise the ext vanishes.
\end{Thm}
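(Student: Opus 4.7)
The approach is induction on $n$, combined with the splitting construction of Section \ref{S_cat_spl} applied at the rightmost position. For $n=0$ the statement is immediate. For $n\geqslant 1$, Proposition \ref{Prop:reduced_cat} produces, from the splitting of $\Cat$, two basic $\sl_2$-categorifications $\Cat_-$ and $\Cat_+$ of size $n-1$ (equivalent as such via the functor $\Fun$ of Subsection 4.2), to which the inductive hypothesis applies.

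The argument would then split into cases according to $t_n$. \emph{Case $t=t'+$.} I would show that the minimal projective resolution of $\Delta(t)$ in $\Cat$ is obtained by applying $\pi^!$ termwise to the minimal projective resolution of $\Delta_+(t')$ in $\Cat_+$. The basic identifications $\pi^!\Delta_+(t')=\Delta(t)$ and $\pi^!P_+(s')=P(s'+)$ are already contained in the proofs of Lemmas \ref{Lem:Eun_stand}(1) and \ref{Lem:Eun}(2). To transfer exactness across $\pi^!$ I would use that projective resolutions consist of $\Delta$-filtered objects, together with the exactness of $\pi^!\pi$ on $\Cat^\Delta$ from Lemma \ref{Lem:adj_fun_morph}(2) and the identity $\pi\pi^!=\operatorname{id}$; minimality is automatic since $\pi^!$ is fully faithful on projectives (dual to Lemma \ref{Lem:Fun_proj}). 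Combined with the inductive hypothesis applied to $\Cat_+$, this reduces the case to a purely combinatorial claim: divisions $D$ of $t=t'+$ with $t^D\in\underline{\Lambda}_+$ (equivalently, with position $n$ either in $I_+$ or in a non-switchable pair $\{j,n\}$) are in a natural, $s$-preserving bijection with divisions $D'$ of $t'$.

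\emph{Case $t=t'-$.} This case is more subtle because $\Delta(t)$ lies in the Serre subcategory $\Cat_-$, yet its projective resolution involves projectives $P(s)$ with $s\in\underline{\Lambda}_+$ (whenever $D$ pairs position $n$ with some $j<n$ through a switchable pair). My plan is to use the short exact sequence
\[
0\to K\to E\Delta(t'+)\to\Delta(t)\to 0
\]
coming from the standard filtration of $E\Delta(t'+)$ (whose top quotient is $\Delta(t'-)=\Delta(t)$ by axiom (i)), where $K$ is $\Delta$-filtered with subquotients indexed by the elements of $\underline{\Lambda}_+$ obtained from $t'+$ by replacing some $+$ in $t'$ by a $-$. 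Case I already supplies the minimal projective resolutions of $\Delta(t'+)$ and of each subquotient of $K$; applying the exact, projective-preserving functor $E$ to the resolution of $\Delta(t'+)$ and combining with the short exact sequence via the long exact $\Ext$-sequence yields $\Ext^i(\Delta(t),L(s))$ for every $s$. The contributions should segregate naturally: the $E\Delta(t'+)$ side produces the summands $P(s)$ with $s\in\underline{\Lambda}_+$ (coming from divisions of $t$ in which position $n$ lies in a switchable pair), while the Serre subcategory $\Cat_-$ (with its own Case I applied inductively) accounts for the summands with $s\in\underline{\Lambda}_-$ (divisions with $n\in I_-$).

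\textbf{Main obstacle.} The hardest part will be Case II: controlling the long exact $\Ext$-sequence and extracting the precise pattern of cancellations that matches the claimed enumeration by divisions, together with the implicit uniqueness statement that for each pair $(i,s)$ there is at most one division $D$ of $t$ with $s(D)=i$ and $t^D=s$, so that the direct-sum decomposition of $P_i$ really matches the Ext dimensions. This uniqueness is combinatorial in nature and should follow from a careful analysis of the nesting and ordering constraints (D2) and (D4), naturally phrased through the cup diagrams of \cite{BS1}; verifying it alongside the homological cancellation in Case II is where I expect the main technical effort to go.
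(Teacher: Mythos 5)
Your skeleton does match the paper's: induction on $n$ through the splitting at the last position, the case $t'+$ handled by transporting the resolution along $\pi^!$ (this is Lemma \ref{Lem:ind_t+}), the $L(s-)$-part of the case $t'-$ handled inside $\Cat_-$ (Lemma \ref{Lem:ind_t-}), and the remaining part attacked through the exact sequence $0\rightarrow E_+\Delta(t'+)\rightarrow E\Delta(t'+)\rightarrow\Delta(t'-)\rightarrow 0$, whose associated double complex $E_+P_\bullet\rightarrow EP_\bullet$ is exactly the paper's tool. But the step you defer to "Case II" hides two genuine gaps, and the second one is not a bookkeeping issue.

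First, your claim that "Case I already supplies" everything needed to read off $\Ext^i(E\Delta(t'+),L(s))$ is not right: to use $EP_\bullet$ you must decompose each $EP(u+)$ into indecomposables, i.e.\ you need Proposition \ref{Prop:character}(ii). That statement is equivalent (by BGG reciprocity and adjunction) to the character formulas for the projectives $P(s-)$ of the ambient size-$n$ category, which are precisely what the induction hypothesis does \emph{not} give you: $P(s-)$ is the projective cover in $\Cat$, not $\iota P_-(s)$, and its $\Delta$-filtration involves standards from $\underline{\Lambda}_+$. The paper closes this loop only by importing the canonical-basis results of \cite{FKK} and, crucially, a crystal-theoretic argument (via \cite[Lemma 5.11]{CR} and \cite[Theorem 5.1]{cryst}, comparing $h_-$ of the relevant simples under the hypothesis $\wt(s)\leqslant 0$ after a duality reduction) to rule out $P(-s+)$ as a summand of $EP(+s+)$. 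Nothing in your outline supplies this input. Second, even granting the characters, the total complex of $E_+P_\bullet\rightarrow EP_\bullet$ is non-minimal, and the long exact sequence determines $\Ext^i(\Delta(t'-),L(s+))$ only up to the connecting maps; the theorem is equivalent to the statement (*) of Subsection 5.4 that a certain image of $\Hom(E_+P_d,L(s+))$ in $\Hom(E_+P_{d+1},L(s+))$ is not swallowed by the image of $\Hom(EP_{d+1},L(s+))$. You correctly identify this as the main obstacle but offer no mechanism. The paper's mechanism is a further reduction: for the problematic labels $s$ one shows $1\notin I_-(s)$ (again using $\wt\leqslant 0$), which permits stripping the initial segment $s_1,\ldots,s_{i-1}$ and identifying all the relevant Hom spaces with those in a basic categorification of strictly smaller size, where the induction hypothesis applies. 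Without an argument of this kind (or some substitute controlling the differentials), the proposed induction does not close.
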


In particular, this theorem implies the character formulas for the projectives (and hence for simples) in $\Cat$.

We remark that for given $s$ and $t$ the equality $s=t^D$ holds for at most one $D$ that can be determined as follows.
For $s\in \{+,-\}^n$ we can define its reduced form using the following procedure. On each step we take two
indices $a,b$ such that $s_a=-, s_b=+, s_{a+1}=\ldots=s_{b-1}=0$ and make $s_a,s_b$ equal $0$ (initially there are
no 0's). We finish with an $n$-tuple of $+,-,0$, where no $+$ appears to the right of a $-$.
Of course, this is the standard bracket cancelation recipe,  with a $-$ being a ``(", and a + being a ``)".
For $?=+,-$ let $I_?(s)$ denote the  set of all positions, where we have a $?$. The sets $I_+(s),I_-(s)$ are referred to as the {\it reduced form} of $s$.

The sets $I_+,I_-$ in the division $D$ constitute  the reduced form of $t^D$
and so are uniquely recovered from $t^D$. Any pair $p^i$ is located either between two consecutive elements of $I_+\sqcup I_-$
or to the left of the smallest element or to the right of the largest element.
So to recover the pairs in $D$ we may assume that $I_+=I_-=\varnothing$ and the reduced form of $s$ is empty (in particular,
$\wt(s)=0$, where $\wt(s)$, by definition, is the difference between the number of $-$'s and the number of $+$'s).
Set $I^+:=\{i: t_i=+, s_i=-\}, I^-:=\{i: t_i=-,s_i=+\}$. Of course, the cardinalities of $I^+$ and $I^-$
have to be the same. Also,  any pair $p^i$ in $D$ either has the left element in $I^+$  and the right element in
$I^-$ (this is precisely the case when a pair is switchable)
or has both elements outside $I^+\sqcup I^-$. Let us explain how to recover the pairs lying in $I^+\sqcup I^-$.
We pair the elements in $I^+$ with those in $I^-$ using the following recipe repeatedly: if we have elements $i\in I^+, i'\in I^-$
with $i<i'$ such that all elements of $I^+\sqcup I^-$ between $i,i'$ has already been paired, then we pair $i$ with $i'$. This is again the standard recipe of canceling brackets, now with a $+$ being a ``(", and a $-$ being a ``)".
Now let us explain how to pair the elements in $\{1,\ldots,n\}\setminus I^+\setminus I^-$. For an already constructed
pair $p=\{i,i'\}$ with $i\in I^+, i'\in I^-$ define a subset $I^p\subset\{i+1,\ldots,i'-1\}$
of all indexes that do not lie between the elements of any other pair $p'\subset (I^+\sqcup I^-)\cap \{i+1,\ldots,i'-1\}$.
Then, clearly, $I^+\sqcup I^-\sqcup\bigsqcup_{p}I^p=\{1,\ldots,n\}$. All remaining  pairs are contained in exactly one
$I^p$ so it is enough to explain the pairing in the case when $I^p=\{1,\ldots,n\}$. This is again the bracket cancelation
rule, this time with a $-$ being a ``(" and a $+$ being a ``)",  we just pair the two brackets corresponding to each other.

We see that this algorithm produces a unique division $D$ with $s=t^D$ (and if the algorithm fails -- the brackets
cannot be canceled -- then $s$ does not have the form $t^D$).

\subsection{Consequences of splitting}\label{SS_split_cons}
We are going to prove our claims by induction on the size  $n$ of a basic categorification.
In the proof  we will extensively use the
splitting results of  Section \ref{S_cat_spl}. Recall that we have a subcategory $\Cat_-\subset \Cat$
spanned by the simples of the form $L(t-)$ with the inclusion functor $\iota:\Cat_-\hookrightarrow \Cat$
and the quotient category $\Cat_+$ with the quotient functor $\pi:\Cat\twoheadrightarrow \Cat_+$.
Then the left adjoint  $\pi^!$ induces an equivalence of $\Cat_+^\Delta$ with the subcategory
of $\Cat^\Delta$ of all objects, whose successive filtration quotients have the form $\Delta(t+)$.
Also $\iota^!$ is an exact functor $\Cat^\Delta\twoheadrightarrow \Cat_-^\Delta$. We have an exact sequence
of functors $0\rightarrow \pi^!\pi\rightarrow \operatorname{id}\rightarrow \iota\iota^!\rightarrow 0$
on $\Cat^\Delta$, see Subsection \ref{SS_hw_reminder}.

\begin{Lem}\label{Lem:ind_t+}
If the claim of Theorem \ref{Thm_proj_resol} holds for $t\in \{+,-\}^{n-1}$, then it also holds for $t+$.
\end{Lem}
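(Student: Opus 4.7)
The plan is to reduce to the inductive hypothesis via the splitting of Section \ref{S_cat_spl}. Partition $\Lambda=\{+,-\}^n$ as $\underline{\Lambda}_+\sqcup\underline{\Lambda}_-$ according to the last symbol; by Subsection \ref{SS:cat_pm} the quotient $\Cat_+=\Cat/\Cat_-$ is a basic categorification of size $n-1$ with poset $\{+,-\}^{n-1}$, and the left adjoint $\pi^!$ of the quotient functor satisfies $\pi^!\Delta_+(s)=\Delta(s+)$ and $\pi^!P_+(s)=P(s+)$. By the inductive hypothesis applied to $\Cat_+$, the minimal projective resolution of $\Delta_+(t)$ has the form $Q_\bullet$ with $Q_i=\bigoplus_{D'}P_+(t^{D'})$, summing over divisions $D'$ of $t$ with $s(D')=i$.

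I would then show that $\pi^!Q_\bullet\to\Delta(t+)$ is the desired minimal projective resolution in $\Cat$. For exactness, each syzygy $K_i=\ker(Q_i\to Q_{i-1})$ is standardly filtered in $\Cat_+$: this is a standard highest weight category fact, since $\Cat_+^\Delta$ is closed under kernels of surjections and each $Q_i$ is $\Delta_+$-filtered. Hence every short exact sequence $0\to K_{i+1}\to Q_{i+1}\to K_i\to 0$ lies in $\Cat_+^\Delta$, and by Remark \ref{Rem:F_stand} (which gives the equivalence $\pi^!\colon\Cat_+^\Delta\xrightarrow{\sim}(\Cat^\Delta)_+$) it stays exact after applying $\pi^!$. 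Splicing yields exactness of $\pi^!Q_\bullet$ in $\Cat$.

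For minimality I would compute the relevant Ext groups using this resolution together with the adjunction $(\pi^!,\pi)$. Since $\pi L(s+)=L_+(s)$ and $\pi L(s-)=0$, adjunction gives
$$\Ext^i_\Cat(\Delta(t+),L(s+))=\Ext^i_{\Cat_+}(\Delta_+(t),L_+(s)),\qquad \Ext^i_\Cat(\Delta(t+),L(s-))=0.$$
By induction the first equals the multiplicity of $P_+(s)$, hence of $P(s+)$, in $Q_i$, while the second confirms that no $P(s-)$-summands are needed; so $\pi^!Q_\bullet$ is minimal.

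It remains to match $\bigoplus_{D'}P(t^{D'}+)$ with the expected $\bigoplus_{D}P((t+)^D)$, by constructing a bijection between divisions $D'$ of $t$ and divisions $D$ of $t+$ preserving $s$ and satisfying $(t+)^D=(t^{D'})+$. The two cases are: (a) $n\in I_+(D)$, which by (D2) forces $I_-(D)=\emptyset$ and is matched with divisions of $t$ having empty $I_-$; and (b) $n$ lies in a (necessarily non-switchable) pair $\{j,n\}$ with $t_j=-$, matched with divisions of $t$ by erasing this pair and adjoining $j$ to $I_-$ as its maximal element. Verifying well-definedness in both directions uses the nesting axiom (D4); this combinatorial check is the main obstacle, as everything else is a formal consequence of the splitting.
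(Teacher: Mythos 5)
Your proof is correct and follows essentially the same two-step strategy as the paper: transport the minimal resolution of $\Delta_+(t)$ from $\Cat_+$ to $\Cat$ via the equivalence $\pi^!$, then match the combinatorics of divisions. Your bijection (starting from a division of $t+$ and recovering one of $t$) is exactly the inverse of the paper's map $D\mapsto\tilde D$, and the brief appeal to the equivalence $\pi^!$ for exactness matches the paper's terse argument at the same level of detail.
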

\begin{proof}
From the  paragraph preceding the lemma,
it follows that we can treat a minimal projective resolution of
$\Delta(t)$ as that of $\Delta(t+)$.
So we only need to present a bijection between the divisions of $t$ and of $t+$ that preserves
the function $s$. Let $D=(I_+,I_-, p^1,\ldots,p^k)$ be a division of $t$. Define a division $\tilde{D}$
of $t+$ as follows. If $I_-=\varnothing$, set $\tilde{D}=(I_+\sqcup\{n+1\}, \varnothing, p^1,\ldots,p^k)$.
So suppose that $I_-\neq \varnothing$. Then let $j$ be the largest element of $I_-$. We set
$\tilde{D}=\{I_+, I_-\setminus \{j\}, p^1,\ldots,p^k, \{j,n+1\}\}$. It is easy to see that
$\tilde{D}$ is a division of $t+$. Moreover, it is not difficult to check that the map $D\mapsto \tilde{D}$
is a bijection between the sets of divisions and that this bijection preserves the function $s$.
\end{proof}

So it only remains to check the claim of Theorem \ref{Thm_proj_resol} for elements of the form $t-$.
We still have the full control over ``$\ldots-$-part''.

\begin{Lem}\label{Lem:ind_t-}
Suppose the claim of Theorem \ref{Thm_proj_resol} holds for $t\in \{+,-\}^{n-1}$. Let $P_\bullet$ be a minimal
resolution for $\Delta(t-)$. Then $\iota^!(P_\bullet)$ is a minimal resolution for $\Delta(t)(=\Delta_-(t))$.
In particular, the part of the minimal resolution of $\Delta(t-)$ consisting of the projectives of the form
$P(s-)$ is as specified in Theorem \ref{Thm_proj_resol}.
\end{Lem}
\begin{proof}
The claim that $\iota^!(P_\bullet)$ is a minimal projective resolution is equivalent to
$\Ext^i(\Delta(t-), L(s-))=\Ext^i(\Delta(t),L(s))$ for all $s$ and $i$. The latter follows
from Lemma \ref{Lem:hw_subcat}. To show the remaining statement it remains to produce a bijection
between the divisions of $t-$ with  $n\in I_-$ and the divisions of $t$.
Bijections just include/delete $n$ from $I_-$.
\end{proof}

Recall the equivalence  $\Eun:\Cat_+\rightarrow \Cat_-$. We also will write $\Eun$ for $\iota\circ \Eun\circ \pi:\Cat_+^\Delta
\rightarrow \Cat_-^\Delta$ so that $\Eun(\Delta(t+))=\Delta(t-)$. Next, we write $E_+=\pi^!\pi E$. As we have seen in the 
previous section, under the embedding $\Cat^\Delta_+\subset \Cat_+$ the functor $E_+$ corresponds to the categorification  
functor $\underline{E}$ on $\Cat_+$. So, for any $M\in \Cat_+^\Delta$, we have the exact sequence
\begin{equation}\label{eq:main_exact_proj}
0\rightarrow E_+M\rightarrow EM\rightarrow \Eun M\rightarrow 0.
\end{equation}
In particular, we have the exact sequence $0\rightarrow E_+ \Delta(t+)\rightarrow E\Delta(t+)\rightarrow \Delta(t-)\rightarrow 0$.
Let $P^+_\bullet$ denote the minimal projective resolution for $\Delta(t+)$. Then we can consider the double complex $E_+ P^+_\bullet\rightarrow E P^+_\bullet$, whose cone is a (usually non-minimal) projective resolution for $\Delta(t-)$.

Recall the involution $t\mapsto \bar{t}$ on $\Lambda=\{+,-\}^n$ and the naive dual highest weight $\sl_2$-\!\! categorification $\bar{\Cat}$.
We can view  $\bar{\Cat}_+$ as a subcategory in $\Cat$, it is a Serre subcategory spanned by the simples of the form
$L(+t)$. Similarly, we can view $\bar{\Cat}_-$ as a quotient of $\Cat$, the category $\bar{\Cat}_-^\Delta$ is identified
with the full subcategory in $\Cat^\Delta$ of all objects with a filtration whose successive quotients are of the form $\Delta(-t)$.

This discussion and Lemma \ref{Lem:ind_t-} have the following corollary.

\begin{Lem}\label{Lem:proj_bar}
Let $\Cat$ be a basic categorification of size $n$.
\begin{itemize}
\item Suppose that Theorem \ref{Thm_proj_resol} holds for $t\in \{+,-\}^n$ and the category $\bar{\Cat}$. Then it also
holds for $\bar{t}$ and the category $\Cat$.
\item Now suppose $t\in \{+,-\}^{n-1}$ and Theorem \ref{Thm_proj_resol} holds for any basic categorification of size $n-1$.
Then it also holds for $-t, t+\in \{+,-\}^n$ and the categorification $\Cat$.
\item
Further, if $P_\bullet$ is a minimal resolution for $\Delta(+t)$ in $\Cat$, then
$\bar{\iota}^!(P_\bullet)$ is a minimal resolution for $\Delta(t)$ in $\underline{\bar{\Cat}}$.
In particular, $\Ext^i(\Delta(+t),L(+s))=\Ext^i(\Delta(t),L(s))$ for all $s$.
\end{itemize}
\end{Lem}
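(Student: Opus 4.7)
The plan is to leverage the naive duality $\Cat\leftrightarrow\bar{\Cat}$ from Subsection~\ref{SS_hw_def} together with a combinatorial bijection between divisions induced by the involution $t\mapsto\bar t$, and then to mirror Lemmas~\ref{Lem:ind_t+} and~\ref{Lem:ind_t-} on the dual side.

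For the first bullet, I first observe that $\Cat$ and $\bar\Cat$ coincide as highest weight categories but their $\{+,-\}^n$-labelings differ via $\bar\sigma(t)=\sigma(\bar t)$, so $\Delta_{\bar\Cat}(t)=\Delta_\Cat(\bar t)$ and $L_{\bar\Cat}(s)=L_\Cat(\bar s)$. Theorem~\ref{Thm_proj_resol} applied to $\bar\Cat$ at $t$ therefore says that $\Ext^i(\Delta_\Cat(\bar t),L_\Cat(\bar s))$ is non-zero exactly when $\bar s=\overline{t^D}$ for a division $D$ of $t$ with $s(D)=i$, and I must match this with Theorem~\ref{Thm_proj_resol} applied to $\Cat$ at $\bar t$. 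The required bijection is explicit: to $D=(I_+,I_-,p^1,\ldots,p^k)$ I assign $\bar D=(\bar I_+,\bar I_-,\bar p^1,\ldots,\bar p^k)$ with $\bar I_+=\{n+1-j:j\in I_-\}$, $\bar I_-=\{n+1-j:j\in I_+\}$, and $\bar p^i=\{n+1-j',n+1-j\}$ for $p^i=\{j,j'\}$, $j<j'$. Axioms (D1)--(D4) transfer to $\bar D$ because position reversal preserves nesting and swaps ``before'' with ``after''. Crucially, $\bar p^i$ is switchable in $\bar t$ iff $\bar t_{n+1-j'}=+$ and $\bar t_{n+1-j}=-$, i.e.\ iff $t_{j'}=-$ and $t_j=+$, which is exactly the switchability of $p^i$ in $t$; so $s(\bar D)=s(D)$ and $\bar t^{\bar D}=\overline{t^D}$, and the first bullet follows.

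For the second bullet the case $s=t+$ is Lemma~\ref{Lem:ind_t+}: since $\Cat$ is basic of size $n$ and hence has a single family, $\Lambda_<=\Lambda_>=\varnothing$ and $\Cat_+$ is itself a basic categorification of size $n-1$, so the hypothesis of that lemma is met by assumption. The case $s=-t$ follows by applying the first bullet to $\bar t+$ in $\bar\Cat$, noting that $\overline{-t}=\bar t+$: Lemma~\ref{Lem:ind_t+} inside $\bar\Cat$ yields Theorem~\ref{Thm_proj_resol} for $\bar t+$ in $\bar\Cat$, which then transfers to $-t$ in $\Cat$ via the first bullet.

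The third bullet mirrors the proof of Lemma~\ref{Lem:ind_t-} with $\iota,\Cat_-$ replaced by $\bar\iota,\bar\Cat_+$. The dual of Lemma~\ref{Lem:adj_fun_morph} provides a short exact sequence $0\to\bar\pi^!\bar\pi\to\operatorname{id}\to\bar\iota\bar\iota^!\to 0$ on $\Cat^\Delta$, so $\bar\iota^!$ is exact on $\Cat^\Delta$ and maps projectives to projectives. Since $\Delta(+t)$ already lies in $\bar\Cat_+$ (its composition factors are $\leqslant +t$, hence all begin with $+$), we have $\bar\iota^!(\Delta(+t))=\Delta(t)$ and $\bar\iota^!(P_\bullet)$ is a projective resolution of $\Delta(t)$. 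Minimality is preserved because $\bar\iota^!$ commutes with $\head$ on $\Cat^\Delta$, by the same routine observation as in Lemma~\ref{Lem:ind_t-}; the $\Ext$-identity then follows termwise from $\Hom_\Cat(P,L(+s))=\Hom_{\bar\Cat_+}(\bar\iota^! P,L(s))$. The principal obstacle throughout is really just the bookkeeping in the first bullet, where one must simultaneously track position reversal and sign swap, verify the nesting axiom (D4) for $\bar D$, and confirm that switchability is preserved.
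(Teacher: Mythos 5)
Your proof is correct, and it takes exactly the route the paper intends: the paper states this lemma as an immediate corollary of the discussion of $\bar\Cat_\pm$ and Lemma~\ref{Lem:ind_t-}, and your write-up simply makes that implicit argument explicit. In particular, the division bijection $D\mapsto\bar D$ (position reversal combined with swapping $I_\pm$) with the verification that $s(\bar D)=s(D)$ and $\overline{t^D}=\bar t^{\bar D}$ is precisely the combinatorial content needed to translate Theorem~\ref{Thm_proj_resol} across the naive duality, and your treatment of the second and third bullets via $\bar\Cat_+$, the dual of Lemma~\ref{Lem:adj_fun_morph}, and the exactness of $\bar\iota^!$ on $\Cat^\Delta$ is the intended mirroring of Lemmas~\ref{Lem:ind_t+} and~\ref{Lem:ind_t-}.
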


\subsection{Character formulas}
Starting from this subsection we assume that Theorem \ref{Thm_proj_resol} is proved for all basic categorifications
of size $n'$ with $n'<n$. Our goal here is two-fold. First, we prove a result establishing  character formulas
for any categorification of size $n$. Second, we decompose the projectives $E P(t)$ into the sum of indecomposables.

For $j\in \{1,2,\ldots,n\}$ and $s\in \{+,-\}^n$,
set $h^j_\pm(s):= |I_\pm(s_j,\ldots,s_n)|$, where, recall, $I_+(s_j,\ldots,s_m),I_-(s_j,\ldots,s_m)\subset \{j,\ldots,m\}$
are the reduced signature of $(s_j,\ldots,s_n)$. 

\begin{Prop}\label{Prop:character}
Let $\Cat$ be a basic categorification of size $n$ and $t\in \{+,-\}^n$.
\begin{itemize}
\item[(i)]
On the level of the Grothendieck groups we have the equality predicted by Theorem \ref{Thm_proj_resol}. That is,
we have $[\Delta(t)]=\sum_{D} (-1)^{s(D)}[P(t^D)]$, where the sum is taken over all divisions $D$ of $t$.
\item[(ii)] Let $I$ be the set of all indexes $i$ such that $t_i=+$ and $h_+^{i+1}(t)=0$. For $i\in I$, let $t^i$ denote the element of $\Lambda= \{+,-\}^n$ obtained from $t$ by replacing the $+$ in the $i$th position by a $-$.  Then we have
\begin{equation}\label{eq:E_proj_decomp}
EP(t)=\bigoplus_{i\in I} P(t^i)^{\oplus (h^i_-(t)+1)}.
\end{equation}
\end{itemize}
\end{Prop}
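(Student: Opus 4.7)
My plan is to argue by induction on the size $n$, with the inductive hypothesis that Theorem \ref{Thm_proj_resol} holds at all sizes $n' < n$; the base $n=0$ is trivial. The two parts will be proved in a staggered manner at size $n$ to avoid circular dependencies.

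For part (i), I would first treat the cases $t=t'+$ and $t=-t'$ by transporting minimal projective resolutions through the splitting of Section \ref{S_cat_spl}. Concretely, $\pi^!:\Cat_+^\Delta\hookrightarrow\Cat^\Delta$ is exact on $\Cat_+^\Delta$ and sends indecomposable projectives and standards to indecomposable projectives and standards ($P_+(s)\mapsto P(s+)$, $\Delta_+(t')\mapsto\Delta(t'+)$), so it carries the minimal resolution of $\Delta_+(t')$ (known at size $n-1$) to a minimal resolution of $\Delta(t'+)$, and the bijection of Lemma \ref{Lem:ind_t+} matches the combinatorial shape. The case $t=-t'$ is parallel via the dual splitting of Remark \ref{Rem:splitting_duality} and Lemma \ref{Lem:proj_bar}(1). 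The remaining mixed case $t=+s-$ would be addressed by applying the exact sequence (\ref{eq:main_exact_proj}) to $M=\Delta(+s+)\in\Cat_+^\Delta$, yielding
$$[\Delta(+s-)]=[E\Delta(+s+)]-[E_+\Delta(+s+)]$$
in the Grothendieck group; expanding the right-hand side using the formula for $[\Delta(+s+)]$ (already settled, as $+s+$ ends in $+$) reduces matters to the characters of $EP(u)$ and $E_+P(u)$ for $u$ ending in $+$.

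For part (ii), since $E$ is biadjoint to the exact functor $F$, the object $EP(t)$ is projective and hence uniquely determined by its Grothendieck class. I would compute $[EP(t)]$ via the standard filtration of $P(t)$:
$$[EP(t)]=\sum_\mu(P(t):\Delta(\mu))[E\Delta(\mu)],$$
using BGG reciprocity $(P(t):\Delta(\mu))=[\nabla(\mu):L(t)]$ (where the latter multiplicities follow from (i) applied to $\Cat^{opp}$) and the explicit formula for $[E\Delta(\mu)]$ from the axioms of a basic categorification. Matching the resulting expression with $\sum_{i\in I}(h^i_-(t)+1)[P(t^i)]$ is a combinatorial identity on reduced signatures and divisions that can be checked directly.

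The principal obstacle is the circularity between (i) and (ii) in the mixed case: the expression for $[\Delta(+s-)]$ requires $[EP(u)]$ for $u$ ending in $+$, which is (ii) at size $n$. The resolution is to stratify the induction at size $n$: first establish (ii) for $t$ of the form $t'+$ or $-t'$ (using $E_+P(\lambda+)=\pi^!\underline{E}P_+(\lambda)$ together with (ii) at size $n-1$, and exploiting that $EP(t'+)$ is a projective pinned down by its class in the Grothendieck group via the sequence (\ref{eq:main_exact_proj})), then use this to complete (i) in the mixed case, and finally finish (ii) in the mixed case. The combinatorial matching at the end rests on the observation that a division of $+s-$ either leaves the final $-$ in $I_-$ or pairs it with an earlier index, which parallels the short-exact-sequence decomposition and makes the bookkeeping transparent.
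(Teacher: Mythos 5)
Your overall architecture (induction on $n$, disposing of the cases $t'+$ and $-t'$ via the splitting of Section \ref{S_cat_spl}, and reducing the mixed case $+s-$ to knowledge of $EP(u)$ and $E_+P(u)$ for $u$ ending in $+$) is the same as the paper's, but the step you rely on to break the circularity does not work as stated. You claim $EP(t'+)$ is ``pinned down by its class in the Grothendieck group'' via the sequence (\ref{eq:main_exact_proj}). The class is indeed computable: $[EP(t'+)]=[E_+P(t'+)]+[\iota P_-(t')]$, with both terms known from size $n-1$. But recovering the multiplicities of the indecomposable summands from this class requires expressing it in the basis $\{[P(s)]\}$, hence requires the characters of the indecomposable projectives of $\Cat$ at the target weight --- and these include projectives $P(v-)$ with $v$ beginning with $+$, exactly the mixed objects whose characters you have not yet established at that stage of your stratified induction. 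Already for $n=2$: $[EP(++)]=[\Delta(+-)]+[\Delta(-+)]$, and this class alone is consistent both with $EP(++)=P(+-)$ (if $[P(+-)]=[\Delta(+-)]+[\Delta(-+)]$) and with $EP(++)=P(+-)\oplus P(-+)$ (if $P(+-)=\Delta(+-)$). Distinguishing the two requires showing that $L(-+)$ does not occur in the head of $E\Delta(++)$, which is not Grothendieck-group information.

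This is precisely where the paper inserts two ingredients absent from your proposal. First, its Step 1 establishes the equivalence ``(i) at weight $w$ $\Leftrightarrow$ (ii) at weight $w-2$'' by identifying the classes $[P(t)]$ with Lusztig's canonical basis of $(\C^2)^{\otimes n}$ and invoking the Frenkel--Khovanov--Kirillov tensor-product formula for the dual canonical basis; this is the substance behind your ``combinatorial identity that can be checked directly'', and it is not a routine verification. Second, and more crucially, after reducing to $\wt(s)\leqslant 0$ via the duality $\Cat\mapsto\bar{\Cat}$, Steps 4--6 run an induction on $\wt(s)$ and prove directly that no projective of the form $P(-s'+)$ is a direct summand of $EP(+s+)$: Step 5 shows every summand $P(u+)$ of $EP(+s+)$ already occurs in $E_+P(+s+)$ (using the intrinsic description of $E_+M$ as a canonical filtration component of $EM$), and Step 6 excludes $L(-s+)$ from the head of $EP(+s+)$ by comparing the crystal invariant $h_-$ of the simples in the head, via \cite[Lemma 5.11]{CR} and \cite[Theorem 5.1]{cryst}. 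Without an argument of this kind --- some input beyond Euler characteristics that controls the head of $EP(+s+)$ --- your induction does not close.
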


\begin{Rem}\label{Rem:part2_equiv}
The statement of part (ii) has several equivalent reformulations. First, applied to $\bar{\Cat}$, it gives
\begin{equation}\label{eq:F_proj_decomp}
FP(t)=\bigoplus_{i\in \bar{I}}P(\bar{t}^i)^{h_+(t_1,\ldots,t_i)+1},\end{equation} where $\bar{I}$ consists of all indices
$i$ such that $t_i=-, h_-(t_1,\ldots,t_{i-1})=0$, and $\bar{t}^i$ is obtained from $t$ by switching the $i$th
component from $-$ to $+$. Also, thanks to the biadjointness of $E,F$, we see
that (\ref{eq:E_proj_decomp}) is equivalent to the claim that the simple subquotients of
$FL(t)$ are $L(\bar{t}^i)$, where $i$ is such that $t_i=-, h_+^{i+1}(t)=0$, with multiplicity $h_-^{i+1}(t)+1$.
Similarly,  the irreducible subquotients of $E L(t)$ are precisely $L(t^i)$ with $t_i=+, h_-(t_1,\ldots,t_{i-1})=0$, with multiplicity $h_+(t_1,\ldots,t_{i-1})+1$.
\end{Rem}

\begin{proof} We proceed in several steps.

{\it Step 1.} We claim that proving (i) amounts to proving (ii) and vice versa. More precisely, (i) holds for all elements
$t$ with $\wt(t)=w$ if and only if (\ref{eq:E_proj_decomp}) holds for all $t$ with $\wt(t)=w-2$. Let us prove this.

(i) is equivalent to saying that the classes $[P(t)]$ constitute Lusztig's canonical basis of
the tensor product $(\Q^2)^{\otimes n}$ (with factors ordered right to left).
To see this we notice that the coefficients of in the expression of $\Delta$'s via $P$'s in (i)
are values at $1$ of the corresponding parabolic Kazhdan-Lusztig polynomials, compare with \cite{BS1}.

Using the observation in the previous paragraph, we show that (i) implies (ii).
The classes $[P(t)]$ form a canonical basis if and only if  the classes $[L(t)]$ form the dual canonical basis
(with tensor factors ordered left to right).
Let $L_t\in (\Q^2)^{\otimes n}$ denote the element of  the dual canonical basis corresponding to $t$
(as in \cite{FKK} but we use the opposite sign convention). We will use the inductive description of the dual
canonical basis elements given in \cite[Theorem 3.1]{FKK} to show that $f L_t$ is decomposed
in the dual canonical basis as described in Remark \ref{Rem:part2_equiv}, this will show that (i) implies (ii). We will argue
by induction on the size $n$.  If $t=t(1)-+t(2)$, where the length of $t(1)$ is $k$, then, according to \cite[Theorem 3.1]{FKK},
$L_t=\iota_k(L_{t(1)t(2)})$, where $\iota_k:(\Q^2)^{\otimes n-2}\rightarrow (\Q^2)^{\otimes n}$
is the map defined by $\iota(u_1\otimes u_2)=u_1\otimes (v_-\otimes v_+-v_+\otimes v_-)\otimes u_2$ with $u_1\in (\Q^2)^{\otimes k}$.
So $f L_t=\iota_k (f L_{t(1)t(2)})$. It follows that the multiplicity of $L_{s(1)s(2)}$ in $f L_{t(1)t(2)}$
equals that of $L_{s(1)-+s(2)}$ in $fL_{t(1)-+t(2)}$. The set of indexes $i$ such that $t_i=-$ and $h_+^{i+1}(t)=0$
for $t$ is in a natural bijection with such a set for $t(1)t(2)$ (a part of the indexes stays the same,
while the other is decreased by $2$) and the bijection preserves the functions $h_-^{i+1}$.
So if the decomposition specified in Remark \ref{Rem:part2_equiv} holds for $fL_{t(1)t(2)}$,
then it also holds for $fL_{t(1)-+t(2)}$.


This reduces  the computation of  the decomposition of $f L_t$
to the case when $t=+\ldots+-\ldots-$ (say, $k$ pluses). In this case \cite[Theorem 3.1]{FKK} implies that
$L_t=v_+^{\otimes k}\otimes v_-^{\otimes n-k}$. We have $\bar{t}^i=+\ldots+-\ldots-+-\ldots-$ (with a $+$
on the position $k+i$). The multiplicity of $L_{\bar{t}^i}$ in $fL_{t}$ predicted by Remark \ref{Rem:part2_equiv}
equals $n+1-k-i$. We have $L_{\bar{t}^1}=v_+^{\otimes k+1}\otimes v_-^{\otimes n-k-1}$ and we can compute
$L_{\bar{t}^i}$ for $i>1$ as described in the previous paragraph: we get
$$L_{\bar{t}^i}=v_+^{\otimes k}\otimes v_-^{\otimes i-1}\otimes v_+\otimes v_-^{n-k-i}-v_+^{\otimes k}\otimes v_-^{\otimes i-2}\otimes v_+\otimes v_-^{n-k-i+1}.$$
An easy computation shows that indeed $f L_t=\sum_{i}(n+1-k-i)L_{\bar{t}^i}$. We have checked that
(i) implies (ii).

Let us now show that (ii) implies (i). By our inductive assumption, (i) and the analog of (ii) hold in
$\Cat_+$ (and also in $\Cat_-$). So (ii) (applied to
$s+\in \underline{\Lambda}_+$) uniquely determines the character of $P(s-)$. As the previous two paragraphs show,
(i) for $\Cat_-$ together with (ii) implies that the classes of $L(t)$ in $(\Q^2)^{\otimes n}$ form the dual
canonical basis. Therefore the  elements $[P(t)]\in (\Q^2)^{\otimes n}$ are forced to be the elements of
Lusztig's canonical basis. As we have remarked above, this is equivalent to (i).

{\it Step 2.} Also let us point out that (i) for $t$ and $\Cat$ is equivalent to (i) for $\bar{t}$ and $\bar{\Cat}$.
Since $\wt(\bar{t})=-\wt(t)$, it is enough to establish (i) for all $t$ with $\wt(t)\leqslant 0$, i.e., when the number
of $+$'s is bigger than or equal to the number of $-$'s.

{\it Step 3.} Thanks to the results of Subsection \ref{SS_split_cons}, (i) holds for all elements $t$ of the form $-s$ and $s+$
with $s\in \{+,-\}^{n-1}$. So it remains to prove (i) for all $t$ of the form $+s-, s\in \{+,-\}^{n-2}, \wt(s)\leqslant 0$.

{\it Step 4.} We will prove (i) for the elements of the form $+s-$ and (ii) for the elements
of the form $+s+$ with $\wt(s)=w$ by using the increasing induction on $w$.
We will use the observation that if $P_\bullet$ is the projective resolution
for $\Delta(+s+)$, then the cone of $E_+P_\bullet\rightarrow E P_\bullet$
is a projective resolution of $\Delta(+s-)$.

The induction base is (ii) for $w=2-n$.  Here we have $P(+\ldots+)=\Delta(+\ldots+)=L(+\ldots+)$,
$P(+\ldots+-)=E\Delta(+\ldots+)$ (indeed, the right hand side has simple head equal to $L(+\ldots+-)$, 
this follows, for example, from the main result of \cite{cryst} and the equality $\Delta(+\ldots+)=L(+\ldots+)$).
Steps 1,3 together imply that (i) for $\wt(s)=w$ is equivalent to (ii) for $\wt(s)=w-2$.

Let $P(u-)$ be a direct summand of $EP(+s+)$. Then $u=+s$ and this summand occurs with multiplicity 1.
This follows from the claim that the functor $\mathcal{E}: \Cat_+\rightarrow \Cat_-$, see Subsection \ref{SS_Eun},
is a category equivalence.

Assume, for a moment, that $P(-s'+)$ is not a direct summand of $EP(+s+)$ for any $s',s$.
Then all direct summands of $EP(+s+)$ are of the form $P(+u)$. The multiplicity of $P(+u)$
in $EP(+s+)$ equals $\dim \Hom(E P(+s+), L(+u))=\dim \Hom(P(+s+),FL(+u))$ and the latter equals
to the multiplicity of $L(+s+)$ in $F L(+u)$. But both objects lie in $\bar{\Cat}_+$ that is closed
with respect to $F$. So we conclude that the multiplicity of $P(+u)$ in $EP(+s+)$ equals to the multiplicity
of $P(u)$ in $EP(s+)$. The latter is given by (ii) thanks to the inductive assumption. So (ii) for $+s+$ follows.
The class of $E_+ P_\bullet$ is as predicted by (ii) by our assumptions on $n$. The observation on
a projective resolution of $\Delta(+s-)$ above implies (i) for $+s-$. So it remains to show that
$P(-s'+)$ is not a direct summand of $EP(+s+)$.


{\it Step 5.} Let us show that every indecomposable projective of the form $P(t+)$ that appears in
$E P(+s+)$ also appears in $E_+ P(+s+)$. Choose any filtration on $EP(+s+)$ whose successive quotients
are standards ordered in the increasing order from top to bottom. Then $E_+ P(+s+)=\pi^!\pi (E P(+s+))$ is just the maximal filtration
 component of $EP(+s+)$ lying in $\Cat^\Delta_+$ and this
description of $E_+ P(+s+)$ is independent of the choice of a filtration on $EP(+s+)$, see Lemma \ref{Lem:hw_quot}.
This independence implies the
claim, since $P(t+)$ is in $\Cat_+^\Delta$.

{\it Step 6.} Step 5 implies that the only  simple of the form $L(-s'+)$ in the head of $E P(+s+)$ also lies in
 the head of $E_+ P(+s+)$. But the inductive assumption describes all simples in the latter and the only simple
 of the form $L(-s'+)$ that may appear there is $L(-s+)$.
 So we only need to show that $L(-s+)$ never appears in the head of $E P(+s+)$,
as long as $\wt(s)\leqslant 0$. The top of $P(+s+)$ is the simple $L(+s+)$, and we have $h_-(L(+s+))=\max(h_-(s)-1,0)$.
Recall that for an element $c$ of an $\sl_2$-crystal $C$ with crystal operators $\tilde{e},\tilde{f}$  we write $h_-(c)$
for the maximal number $N$ such that $\tilde{f}^N c\neq 0$; the number $h_+(c)$ is defined similarly but for $\tilde{e}$
instead of $\tilde{f}$. In the crystal $\{+,-\}^n$ (see, e.g., \cite[Example 2.4]{cryst}) we have $h_\pm(c)=h^1_\pm(c)$.
We remark that $\wt(c)=h_-(c)-h_+(c)$.

By \cite[Lemma 5.11]{CR}, for all simples $L$ in the head of $E P(+s+)$, we have $h_-(L)=h_-(L(+s+))+1$.  \cite[Theorem 5.1]{cryst} implies that $h_-(L(+s+))+1=h_-(+s+)+1=\max(h_-(s),1)$.
So it remains to show that $h_-(L(-s+))<\max(h_-(s),1)$ and this is where we are going to use the assumption $w=\wt(s)\leqslant 0$.
We have $h_-(s)-h_+(s)=w$, so either $h_+(s)>0$ or $h_+(s)=h_-(s)=0$. If $h_+(s)>0$, then the first $-$ in $-s+$ does not
survive in the reduced form, hence $h_-(-s+)=h_-(+s+)=\max(h_-(s)-1,0)<\max(h_-(s),1)$. If $h_+(s)=h_-(s)=0$, then
$h_-(-s+)=0<1=\max(h_-(s),1)$. We get a contradiction either way. This completes the proof of the proposition.
\end{proof}

\begin{Rem}
Part (i) of the previous proposition gives the multiplicities of simples in standards in a basic categorification.
Thanks to the existence of a family filtration, see Subsection \ref{SS_family_filtr}, this yields formulas for $[\Delta(\lambda):L(\mu)]$ when $\lambda$ and $\mu$ are in the same family
for an arbitrary highest weight $\sl_2$-categorification. These formulas generalize Theorem 1.10 in \cite{Kleshchev1}.
Similarly, the formulas for $[FL(t)],[EL(t)]$ in Remark \ref{Rem:part2_equiv} generalize (iv) of Theorems B,B'
in \cite{BK_functors}.
\end{Rem}

\subsection{Equivalent formulation}
Thanks to Lemma \ref{Lem:ind_t-} we know the part of a minimal projective resolution of $\Delta(t-)$ consisting of
the projectives of the form $P(s-)$.
Now we want to describe the occurrences of a given projective $P(s+),s\in \{+,-\}^{n-1}$ in the double complex
$E_+ P_\bullet\rightarrow E P_\bullet$, where $P_\bullet$ is a minimal projective resolution of $\Delta(t+)$.
We will see that, whenever $P(s+)$ occurs (with a nonzero multiplicity) in the double complex, exactly one of
the following two options takes place.
\begin{itemize}
\item[(A)] $s+$ has the form $(t-)^D$ for some division $D$ of $t-$. Then $P(s+)$ occurs only in the homological degree
$s(D)-1$ in  $E_+ P_\bullet$ with some multiplicity $m$ and (if $m>1$) also occurs  in the homological degree
$s(D)-1$ in $E P(t)$ with multiplicity $m-1$. Moreover, the corresponding map $\Hom(E P_{s(D)-1}, L(s+))\rightarrow
\Hom(E_+ P_{s(D)-1}, L(s+))$ is injective.
\item[(B)] $s+$ does not have the form in (A) (but $P(s+)$ still occurs in $E_+ P_\bullet$). Then there is $d$ such that
$P(s+)$ occurs only in  $E_+ P_{d}, E_+ P_{d+1}$ with multiplicities $m,m-1$, where $m>1$.
Also $P(s+)$ occurs with multiplicities $m-1,m-2$ in $E P_d, E P_{d+1}$, respectively. The maps $\Hom(E P_{i}, L(s+))\rightarrow
\Hom(E_+ P_{i}, L(s+))$ are injective for both $i=d,d+1$.
\end{itemize}

Modulo the previous claim, Theorem \ref{Thm_proj_resol}, for a given element $t-\in \{+,-\}^n$ and any basic categorification $\Cat$,
is equivalent to the following statement
\begin{itemize}
\item[(*)] For any $s$ as in (B) the image of the  map $\Hom(E_+ P_{d}, L(s+))\rightarrow \Hom(E_+ P_{d+1}, L(s+))$
is not contained in the image of the map $\Hom(E P_{d+1}, L(s+))\rightarrow \Hom(E_+ P_{d+1}, L(s+))$.
\end{itemize}

Indeed, from (A) it follows that if $s+=(t-)^{D}$, then $\Ext^i(\Delta(t-), L(s+))=\C$ if $i=s(D)$ and 0 else.
On the other hand, modulo (B) the claim (*) is equivalent to $\Ext^i(\Delta(t-), L(s+))=0$ for all other $s$'s.

To establish that exactly one of (A) and (B) holds we need to show that the following holds:
\begin{itemize}
\item If $s$ is as in (A), then there is only one indecomposable projective $P(s'+)$ in the complex $P_\bullet$
such that $P(s+)$ is a summand of $E_+ P(s'+)$. The projective $P(s'+)$ occurs in homological degree $s(D)-1$.
\item Suppose $s$ is not as in (A) but  takes the form $(t^D)^i$ for some division $D$ of $t$ with $t^D_i=+$ and
$h_+^{i+1}(t^D)=0$. Then there are two possible divisions $D=D_+,D_-$ with this property. They satisfy $s(D_-)=s(D_+)+1$.
The multiplicity of $P(s+)$ in $E_+ P\left(t^{D_+}+\right)$ is bigger by $1$ than the multiplicity in
$E_+ P\left( t^{D_-}+\right)$.
\end{itemize}
The claim comparing the multiplicities in $E_+ P_\bullet$ and $E P_\bullet$ is (ii) of Proposition \ref{Prop:character},
while the injectivity of the corresponding maps in (A),(B) was basically established in Step 5 of the proof of that
proposition. Also it is clear that any label $s$ such that $P(s+)$ appears in $E_+ P_\bullet$ has the form
$(t^D)^i$ with $i$ subject to $t^D_i=+, h^{i+1}_+(t^D)=0$.

%

We proceed to describing divisions $D$ giving the same $(t^D)^i$ (with different $i$). 

Let a division $D$ of $t$ be of the form $(I_+,I_-,p^1,\ldots, p^k)$. We say that a pair $p^i=(j,j'), j<j'$ is {\it external} if
there is no pair $p^{i_1}=(j_1,j_1')$ with $j_1<j<j'< j_1'$. The condition $h^{i+1}_+(t^D)=0$ holds if and only if
$i$ is the largest element of $I_+$ or the larger element in an external pair $p$ that lies to
the right  of $I_+$. In the first case, the reduced form $I_\pm\left((t^D)^i\right)$
satisfies $I_+\left((t^D)^i\right)=I_+\setminus\{i\}, I_-\left((t^D)^i\right)=I_-\sqcup\{i\}$.
In the second case,  $I_+\left((t^D)^i\right)=I_+$ and $I_-\left((t^D)^i\right)=I_-\sqcup p$.

By the previous paragraph, $I_+(s)\subset I_+$ and
$I_-\subset I_-(s)$. Furthermore,
\begin{enumerate}\item either $I_+$ is obtained from
$I_+(s)$ by adding the smallest element of $I_-(s)$ \item or coincides with $I_+(s)$.
\end{enumerate}
In the first case the set $I_-$ is obtained from $I_-(s)$ by deleting the smallest element.
In the second case, $I_-$ is obtained from $I_-(s)$ by deleting two consecutive elements that
form a pair in $D$.

Now let $D_l=(I_{+,l},I_{-,l},p_l^1,\ldots,p_l^{k_l}),l=1,2,$ be two different divisions such that $s$ is obtained from both $t^{D_1},t^{D_2}$ by replacing a suitable $+$ with a $-$.  We cannot have $I_{-,1}=I_{-,2}$. Indeed, the previous analysis implies that in this case $t^{D_1}=t^{D_2}$ which, in turn, implies $D_1=D_2$.

Further,  we claim that \begin{equation}\label{eq:ineq_I}I_{-,1}\cup I_{-,2}\neq I_-(s).\end{equation} Assume the converse.
We have $t_i=-$ for any $i\in I_-(s)$ because $t_i=-$ for any $i\in I_{-,l}, l=1,2$. Also we see that for any two consecutive
elements $j,j'$ of $I_+(s)\sqcup I_-(s)$ we have $\wt(t_{j+1},\ldots, t_{j'-1})=0$ because there is $l=1,2$
such that $j,j'$ are consecutive elements of $ I_{+,l}\sqcup I_{-,l}$.
Finally, we can assume that $I_{+,1}=I_+(s)$. So we see that $\wt(t)=|I_{-,1}|-|I_{+,1}|$,
while the preceding discussion shows that $\wt(t)=|I_-(s)|-|I_+(s)|$, a contradiction.

We deduce from (\ref{eq:ineq_I}) that there may be no more than two divisions $D_1,D_2$ such that $s$ is obtained from $t^{D_1},t^{D_2}$.
We have only one $D$ if and only if $t_i=+$ for the largest element $i$ in $I_-(s)$. This is precisely the
case when $s+=(t-)^{D'}$ for some division $D'$ of $t-$. More precisely, if $D=(I_+,I_-, p^1,\ldots,p^k)$,
then $D'$ equals $(I_+\setminus \{i\},I_-\setminus \{i\},p^1,\ldots,p^k, \{i,n\})$.

Now consider the  case when $t_i=+$ for some non-maximal
element $i$ of $I_-(s)$. We have two divisions $D_+,D_-$ such that $s$ is obtained from $t^{D_+},t^{D_-}$
and $s(D_+)=s(D_-)-1$. We remark that $t^{D_+}$ and $t^{D_-}$ are different just in 2 positions that are
elements of $I_-(s)$ (and one of these positions is $i$). Moreover, $t^{D_-}=(t^{D_+})^{D_0}$, where $D_0$ is a division of $t^{D_+}$
with $s(D_0)=1$: the division $D_0$ coincides with $D_-$ viewed as a division of $t^{D_+}$.

This completes the proof of the claim that exactly one of (A) and (B) holds.

%
%

\subsection{Proof of the main theorem}
Recall that it is enough to prove Theorem \ref{Thm_proj_resol} or, equivalently, the claim (*) for
an element  of the form $+t-\in \{+,-\}^n$, where $t\in \{+,-\}^{n-2}$ has weight $\wt(t)\leqslant 0$.

Let $s\in \{+,-\}^{n-1}$ be as in (B). We claim that $1\not\in I_-(s)$. Assume the contrary.
Let $j$ be the smallest element of $I_-(s)$ larger than $1$.
It follows that $h_+(s_2,\ldots,s_{j-1})=h_-(s_2,\ldots,s_{j-1})=0$. Also $h_+(s_{j+1},\ldots,s_{n-1})=0$
because $j\in I_-(s)$.
So we see that $\wt(s)\geqslant 2$ (the minuses on the positions $1,j$ make the impact of $2$).
It follows that $\wt(s+)>0$. But $\wt(s+)=\wt(+t-)=\wt(t)\leqslant 0$, a contradiction.

So let $i>1$ be the minimal element of $I_-(s)$. We remark that $s_l=t^{D_\pm}_l$ for $l<i$, where the divisions $D_{\pm}$
of $t$ were introduced in the previous subsection. For an element $s'\in \{+,-\}^{n-1}$ with
$s'_1=s_1, \ldots, s'_{i-1}=s_{i-1}$ we set $\underline{s}':=(s'_i,\ldots, s'_{n-1})$. Also for a division $D$ of $s'$ we write
$\underline{D}$ for the induced division of $\underline{s}'$ provided the latter makes sense. Let us write $\underline{E},\underline{E}_+$
for the categorification functors for  basic categories of  sizes $n-i, n-i-1$.  To prove (*) it is enough to show that
there are identifications $\Hom(E_+ P(t^{D_\pm}+), L(s+))\xrightarrow{\sim} \Hom(\underline{E}_+ P(\underline{t}^{\underline{D}_\pm}+), L(\underline{s}+)),\Hom(E P(t^{D_\pm}+), L(s+))\xrightarrow{\sim} \Hom(\underline{E} P(\underline{t}^{\underline{D}_\pm}+), L(\underline{s}+))$  that intertwine the natural maps between the Hom spaces. Indeed, then (*) will follow from the
induction assumptions on $n$.

The claim boils down to showing the following: take a simple $L(s)$
and a standardly filtered object $M$. Assume that if $s_1=-$, then $M\in \bar{\Cat}_-$.
Let $\underline{E}$ denote the functor induced by $E$
on $\bar{\Cat}_{s_1}^{\Delta}$. Let $s'$ be defined by $s=s_1s'$. Then $\Hom(\underline{E}M, L(s'))$ is naturally identified with
$\Hom(E M, L(s))$. The claim follows from the observation that $\underline{E}M$ is a quotient of $EM$
and there are no nonzero homomorphisms from the kernel to $L(s)$. Applying an easy induction we
prove a generalization of the previous claim to any starting sequence in $s$. That claim and the naturality
of the identification now imply the statement in the previous paragraph.

The proof of Theorem \ref{Thm_proj_resol} is now complete.

\subsection{Applications to arbitrary categorifications}
First, we can get some information about  the modules $E P(\lambda), F P(\lambda)$ generalizing
Proposition \ref{Prop:character}, (ii).

\begin{Prop}\label{Prop:proj_arb}
Let $\Cat$ be a highest weight $\sl_2$-categorification with respect to a hierarchy structure on a poset $\Lambda$.
Pick a family $\Lambda_a$ and $\lambda\in \Lambda_a$. Set $t:=\sigma_a^{-1}(\lambda)$. Then $EP(\lambda)$ contains
$P(\sigma_a(t^i))$ as a direct summand with multiplicity  $h_-^i(t)+1$ for all indices $i$ such that
$t_i=+, h_+^{i+1}(t)=0$ (and does not contain $P(\sigma_a(s))$ for the other elements $s$).
Here, as before, $t^i$ stands for the element of $\{+,-\}^{n_a}$ such that
$t^i_j=t_j$ for $j\neq i$ and $t^i_i=-$. Similarly, $FP(\lambda)$ contains $P(\sigma_a(\bar{t}^i))$
with multiplicity $h_+(t_1,\ldots,t_i)+1$ for all indices $i$ such that $h_-(t_1,\ldots,t_{i-1})=0$
and $t_i=-$. Here $\bar{t}^i_j=t_j$ for $j\neq i$ and $\bar{t}^i_i=+$.
\end{Prop}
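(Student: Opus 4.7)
The plan is to reduce Proposition \ref{Prop:proj_arb} to Proposition \ref{Prop:character}(ii) in two stages: first restrict attention to the single family $\Lambda_a$ via the family filtration, then identify the resulting subquotient with a basic categorification of size $n_a$.

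First, Proposition \ref{Prop:filtration} produces a family filtration of $\Cat$, giving Serre subcategories $\Cat'' \subset \Cat' \subset \Cat$ preserved by $E$ and $F$, with subquotient $\Cat_a := \Cat'/\Cat''$ a highest weight $\sl_2$-categorification on $\Lambda_a$. For $\mu, \lambda \in \Lambda_a$, the multiplicity of $P(\mu)$ as a direct summand of $EP(\lambda)$ is preserved under this reduction: the Serre quotient functor $\pi_a$ commutes with $E$ (as $E$ preserves $\Cat'$), sends $P(\lambda)$ to the indecomposable projective $P_a(\lambda)$, and preserves $\dim\Hom(-, L_a(\mu))$ since $L_a(\mu)$ lies outside $\Cat''$. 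Hence we may assume $\Lambda = \Lambda_a$.

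Second, I would argue that $\Cat_a$ is a basic categorification of size $n_a$. Via $\sigma_a$ the poset $\Lambda_a$ is identified with $\{+,-\}^{n_a}$, and since $\sigma_a^{-1}$ is increasing the inherited order on $\Lambda_a$ is coarser than dominance. Refining to the dominance order preserves the highest weight structure because any composition factor $L(\mu)$ of $\Delta(\lambda)$ already satisfies $\mu \leq \lambda$ in the inherited order, hence $\sigma_a^{-1}(\mu) \leq \sigma_a^{-1}(\lambda)$ in dominance, so the standards themselves are unchanged by the refinement. The categorification axioms (i), (ii) of Subsection \ref{SS_hw_def} and the conditions (iii$^?$) transfer directly from $\Cat$. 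Applying Proposition \ref{Prop:character}(ii) to this basic categorification then yields the claimed multiplicities for $EP(\lambda)$. The statement for $FP(\lambda)$ follows by running the same argument on the naive dual $\bar\Cat$ of Subsection \ref{SS_hw_def}, which swaps $E$ and $F$ and converts the hierarchy into its dual.

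The main obstacle is the second step, where one must verify that refining the order to dominance genuinely leaves the standards unchanged; equivalently, that $\Delta(\lambda)$ in $\Cat_a$ is already the maximal quotient of $P_a(\lambda)$ with dominance-smaller composition factors, not merely one with inherited-order-smaller factors. Ruling out composition factors $L(\mu)$ of $P_a(\lambda)$ with $\sigma_a^{-1}(\mu) < \sigma_a^{-1}(\lambda)$ but $\mu$ incomparable to $\lambda$ in the inherited order is the delicate point. If this verification proves subtle, the alternative is a direct induction on $n_a$ via the splitting construction of Section \ref{S_cat_spl}, reproducing in the single-family setting the strategy used in Subsection \ref{SS_split_cons} to prove Proposition \ref{Prop:character}.
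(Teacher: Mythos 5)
Your overall strategy---reduce to the single family $\Lambda_a$ via the family filtration, identify the resulting subquotient with a basic categorification, and then invoke Proposition \ref{Prop:character}(ii)---is precisely the paper's. However, the reduction as you phrase it has a type error: you take $\Cat''\subset\Cat'\subset\Cat$ from the family filtration, set $\Cat_a=\Cat'/\Cat''$, and apply the Serre quotient functor $\pi_a:\Cat'\to\Cat_a$ to $P(\lambda)$, claiming $\pi_a(P(\lambda))=P_a(\lambda)$. But $P(\lambda)$ does not lie in $\Cat'$ in general: $\Cat'$ is the Serre subcategory corresponding to the ideal of families $\leqslant\Lambda_a$, while $P(\lambda)$ in $\Cat$ has a $\Delta$-filtration containing $\Delta(\nu)$ with $\nu$ in families strictly above $\Lambda_a$, hence has composition factors outside $\Cat'$. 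So $\pi_a(P(\lambda))$ is not defined as stated.

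The paper avoids this by doing the reduction in the opposite order: first pass to the Serre \emph{quotient} $\Cat_1:=\Cat/\Cat''$ (so that $\Lambda_a$ becomes the minimal family, and the image $P_1(\lambda)$ is again the indecomposable projective with label $\lambda$), and then apply the left adjoint $\iota^!$ of the inclusion $\iota:\Cat_a\hookrightarrow\Cat_1$. This $\iota^!$ is defined on all of $\Cat_1$, commutes with $E$, sends $P_1(\lambda)$ to $P_a(\lambda)$, and kills $P_1(\nu)$ for $\nu$ in families above $\Lambda_a$ because $\head(P_1(\nu))$ lies outside $\Cat_a$; this is what makes the multiplicity count go through cleanly. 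Your version could be repaired by first passing from $\Cat$ to $\Cat'$ via the left adjoint of the inclusion, but then you would still need to argue that the quotient $\Cat'\to\Cat_a$ preserves the relevant multiplicities of projectives, which is slightly delicate for projectives with labels in the killed families. The second-step concern you raise (that refining the inherited order on $\Lambda_a$ to dominance leaves the standards unchanged) is a legitimate point that the paper leaves implicit; it resolves essentially as you indicate, since refining a highest weight order never changes the standards. Your treatment of the $F$ case via the naive dual $\bar{\Cat}$ matches the paper.
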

\begin{proof}
It is enough to prove the claim for $EP(\lambda)$, the claim for $F P(\lambda)$ is  obtained via passing to the dual categorification.
Thanks to the family filtration, see Subsection \ref{SS_family_filtr}, we can pass to a highest
weight quotient categorification $\Cat_1$ of $\Cat$ with the property that $\Lambda_a$
is a minimal family (recall a natural inclusion $\Cat_1^{\Delta}\subset \Cat^\Delta$).  Below
we assume that $\Cat=\Cat_1$.

Let $\iota$ denote the inclusion of the Serre subcategory $\Cat_a$ corresponding to
the poset $\Lambda_a$ into $\Lambda$ and $P_a(\lambda)$ be the indecomposable corresponding to $\lambda\in \Lambda_a$
in $\Cat_a$. Then $\iota^!(P(\lambda))=P_a(\lambda)$. Since $\iota^!$ commutes with $E$, we are done.
\end{proof}

The following proposition describes the head of $E\Delta(\lambda)$. This description generalizes
 Brundan's and Kleshchev's for the representations of $\GL$, see \cite{BK_functors}.

\begin{Prop}\label{Prop:E_stand}
Suppose $\lambda\in \Lambda_a, \lambda=\sigma_a(t)$. Then $\head(E\Delta(\lambda))=\bigoplus_{i}L(\sigma_a(t^i))$,
where $i$ is running over the set of all indices such that $t_i=+, h^{i+1}_+(t)=0$. Recall that $t^i$ is the $n_a$-tuple
obtained from $t$ by replacing the $i$th element with a $-$.
\end{Prop}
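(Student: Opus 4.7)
The plan is to reduce the statement to a basic categorification and then read off the head from the minimal projective resolution of $\Delta(t)$ provided by Theorem \ref{Thm_proj_resol}.

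For the reduction, I would follow the pattern of the proof of Proposition \ref{Prop:proj_arb}: use the family filtration of Subsection \ref{SS_family_filtr} to replace $\Cat$ by a quotient categorification in which $\Lambda_a$ is a minimal family. The Serre subcategory $\Cat_a\subset\Cat$ attached to $\Lambda_a$ is then a subcategorification closed under $E$ and $F$, and is in fact basic; the left adjoint $\iota^!\colon \Cat\to\Cat_a$ of the inclusion therefore commutes with $E$ and sends $\Delta(\lambda)$ to the standard $\Delta_a(\lambda)$ in $\Cat_a$. Because $E\Delta(\lambda)$ has a standard filtration with layers in $\Lambda_a$, its head is supported on $\Lambda_a$, and passing to $\Cat_a$ does not alter it. So it suffices to prove the proposition in the basic case, where $\lambda=t\in \{+,-\}^n$.

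In the basic case, apply $E$ to the beginning $P_1 \to P(t) \to \Delta(t) \to 0$ of the minimal projective resolution from Theorem \ref{Thm_proj_resol} to obtain $EP_1 \to EP(t) \to E\Delta(t)\to 0$. Passing to heads, using the identity $\head(\mathrm{coker}(A\to B))=\head(B)/\mathrm{image}(A\to\head(B))$, yields
$$
\head(E\Delta(t))=\head(EP(t))\,\big/\,\mathrm{image}\bigl(EP_1\to \head(EP(t))\bigr).
$$
Proposition \ref{Prop:character}(ii) identifies $\head(EP(t))=\bigoplus_{i\in I}L(t^i)^{\oplus h^i_-(t)+1}$, and applied to each summand $EP(t^D)$ of $EP_1$ (where $D$ ranges over divisions of $t$ with $s(D)=1$) gives an analogous description of $\head(EP_1)$. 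The question then becomes one of computing the rank of the induced map between these semisimple objects on each $L(t^i)$-isotypic component.

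The principal obstacle is that rank computation, which should give $h^i_-(t)$ on each $L(t^i)$-component with $i\in I$ (leaving exactly one copy of $L(t^i)$ in the cokernel) and nothing on $L(\mu)$-components for $\mu\neq t^i$. I would establish this by induction on $n$, exploiting the splitting of $\Cat$ into the smaller basic categorifications $\Cat_+$ and $\Cat_-$ from Section \ref{S_cat_spl}. The cases $t=t'+$ and $t=-t'$ reduce directly to the head computation in $\Cat_+$ and $\Cat_-$, respectively, via arguments parallel to Lemmas \ref{Lem:ind_t+} and \ref{Lem:ind_t-}. The remaining case $t=+t'-$ requires the bootstrapping argument from the proof of Theorem \ref{Thm_proj_resol}, which uses the short exact sequence (\ref{eq:main_exact_proj}) to tie the computation to one in a basic categorification of strictly smaller size.
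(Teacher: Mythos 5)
Your reduction to the basic case is sound and matches the paper in spirit, but the heart of the argument — the rank computation you flag as ``the principal obstacle'' — is precisely where the proof has to live, and the sketch you give for it does not go through as stated. The identity $\head(\mathrm{coker}(A\to B))=\head(B)/\mathrm{im}(A\to\head(B))$ is fine, but Theorem \ref{Thm_proj_resol} only specifies the \emph{objects} $P_i$ of the minimal resolution, not the differentials, so the rank of the map $\head(EP_1)\to\head(EP(t))$ on each $L(t^i)$-isotypic component is genuinely new information. Moreover the proposed induction has real difficulties: for $t=t'+$, the object $E\Delta(t'+)$ does \emph{not} lie in $\Cat_+^\Delta$ (it has the extra top quotient $\Delta(t'-)$ coming from the short exact sequence $0\to E_+\Delta(t'+)\to E\Delta(t'+)\to\Delta(t'-)\to 0$), and determining the $\Cat_+$-part of its head requires controlling a connecting map into $\Ext^1(\Delta(t'-),L(s'+))$, which is a priori nonzero since $t'-<s'+$. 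So the claim that this case ``reduces directly'' is not justified.

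The paper takes a cleaner route that avoids the rank computation altogether. First, the $\Delta$-filtration of $E\Delta(\lambda)$ and Proposition \ref{Prop:proj_arb} (applied to $EP(\lambda)$) immediately give the upper bound $\head(E\Delta(\lambda))\subset\bigoplus_i L(\sigma_a(t^i))$ over the correct index set, each summand with multiplicity at most one. For the lower bound, it suffices (by chasing the standard filtration of $E\Delta(\lambda)$) to show $\Ext^1(\Delta(t^j),L(t^i))=0$ for every $j>i$ with $t_j=+$, and after the same reduction to the basic case this vanishing is read off from Theorem \ref{Thm_proj_resol}: a nonzero Ext$^1$ would force a division $D$ of $t^j$ with a single switchable pair $(i,j)$, hence $\wt(t_{i+1},\ldots,t_{j-1})=0$, and then the $+$ at position $j$ survives the bracket cancelation, contradicting $h_+^{i+1}(t)=0$. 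I would steer you to this two-sided (upper bound via $EP$, lower bound via $\Ext^1$-vanishing) argument rather than attempting to determine the differentials in the resolution.
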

\begin{proof}
Because of  the standard filtration, only a simple of the form $L(\sigma_a(t^i))$ can appear in the head of $E\Delta(\lambda)$ and that the
multiplicity of every simple in the head is at most $1$. 
Thanks to Proposition \ref{Prop:proj_arb}, the only elements $\mu\in \Lambda_a$ such that
$P(\mu)$ appears  in $EP(\lambda)$ have the form $\mu=\sigma_a(t^i)$ with $h_+^{i+1}(t)=0$.
It follows that $\head(E \Delta(\lambda))\subset \bigoplus_{i}L(\sigma_a(t^i))$, where the summation is as in the statement of the
proposition. It remains to prove that every summand on the right hand side appears in  the left hand side. For this it is enough
to show that $\Ext^1(\Delta(\sigma_a(t^j)), L(\sigma_a(t^i)))=0$ for all $j>i$. Thanks to the existence of  a
family filtration, we can reduce the proof to the case when $\Lambda_a$
is a maximal family in $\Lambda$.   The minimal projective resolution of $\Delta(\sigma_a(t^j))$ is the same in
$\Cat$ and in the quotient corresponding to the family $\Lambda_a$. So we may assume that
$\Lambda=\Lambda_a$. Thanks to Theorem \ref{Thm_proj_resol}, we need to check that there is no division $D$ of $t^j$ with $s(D)=1$ and $t^i=(t^j)^D$. The $n$-tuples $t^j$ and $t^i$ differ only in 2 positions: $i$ and $j$ so $(i,j)$ is the only switchable pair in $D$.
However, this implies that $\wt(t_{i+1},\ldots,t_{j-1})=0$. Since $t_j=+$, this contradicts $h^{i+1}_+(t)=0$.
\end{proof}

\subsection{Structure of $EL(t)$}
In this subsection we will use the above results to get some information on the structure of $EL(t)$ for $t\in \{+,-\}^n$. The simple
subquotients of $EL(t)$ together with multiplicities were described in Remark \ref{Rem:part2_equiv}.

Let $s^1,\ldots,s^h$, where $h=h_+(t)$, be the elements of $\{+,-\}^n$ obtained from $t$ by replacing the
$i$th (from the left) $+$ in the reduced form of $t$ with a $-$. According to the previous lemma,
$L(s^1),\ldots, L(s^h)$ are precisely the irreducible constituents of $E L(t)$, $L(s^i)$ occurs
with multiplicity $i$. We remark that $h_-(s^h)=h_-(t)+1$ and $s^h=\tilde{e} t$, while
$h_-(s^i)=h_-(t)$ for $i<h$.

Now we are going to investigate a finer structure of $E L(t)$. We have an endomorphism $X$ of $E L(t)$ with $X^{h+1}=0$.
Set $N_i:=\ker X^i/\ker X^{i-1}$. Clearly, $X$ induces embeddings $N_{h}\hookrightarrow N_{h-1}\hookrightarrow\ldots\hookrightarrow N_1$.
Recall that by the radical filtration of an object $N$ one means the sequence $N=R_0(N)\supset R_1(N)\supset\ldots$ such that
$R_i(N)$ is the kernel of the map $R_{i-1}(N)\twoheadrightarrow \head(R_{i-1}(N))$. Dually, one introduces the coradical
filtration $\{0\}=R_0^*(N)\subset R_1^*(N)\subset\ldots$.

\begin{Prop}\label{Prop:EL}
The simple constituents of $N_i$ are $L(s^i),\ldots,L(s^h)$, each occurring with multiplicity $1$.
Furthermore, $R_j(N_i)$ coincides with $R^*_{h+1-i-j}(N_i)$ and has simple constituents $L(s^{i+j}),\ldots, L(s^h)$.
\end{Prop}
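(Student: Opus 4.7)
The plan is to prove the proposition by induction on $n$, using the categorical splitting of Section~\ref{S_cat_spl} to handle most cases and the Ext data of Theorem~\ref{Thm_proj_resol} for the remaining rigidity. First I would handle the case $t_n = -$, writing $t = s'-$ with $s' \in \{+,-\}^{n-1}$. Then $L(t) = \iota L_-(s')$ lies in $\Cat_-$, which is stable under $E$ and under the endomorphism $X$, and the induced categorification functor on $\Cat_-$ is $\underline{E} = E|_{\Cat_-}$. The labels $s^i$ for $t$ are precisely the corresponding labels for $s'$ with a trailing $-$ appended, since the positions of the $+$'s in the reduced form of $t$ coincide with those of $s'$. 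Consequently $EL(t) = \iota(\underline{E} L_-(s'))$ with identical $X$-filtration, and the proposition for $t$ follows from the inductive hypothesis applied to the basic categorification $\Cat_-$ of size $n-1$.

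For $t_n = +$, no direct splitting reduction is available, so the argument combines several structural inputs. The composition multiplicities $[EL(t):L(s^j)] = j$ come from Remark~\ref{Rem:part2_equiv}; the nilpotency $X^{h+1} = 0$ on $EL(t)$ is immediate from $E^{(h+1)} L(t) = 0$; and the head and socle of $EL(t)$ are each equal to $L(s^h) = L(\tilde{e} t)$ with multiplicity one, which follows from \cite[Lemma 5.11]{CR} (only $L(s^h)$ meets the required $h_-$-increment) combined with the simplicity of $E^{(h)}L(t) = L(\tilde{e}^h t)$. Using the divided-power decomposition $E^h \cong \C^{h!} \otimes E^{(h)}$, these inputs force the action of $X$ on the $L(s^j)$-isotypic component of $EL(t)$ to be a single Jordan block of size $j$. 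Summing over $j$, each $N_i$ then has exactly the composition factors $L(s^i), L(s^{i+1}), \ldots, L(s^h)$, each with multiplicity one.

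The main obstacle is upgrading this to the uniserial structure of each $N_i$ and the coincidence of radical and coradical filtrations. For this I would combine the minimal projective resolutions of Theorem~\ref{Thm_proj_resol} with standard short exact sequences $0 \to K \to \Delta \to L \to 0$ to compute $\Ext^k(L(s^i), L(s^j))$ for the pairs in the chain $s^i, s^{i+1}, \ldots, s^h$. The key rigidity statements---one-dimensionality of $\Ext^1(L(s^i), L(s^{i+1}))$ and vanishing of $\Ext^1(L(s^i), L(s^j))$ for $|j - i| \geq 2$ with both $i, j$ in the chain---then propagate through the composition-factor data to force each $N_i$ to be uniserial with the prescribed radical layers. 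Finally, the equality $R_j(N_i) = R^*_{h+1-i-j}(N_i)$ follows from a compatibility between the naive duality $\bar{\Cat}$ and the opposite-category duality $\Cat^{opp}$: together they provide a self-duality of the $X$-filtration which reverses the chain $s^i, \ldots, s^h$ and hence identifies the two filtrations.
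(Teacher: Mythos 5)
The case $t_n = -$ in your proposal matches the paper's argument. But your claim that ``for $t_n = +$, no direct splitting reduction is available'' is incorrect, and this is where your approach diverges and develops a gap. The paper does use the categorical splitting in the $t_n=+$ case as well: taking $\pi : \Cat \to \Cat_+$, one has $\pi(EL(t)) = E_+ L_+(t)$ with the induced endomorphism $X_+$, and the heart of the argument is showing $\pi(\ker X^i) = \ker X_+^i$, so that $\pi$ carries the whole $X$-filtration of $EL(t)$ onto that of $E_+L_+(t)$. This lets the structure of each $N_i$ (and the equality of radical and coradical filtrations) be read off directly from the inductive hypothesis on the size-$(n-1)$ categorification $\Cat_+$. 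When $n \in I_+(t)$ there is an extra wrinkle because $\pi$ kills the factor $L(s^h)$, and the paper supplements with the nilpotency bound for $X$ on $\Hom(P(s^h),EL(t))$ coming from \cite[3.3.1, Proposition 5.20(c)]{CR}; your proposal does not address this subcase at all.

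Your substitute strategy --- computing $\Ext^1$ between the $L(s^j)$ and letting ``rigidity propagate'' --- does not close the argument. Knowing that $\Ext^1(L(s^i),L(s^{i+1}))$ is one-dimensional and $\Ext^1$ vanishes between non-adjacent factors does not determine the radical filtration of a specific module with those composition factors; it does not even yield indecomposability of $N_i$. The paper avoids this entirely by transporting the module structure, not just numerical invariants, along $\pi$. Similarly, the suggested ``self-duality of the $X$-filtration'' from combining $\bar{\Cat}$ and $\Cat^{opp}$ is not substantiated: $\bar{\Cat}$ swaps $E$ and $F$ and sends the hierarchy structure to its dual, and it is not clear that composing with $\Cat^{opp}$ produces a self-equivalence of $\Cat$ fixing simples and reversing the chain $s^i,\ldots,s^h$. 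The radical/coradical coincidence in the paper is again obtained by induction through $\pi$, with no appeal to such a duality. So while the composition-factor bookkeeping in your second paragraph is correct, the structural conclusions in the third paragraph are not justified as written.
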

We remark that the last claim is equivalent to $R_{j-1}(N_i)/R_{j}(N_i)=L(s^{i+j})$ for all $j$
(or to $R_j^*(N_i)/R^*_{j-1}(N_i)=L(s^{h+1-j})$ for all $j$).
\begin{proof}
We prove this claim by induction on $n$. The case $n=1$ is trivial. Suppose that the claim is proved
for all basic categorifications of size $n-1$. Recall the subcategory $\Cat_-\subset \Cat$
and the quotient $\Cat_+$ of $\Cat$ with the projection $\pi:\Cat\rightarrow \Cat_+$.
Consider the simple $L_{t_n}(t)$ in $\Cat_{t_n}$ corresponding to
$L(t)$. Recall that on $\Cat_{t_n}$ we have, thanks to Section \ref{S_cat_spl}, categorification
functors $E_{t_n},F_{t_n}$, where $E_{t_n}$ is the functor induced
by $E$ (recall that $\Cat_-$ is $E$-stable). In particular, we have $\pi(E L(t))=E_{+}\pi(L(t))=E_+ L_+(t)$
and the morphism $X_+$ of $E_+ L_+(t)$ is induced from $X$.

We have $s^j_n=t_n$ for all $j$ if $t_n=-$ or if $t_n=+$ and $n\not\in I_+(t)$. If $t_n=-$,
then $EL(t)\cong E_{-}L_{-}(t)\in \Cat_-$ and we are done by induction, because the size
of $\Cat_{-}$ is $n-1$. If $t_n=+$ and $n\not\in I_+(t)$, then $\pi(L(s^j))=L_{+}(s^j)$
for all $j$. Moreover, $\pi(\ker X^i)\subset \ker X_+^i$. The operator on $\Hom(P(s^j), EL(t))$
induced by $X$ has the nilpotency degree not exceeding $j-1$. It follows that the multiplicity
of $L(s^j)$ in $\ker X^i$ is at least $j-i$. But the multiplicity of $L_+(s^j)$ in $\ker X_+^i$
is exactly $j-i$. So $\pi(\ker X^i)=\ker X_+^i$. Also $\pi(\ker X^i/\ker X^{i+1})=\ker X_+^i/\ker X_+^{i+1}$
and therefore $\pi(R_j(\ker X^i/\ker X^{i+1}))\supset R_j(\ker X_+^i/\ker X_+^{i+1})$. From
$$R_{j-1}(\ker X_+^i/\ker X_+^{i+1})/R_j(\ker X_+^i/\ker X_+^{i+1})=L_+(s^{i+j})$$ we deduce
$$R_{j-1}(\ker X^i/\ker X^{i+1})/R_j(\ker X^i/\ker X^{i+1})=L(s^{i+j}).$$

So it remains to consider the case when $t_n=+, n\in I_+(t)$. The difference here is that
$\pi(L(s^h))=0$. But still $\pi(L(s^j))=L_+(s^j)$ for $j<h$. First, let us remark that $R_1^*(\ker X)=L(s^h)$ (this is just the socle
of $EL(t)$). Second, the nilpotency degree of $X$ on $\Hom(P(s^h), E L(t))$ is $h-1$ (this follows from \cite[3.3.1, Proposition 5.20(c)]{CR}). Then the induction assumption applied to $E_+ L_+(t)$ leads to the proof of the proposition completely
analogously to the previous  paragraph.
\end{proof}

\section{Ringel duality and tiltings}\label{S_Ringel}
\subsection{Categorification on the Ringel dual}\label{SS_Ringel_cat}
The goal of this subsection is to obtain character formulas for tiltings in a basic categorification $\Cat$,
understand their images under the categorification functors, and produce a minimal tilting resolution
of each standard object. All this goals will be achieved once we equip the Ringel dual $\Cat^\vee$
with a highest weight categorical $\sl_2$-action.

First, let us recall a few standard things about Ringel duals  following Rouquier, \cite[4.1.5]{rouqqsch}. Let $\Cat$
be a highest weight category with poset $\Lambda$. Let $T$
be the sum of all indecomposable tiltings. Then $\Cat^\vee$ is the category of all finite dimensional
right $\End(T)$-modules. This category is highest weight with the poset opposite to $\Lambda$.
The  standard objects are $\Hom(T,\nabla(\lambda))$. There is an equivalence $\Cat^\nabla(=(\Cat^{opp})^\Delta)
\xrightarrow{\sim} (\Cat^\vee)^\Delta$ sending $M$ to $\Hom(T,M)$.  The tilting $T(\lambda)$
is sent to the projective $P^\vee(\lambda)$ in $\Cat^\vee$ corresponding to $\lambda$.
This induces an equivalence $\Cat^\vee-\operatorname{proj}\xrightarrow{\sim} \Cat-\operatorname{tilt}$.

Now suppose that $\Cat$ is equipped with an $\sl_2$-categorification such that $E,F$ preserve $\Cat^\Delta$.
Then $E,F$ preserve also $\Cat^{\nabla}$ and hence $\Cat$-$\operatorname{tilt}$. Using the identification $(\Cat^\vee)^{\Delta}= \Cat^\nabla$ we can transfer $E,F$ to
exact biadjoint functors $F^\vee,E^\vee$ on $(\Cat^\vee)^{\Delta}$ (so that $E$ corresponds to $F^\vee$). Since $\Cat^\vee-\operatorname{proj}=\Cat-\operatorname{tilt}$,
we see that $E^\vee, F^\vee$ preserve $\Cat^\vee-\operatorname{proj}$.
Being biadjoint, the functors $E^\vee,F^\vee$ uniquely extend to biadjoint functors on $\Cat^\vee$.
The transformations $X$ of $E^\vee$ and $T$ of $(E^\vee)^2$ are defined in an obvious way. So $E^\vee, F^\vee$
define a categorification. Clearly, $E^\vee,F^\vee$ preserve $(\Cat^{\vee})^{\Delta}$.

Now suppose that $\Cat$ is a highest weight $\sl_2$-categorification with respect to a
hierarchy structure on $\Lambda$. Let us show that $\Cat^\vee$  also becomes a highest
weight $\sl_2$-categorification if we modify the hierarchy structure on $\Lambda$.
For $t=(t_1,\ldots,t_n)\in \{+,-\}^n$, let $t^\vee=(\bar{t}_1,\bar{t}_2,\ldots, \bar{t}_n)$,
where $\bar{t}_i\neq t_i$. The  map $t\mapsto t^\vee$ is an order reversing bijection.
Define $\sigma_a^\vee:\{+,-\}^{n_a}\rightarrow \Lambda_a$ by $\sigma^\vee_a(t)=\sigma_a(t^\vee)$.
Then the collection $\Lambda_a,\sigma_a^\vee$ defines a family structure on $\Lambda$. Also
we can define a new splitting structure on $\Lambda$: $\Lambda^{\vee a}_>:=\Lambda^a_{<}, \underline{\Lambda}^{\vee a}_+:=\Lambda^a_{-},
\underline{\Lambda}^{\vee a}_-:=\underline{\Lambda}^a_+, \Lambda^{\vee a}_<:=\Lambda^{a}_>$. Similarly,
we define the whole hierarchy structure. The construction of $E^\vee,F^\vee$ together with Remark \ref{Rem:opp_cat}
imply that (ii) holds. Also each of (iii$^1$)-(iii$^3$) is preserved.

Clearly, if $\Cat$ is basic, then $\Cat^\vee$ is basic too. 


%

From this construction and Theorem \ref{Thm_proj_resol} we deduce in a straightforward way a description of a minimal tilting resolution
of a standard object and hence the character formulas for the indecomposable tiltings and the decomposition
of $E T(\lambda), F T(\lambda)$ into indecomposables. An analog of Proposition \ref{Prop:proj_arb} also holds. Let us record
it as we plan to use it in a subsequent paper.

\begin{Prop}\label{Prop:tilt_arb}
Let $\Cat$ be a highest weight $\sl_2$-categorification with respect to a hierarchy structure on a poset $\Lambda$.
Pick a family $\Lambda_a$ and $\lambda\in \Lambda_a$. Set $t:=\sigma_a^{-1}(\lambda)$. Then $ET(\lambda)$ contains
$T(\sigma_a(t^i))$ as a direct summand with multiplicity  $h_-(t_1,\ldots,t_{i})+1$ for all indices $i$ such that
$t_i=+, h_+(t_1,\ldots,t_{i-1})=0$ (and does not contain $T(\sigma_a(s))$ for the other elements $s$).
Similarly, $F T(\lambda)$ contains $T(\sigma_a(\bar{t}^i))$
with multiplicity $h_+^i(t)+1$ for all indices $i$ such that $h_-^{i+1}(t)=0$ and $t_i=-$.
\end{Prop}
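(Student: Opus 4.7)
The plan is to deduce both assertions from Proposition \ref{Prop:proj_arb} applied to the Ringel dual categorification $\Cat^\vee$ constructed in Subsection \ref{SS_Ringel_cat}. Recall that $\Cat^\vee$ is again a highest weight $\sl_2$-categorification on $\Lambda$, but with family structure $\sigma_a^\vee(t)=\sigma_a(t^\vee)$ (and hierarchy structure modified accordingly), that the tilting-projective equivalence $\Cat\text{-}\operatorname{tilt}^{opp}\xrightarrow{\sim}\Cat^\vee\text{-}\operatorname{proj}$ identifies $T(\lambda)$ with $P^\vee(\lambda)$, and that the functor $E$ (resp.\ $F$) on $\Cat$ corresponds under Ringel duality to $F^\vee$ (resp.\ $E^\vee$) on $\Cat^\vee$. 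In particular, an indecomposable summand of $ET(\lambda)$ with multiplicity $m$ is the same as an indecomposable summand of $F^\vee P^\vee(\lambda)$ with multiplicity $m$.

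First I set $s:=(\sigma_a^\vee)^{-1}(\lambda)=t^\vee$, so that $s_j=\bar t_j$ for all $j$. Sign-reversal swaps the roles of $+$ and $-$, and hence swaps $h_+$ with $h_-$; it also exchanges the operations $t\mapsto t^i$ and $s\mapsto\bar s^i$, since $(\bar s^i)^\vee=t^i$ and $(s^i)^\vee=\bar t^i$. Applying the $F$-half of Proposition \ref{Prop:proj_arb} to $\Cat^\vee$ to decompose $F^\vee P^\vee(\lambda)$, the summands $P^\vee(\sigma_a^\vee(\bar s^i))$ translate under the Ringel correspondence to summands $T(\sigma_a(t^i))$ of $ET(\lambda)$; the indexing condition "$s_i=-$ and $h_-(s_1,\ldots,s_{i-1})=0$" becomes "$t_i=+$ and $h_+(t_1,\ldots,t_{i-1})=0$"; and the multiplicity $h_+(s_1,\ldots,s_i)+1$ becomes $h_-(t_1,\ldots,t_i)+1$. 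The assertion that no other indecomposable tiltings appear is just the analogous statement for projectives in $\Cat^\vee$.

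The decomposition of $FT(\lambda)$ is obtained in exactly the same way by applying the $E$-half of Proposition \ref{Prop:proj_arb} to $\Cat^\vee$: under $s=t^\vee$ the condition "$s_i=+$, $h_+^{i+1}(s)=0$" becomes "$t_i=-$, $h_-^{i+1}(t)=0$", the multiplicity $h_-^i(s)+1$ becomes $h_+^i(t)+1$, and $\sigma_a^\vee(s^i)=\sigma_a(\bar t^i)$ corresponds to $T(\sigma_a(\bar t^i))$.

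The argument has no genuine obstacle beyond combinatorial bookkeeping: the only step requiring any care is verifying that the Ringel duality construction of Subsection \ref{SS_Ringel_cat} really transports the hierarchy structure on $\Lambda$ used for $\Cat$ to the one needed for $\Cat^\vee$, and that the involution $t\leftrightarrow t^\vee$ interchanges the two sides of the indexing conditions and multiplicities from Proposition \ref{Prop:proj_arb} in precisely the pattern asserted here. Both points are already built into that subsection, so once the dictionary $s=t^\vee$ is in place the proof reduces to direct substitution.
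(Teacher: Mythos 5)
Your overall strategy --- transporting Proposition \ref{Prop:proj_arb} through the Ringel duality of Subsection \ref{SS_Ringel_cat}, using $T(\lambda)\leftrightarrow P^\vee(\lambda)$, $E\leftrightarrow F^\vee$ and the relabelling $s=t^\vee$ --- is the intended one, and everything up to the final substitution is fine. The gap is the sentence ``sign-reversal swaps the roles of $+$ and $-$, and hence swaps $h_+$ with $h_-$.'' This is false for the pure sign flip $t\mapsto t^\vee$. The reduced signature cancels pairs $(a,b)$, $a<b$, with a $-$ in position $a$ and a $+$ in position $b$; after flipping every sign such a pair becomes a $+$ followed by a $-$, which is \emph{not} cancelled. (The operation that genuinely swaps $I_+$ and $I_-$ is the flip-and-reverse $t\mapsto\bar t$ of the naive duality, not $t\mapsto t^\vee$.) Concretely $h_+(+-)=1$ while $h_-(-+)=0$, so ``$h_-(s_1,\ldots,s_{i-1})=0$'' is not equivalent to ``$h_+(t_1,\ldots,t_{i-1})=0$'', and $h_+(s_1,\ldots,s_i)\neq h_-(t_1,\ldots,t_i)$ in general. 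What your argument actually proves is that $ET(\sigma_a(t))$ contains $T(\sigma_a(t^i))$ with multiplicity $h_+(t^\vee_1,\ldots,t^\vee_i)+1$ for those $i$ with $t_i=+$ and $h_-(t^\vee_1,\ldots,t^\vee_{i-1})=0$; in terms of $t$ these are the counts of uncancelled letters of the prefix for the \emph{opposite} bracket convention (equivalently, $h_\pm$ of the prefix read from right to left), not the quantities $h_\pm(t_1,\ldots,t_i)$ in the statement.

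The two formulas genuinely differ, already for $n=2$. Take $t=-+$ in a basic categorification of size $2$, so that $T(-+)=P(+-)$ is the big projective--injective--tilting of the regular block. Proposition \ref{Prop:proj_arb}, or a direct computation in $[\Cat]$, gives $EP(+-)=P(--)^{\oplus 2}=T(--)^{\oplus 2}$, and this agrees with the correctly transported formula ($s=+-$, $i=2$, multiplicity $h_+(+-)+1=2$). The formula as written in the Proposition predicts multiplicity $h_-(-+)+1=1$, which is not even compatible with the class of $ET(-+)$ in the Grothendieck group. So the substitution step cannot be repaired so as to produce the statement as written: the correctly performed Ringel-dual transport contradicts it, which indicates that the statement itself requires the same combinatorial correction. (The $FT(\lambda)$ half has the analogous problem: $FT(-+)=FP(+-)=T(++)^{\oplus 2}$, while the stated formula gives multiplicity $h_+^1(-+)+1=1$.) In short: right method, but the key combinatorial identification is wrong, and the formula you end up ``confirming'' is not the one your argument establishes.
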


\subsection{Reflection functor $\Theta$}
We are going to produce a concrete realization of the Ringel duality
for a basic categorification. For this we consider the reflection functor
$\Theta$ (the Rickard complex) originally defined by Rickard for symmetric groups, see \cite[Section 6.1]{CR}.
Let $\Cat$ be a  basic highest weight $\sl_2$-categorification of size $n$ and $\Cat=\bigoplus_{w=-n}^n\Cat_w$ be its weight decomposition.
Following \cite{CR}, for $d\geqslant 0$, consider the direct summands $E^{(sgn, d)}\subset
E^d, F^{(1,d)}\subset F^d$ that are realizations of divided powers $E^{(d)},F^{(d)}$ that has already appeared above.
Set $\Theta_w^{d}:=E^{(sgn, (n+w)/2-d)}F^{(1,(n-w)/2-d)}$ if $0\leqslant d\leqslant (n-|w|)/2$ (we remark that we shift the degrees
comparing to \cite{CR}). We set  $\Theta_w^{-d}:=0$ else.
Define a complex of functors $\Theta$ as follows: the restriction $\Theta$ to $\Cat_w$ is the complex $\ldots\rightarrow\Theta_w^i\rightarrow \Theta_w^{i+1}\rightarrow\ldots$, where the morphisms were constructed in \cite{CR}.

Our main result regarding a relationship between $\Theta$ and the Ringel duality is Proposition \ref{Prop:dual1}.

\subsection{Images of simples under the duality}
Let $T$ be a tilting generator of $\Cat$ and $L$ be a simple object in $\Cat$. In this subsection we are going
to determine $i$ such that $\Ext^i(T,L)\neq 0$. Recall that to $L$ we assign integers $\wt(L), d(L), h_-(L),h_+(L)$:
$\wt(L)=w$ if $L\in \Cat_w$, $h_-(L)$ (resp, $h_+(L)$) is the maximal integer $i$ such that $F^i L\neq 0$ (resp., $E^iL\neq 0$),
finally $d(L)=h_-(L)+h_+(L)+1$ -- this is the maximal dimension of a simple summand of the submodule of $[\Cat]$ generated
by $[L]$, in particular, $|\wt(L)|\leqslant d(L)-1\leqslant n$. The following claim is the main result of this subsection.

\begin{Prop}\label{Prop:tilt_vanish}
We have $\Ext^i(T,L)=0$ if $i\neq (n-d(L)+1)/2$.
\end{Prop}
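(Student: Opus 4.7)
The plan is to translate the proposition via Ringel duality and prove the resulting cohomological concentration statement by induction on $n$ using the categorical splitting from Section~\ref{S_cat_spl}.

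Since $T = \bigoplus_s T(s)$, the claim reduces to showing $\Ext^i_\Cat(T(s), L(t)) = 0$ for all $s, t \in \{+,-\}^n$ and every $i \neq c(t) := (n - d(L(t)) + 1)/2$. By the Ringel duality for basic categorifications of Subsection~\ref{SS_Ringel_cat}, there is a derived equivalence $\mathsf{R} : D^b(\Cat) \to D^b(\Cat^\vee)$ sending the indecomposable tilting $T(s)$ to the indecomposable projective $P^\vee(s) \in \Cat^\vee$, so
\[
\Ext^i_\Cat(T(s), L(t)) \cong \Ext^i_{\Cat^\vee}(P^\vee(s), \mathsf{R} L(t)) \cong \Hom_{\Cat^\vee}\bigl(P^\vee(s), H^i(\mathsf{R} L(t))\bigr),
\]
the last identification because projectives admit no higher Ext. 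Thus the proposition is equivalent to the assertion that $\mathsf{R} L(t) \in D^b(\Cat^\vee)$ is cohomologically concentrated in degree $c(t)$.

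I would proceed by induction on $n$. The base cases $n = 0, 1$ are immediate by direct inspection, since each simple object is then equal to a standard or costandard and higher Ext groups from tiltings vanish trivially. For the inductive step, I split cases according to $t_n$. If $t_n = -$, then $L(t) = \iota L_-(t')$ lies in the Serre subcategory $\Cat_-$, where $t' = (t_1, \ldots, t_{n-1})$. Since $T(s) \in \Cat^\Delta$ and the functor $\iota^!$ is exact on $\Cat^\Delta$ by Lemma~\ref{Lem:adj_fun_morph}, the left derived functor $\mathsf{L}\iota^!$ agrees with $\iota^!$ on $T(s)$, and the adjunction $\iota^! \dashv \iota$ yields
\[
\Ext^i_\Cat(T(s), L(t)) \cong \Ext^i_{\Cat_-}(\iota^! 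T(s), L_-(t')).
\]
Here $\iota^! T(s) \in \Cat_-^\Delta$ has $\Delta$-multiplicities obtained from those of $T(s)$ by restricting to labels in $\underline{\Lambda}_-$; these multiplicities are controlled by the Ringel-dual version of Theorem~\ref{Thm_proj_resol}. Combined with the inductive hypothesis in $\Cat_-$ (a basic categorification of size $n-1$), this should yield the required concentration, the target degree $c(t)$ arising from the identity $c(t) \in \{c(t'), c(t')+1\}$ depending on whether the final $-$ of $t$ is matched or unmatched in the bracket-cancellation form. The case $t_n = +$ will be treated symmetrically via the dual splitting of $\bar\Cat$ from Remark~\ref{Rem:splitting_duality}.

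The main obstacle will be the degree bookkeeping: each $\Delta_-(\mu)$-subquotient of $\iota^! T(s)$ contributes to $\Ext^i_{\Cat_-}(\iota^! T(s), L_-(t'))$ in a single degree determined by Theorem~\ref{Thm_proj_resol} for $\Cat_-$, but these degrees vary with $\mu$. One must verify that, after summing with the multiplicities arising from the $\Delta$-filtration of $\iota^! T(s)$, the contributions in all but one cohomological degree cancel, leaving only $c(t)$. Equivalently, this is the claim that the complex $\mathsf{R} L(t)$ in $D^b(\Cat^\vee)$, which a priori could have cohomology in several degrees, in fact has at most one nonzero cohomology group; making this cancellation rigorous in arbitrary characteristic, without recourse to the Koszul-theoretic shortcut available in characteristic $0$, is the main technical hurdle.
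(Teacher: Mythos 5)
Your reduction steps are fine as far as they go (the Ringel-duality reformulation is a correct restatement, and $\mathsf{L}\iota^!T(s)=\iota^!T(s)$ does hold by the argument of Lemma \ref{Lem:ind_t-}), but the proof stalls exactly at the point you flag as ``the main technical hurdle,'' and that hurdle is the entire content of the proposition. For $t_n=-$ you reduce to $\Ext^i_{\Cat_-}(\iota^!T(s),L_-(t'))$, but when $s$ ends in a $+$ the object $\iota^!T(s)$ is standardly filtered and \emph{not} tilting in $\Cat_-$, so the inductive hypothesis does not apply to it. A $\Delta$-filtration of $\iota^!T(s)$ together with Theorem \ref{Thm_proj_resol} only gives the upper bound $\dim\Ext^i(M,L)\leqslant\sum_\mu(M:\Delta(\mu))\dim\Ext^i(\Delta(\mu),L)$, and since the degree in which $\Ext^\bullet(\Delta(\mu),L)$ lives varies with $\mu$, this spreads the possible nonvanishing over a whole range of degrees. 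The cancellation you hope for is a statement about connecting maps in long exact sequences being isomorphisms; it cannot be read off from multiplicities, and you propose no mechanism to produce it. (A small secondary point: appending a $-$ always adds an uncancelled $-$ to the reduced form, so $c(t'-)=c(t')$ rather than your ``$c(t')$ or $c(t')+1$.'')

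The paper supplies the missing mechanism with two devices you don't have. First, it reduces to the case $FL=0$ by an auxiliary induction on $h_-(L)$: if $FL\neq 0$ then $L$ is both the head and the socle of $EL'$ for a simple $L'$ with $h_-(L')=h_-(L)-1$, and $\Ext^\bullet(T,EL')=\Ext^\bullet(FT,L')$ is concentrated in one degree since $FT$ is again tilting. Running the long exact sequence once with the kernel of $EL'\twoheadrightarrow L$ and once with the cokernel of $L\hookrightarrow EL'$ squeezes $\Ext^i(T,L)$ from above and below into the single degree $(n-d(L)+1)/2$. Second, for the base case $FL(t+)=0$ it uses the exact sequence $0\to E_+T_+(s)\to ET_+(s)\to T(s-)\to 0$ together with the vanishing $\Ext^\bullet(ET_+(s),L(t+))=\Ext^\bullet(T_+(s),FL(t+))=0$, which forces $\Ext^i(T(s-),L(t+))\cong\Ext^{i-1}(E_+T_+(s),L(t+))$ --- a clean degree shift that feeds the size-$(n-1)$ induction. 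Without some version of these biadjointness arguments your outline cannot be completed.
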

\begin{proof}
We are going to prove this proposition by the induction on $n$ followed by the induction on $h_-(L)$.
We may assume that $\wt(L)\leqslant 0$ -- otherwise we can replace $\Cat$ with $\bar{\Cat}$.

{\it Step 1.} Let $\lambda,\mu$ be such that  $FL(\mu)=0$ and  $\Ext^i(T(\lambda), L(\mu))\neq 0$ for some $i$. 
So $\mu$ has the form $t+$ for some $t\in \{+,-\}^{n-1}$. Let us consider the cases $\lambda=s+, \lambda=s-$
separately.

{\it Step 2.} Assume $\lambda=s-$. Recall the subcategory $\Cat_-\subset \Cat$ and the quotient category
$\Cat_+$ of $\Cat$ that are equipped with isomorphic highest weight $\sl_2$-categorifications.
Then  $T(s-)\in \Cat_-$. Let $T_+(s)$ be the image of $T(s-)$ under the identification $\Cat_-\cong \Cat_+$.
This is a tilting in $\Cat_+$. We view $T_+(s)$ as an object in $\Cat$ via the inclusion $\Cat_+^\Delta\subset \Cat^\Delta$.
As we have noticed in the previous section, we have a short exact sequence
$0\rightarrow E_+ T_+(s)\rightarrow E T_+(s)\rightarrow T(s-)\rightarrow 0$. Since $\Ext^i(E T_+(s),L(t+))=0$
for all $i$ (this is because $FL(t+)=0$) we see that $\Ext^i(T(s-),L(t+))=\Ext^{i-1}(E_+ T_+(s), L(t+))$ for all $i$. The right hand side
is computed in $\Cat$, but, since $E_+T_+(s)\in \Cat_+^\Delta$, in $\Cat_+$ we have the same result (where, rigorously speaking,
we need to replace $L(t+)$ with its image $L_+(t)$). If $F_+ L_+(t)=0$, then $\Ext^\bullet(E_+ T_+(s), L_+(t))=\Ext^{\bullet}(T_+(s), F_+ L_+(t))=0$. Otherwise $h_-(t)=1$ so $F_+ L_+(t)$ is simple with $d(L_+(t))=d(L(t+))+1$. Using the induction
assumption on $n$ we see that $\Ext^i(T_+(s), F_+L_+(t))=0$ whenever $i\neq(n-1+1-d(L)-1)/2$. So
$\Ext^i(T(s-), L(t+))=0$ whenever $i\neq (n+1-d(L))/2$.

{\it Step 3.} Now assume $\lambda=s+$. Then $T(s+)$ is a direct summand in $F T(s-)$, thanks to Proposition \ref{Prop:tilt_arb}.
So $\Ext^i(T(s+), L(t+))\neq 0$
implies $$\Ext^i(F T(s-), L(t+))=\Ext^i(T(s-), EL(t+))\neq 0.$$ The simple subquotients of $EL(t+)$ are
$L(t-)$ and some simples $L_0$ with $FL_0=0$ (and hence $d(L_0)$ is either $d(L_0)-2$ or $2$
if $d(L)=2$). By the previous step, we have $\Ext^i(T(s-),L_0)=0$
whenever $i\neq (n-d(L_0)+1)/2$. Also $\Ext^i(T(s-),L(t-))$ can be computed in $\Cat_-$, compare with the proof
of Lemma \ref{Lem:ind_t-}. We know by induction
on $n$ that the ext may be nonzero only if $i=(n-1+1-d(L_-(t)))$. But $d(L_-(t))=h_+(t)+h_-(t)+1= d(L)-1$.
So $\Ext^i(T(s-), L(t-))=0$ if $i\neq (n+1-d(L))/2$. It follows that $\Ext^i(ET(s-), L(t+))$ can be nonzero
only if $i\neq (n+1-d(L))/2,(n+1-d(L_0))/2$. As we have seen $d(L_0)$ is either $d(L)-2$ or $d(L)$.
In the former case, $(n+1-d(L_0))/2>(n+\wt(L))/2$.
Let us show that $\Ext^{i}(T,L)=0$ for $i>(n+\wt(L))/2$. Indeed, Theorem \ref{Thm_proj_resol} shows
that $\Ext^i(\Delta(t'),L)=0$ for $i>(n+\wt(L))/2$ and any $t'\in \{+,-\}^n$ (this is because $s(D)\leqslant
(n-|\wt(t')|)/2$ for all divisions $D$; of course, if $\wt(t')\neq \wt(L)$, then all Ext's are zero).
Since $T$ is standardly filtered, we see that $\Ext^i(T,L)=0$ for $i>(n+\wt(L))/2$. This completes the
proof of our claim in the case when $FL=0$.

{\it Step 4.} Consider now the general case: $FL\neq 0$. Then $L$ is both the head
and the socle of $EL'$, where $L'$ is a simple with $d(L')=d(L), h_-(L')=h_-(L)-1$. So, by the inductive assumption,
we have $\Ext^i(FT,L')=0$ if $i\neq (n-d(L)+1)/2$  because $FT$ is tilting. Since $E$ and $F$
are biadjoint, we have $\Ext^i(FT,L')=\Ext^i(T,EL')$. Remark \ref{Rem:part2_equiv} implies  that any simple subquotient
$L_0$ of $EL'$ different from $L$ satisfies $h_-(L_0)=h_-(L)-1$. So we have $\Ext^i(T,L_0)=0$ if
$i\neq (n-d(L)+3)/2$.
From here we deduce that $\Ext^i(T,L)=0$ implies $\Ext^i(T,K)=0$ for
$i\neq (n-d(L)+3)/2$ and any subquotient $K$ of $EL'$.

Let us take the kernel of the natural epimorphism $EL'\twoheadrightarrow L$ for $K$.
Then we have an exact sequence
$$\Ext^{i}(T, EL')\rightarrow \Ext^{i}(T,L)\rightarrow \Ext^{i+1}(T,K)\rightarrow \Ext^{i+1}(T,EL').$$
It implies that $\Ext^i(T,L)=\Ext^{i+1}(T,K)$ for $i\neq  (n-d(L)\pm 1)/2$. If $i\neq (n-d(L)+1)/2$,
then $\Ext^{i+1}(T,L)=0$ implies $\Ext^{i+1}(T,K)=0$. Since $\Ext^{i+1}(T,L)=0$ for
all $i$ sufficiently large, we see that $\Ext^i(T,L)=0$ for $i> (n-d(L)+1)/2$.

Now let us take the cokernel of the embedding $L\rightarrow EL'$ for $K$. Then  we  have the exact sequence
$$\Ext^{i-1}(T,EL')\rightarrow \Ext^{i-1}(T,K)\rightarrow \Ext^i(T,L)\rightarrow \Ext^i(T,EL'),$$
where we set $\Ext^{-1}(\bullet,\bullet)=0$. Arguing similarly to the previous paragraph we
see that $\Ext^i(T,L)=0$ for $i\leqslant (n-d_-(L)-1)/2$.

\end{proof}

\subsection{The main result}
\begin{Prop}\label{Prop:dual1}
The functor $\operatorname{RHom}(T, \Theta^{-1}\bullet):\Cat\rightarrow D^b(\Cat^\vee)$
is exact and so is an equivalence of abelian categories $\Cat\rightarrow \Cat^\vee$.
\end{Prop}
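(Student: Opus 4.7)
Since $\Theta^{-1}$ is an equivalence $D^b(\Cat)\to D^b(\Cat)$ by \cite{CR} and $\operatorname{RHom}(T,\bullet)$ is the derived Ringel equivalence $D^b(\Cat)\to D^b(\Cat^\vee)$, the functor $G:=\operatorname{RHom}(T,\Theta^{-1}\bullet)$ is automatically a triangulated equivalence of the derived categories; the substance of the proposition is its $t$-exactness, i.e., that $G$ takes $\Cat$ into $\Cat^\vee$. By a dévissage along composition series, it suffices to verify this for simples: I must show that for every simple $L\in\Cat$, the complex $G(L)$ is cohomologically concentrated in degree $0$.

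Using that $\Theta$ is an equivalence I rewrite $G(L)\cong\operatorname{RHom}(\Theta T,L)$. The Rickard complex $\Theta T$ is a bounded complex of tilting objects, since each term $\Theta^p T$ is a summand of some $E^{(a)}F^{(b)} T$ and tiltings are preserved by $E,F$ and their divided powers in any $\sl_2$-categorification. Consequently the hypercohomology spectral sequence
\[
E_1^{p,q}=\Ext^q(\Theta^{-p}T,L)\Longrightarrow \Ext^{p+q}(\Theta T,L)
\]
collapses onto a single row by Proposition \ref{Prop:tilt_vanish}: $E_1^{p,q}=0$ unless $q=j_0:=(n-d(L)+1)/2$. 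Thus $G(L)$ is quasi-isomorphic to the single complex $\Ext^{j_0}(\Theta^{-\bullet}T,L)$, with its column $p$ placed in total cohomological degree $p+j_0$.

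What remains is to verify that this one-row complex has cohomology concentrated at a single column; combined with the fact that $G$ is an equivalence (and hence $G(L)$ is nonzero), this pins the concentration down to total degree $0$. This is the main technical obstacle. I would attack it by decomposing each $\Theta^p T$ into indecomposable tiltings via iterated applications of Proposition \ref{Prop:tilt_arb}, and then using the explicit Rickard differentials of \cite[Section 6.1]{CR} to identify the induced differentials on $\Ext^{j_0}(\Theta^\bullet T,L)$. A natural strategy is induction on the size $n$ of the basic categorification: the categorical splitting of Section \ref{S_cat_spl} identifies $\Cat_\pm$ with smaller basic categorifications, and the compatibilities of $E,F$ with $\iota^!,\pi^!$ on standardly filtered objects (cf.\ Remark \ref{Rem:F_stand}) make the Rickard complex compatible with these reductions, so that the exactness assertion descends from $\Cat$ to $\Cat_\pm$ and can be concluded by induction, with the trivial base case $n\leqslant 1$.

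As a sanity check on the concentration degree, one may verify on the Grothendieck group that $[G(L)]$, viewed in $K_0(\Cat^\vee)$ via the Euler characteristic, equals the class of a single simple of $\Cat^\vee$ up to sign; combined with the single-degree concentration established above, this forces that degree to be $0$, completes the identification $G(\Cat)=\Cat^\vee$, and confirms that $G$ is an equivalence of abelian categories.
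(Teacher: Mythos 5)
Your reduction to simples and your use of Proposition \ref{Prop:tilt_vanish} to collapse the hypercohomology spectral sequence onto a single row are fine, but the proof stops exactly where the real work begins: you acknowledge that it remains to show the one-row complex $\Ext^{j_0}(\Theta^{-\bullet}T,L)$ has cohomology in a single column, and the route you propose (decomposing each $\Theta^pT$ via Proposition \ref{Prop:tilt_arb} and analyzing the explicit Rickard differentials, with an induction on $n$) is not carried out and would be substantially harder than what the paper does. The paper sidesteps this computation entirely: it evaluates $\operatorname{RHom}(T,\Theta^{-1}\bullet)$ not on simples but on the objects $E^{(k)}L$ with $FL=0$. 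For such $L$ the Rickard complex degenerates, $\Theta^{-1}(L)$ being a \emph{single} shifted object $E^{(\wt(L))}L[-\wt(L)]$, and by \cite[Proposition 5.4]{CKL} one has $\Theta^{-1}(E^{(k)}L)\cong F^{(k)}\Theta^{-1}(L)$; biadjunction moves $F^{(k)}$ onto $T$, which stays tilting, and Proposition \ref{Prop:tilt_vanish} then gives concentration in degree $0$ directly (the shift $[-\wt(L)]$ exactly cancels the degree $(n-d(L)+1)/2$). Since every simple embeds into some $E^{(k)}L$ with $FL=0$ and the top nonvanishing cohomology functor $\mathcal{F}_i$ ($i>0$) is left exact, $\mathcal{F}_i$ vanishes on all simples and hence identically; the dual argument kills $\mathcal{F}_i$ for $i<0$. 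This generation/cogeneration argument is the missing idea in your write-up.

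A secondary problem: even granting single-degree concentration of each $G(L)$, your claim that nonvanishing of $G(L)$ "pins the concentration down to total degree $0$" is unjustified, and the Grothendieck-group check you offer only constrains the parity of the degree (the Euler characteristic sees $(-1)^{d}$, not $d$). The paper's argument with the extreme cohomology functors $\mathcal{F}_i$ avoids having to identify the degree simple by simple. So as written the proposal has a genuine gap at the central step and an inconclusive final calibration; the parts that are complete do overlap with the paper's proof (the role of $T$ being tilting, Proposition \ref{Prop:tilt_vanish}, and the stability of tiltings under $E^{(k)},F^{(k)}$).
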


For the standard basic categorification  this follows, for example, from results
of \cite{MS}.

\begin{proof}
We claim that $\operatorname{RHom}(T, \Theta^{-1}(E^{(k)}L))$ has only $0$th cohomology
as long as $L$ is a simple such that $FL=0$. If $k=0$, then $\Theta^{-1}$
maps $L$ to $E^{(\wt(L))}L[-\wt(L)]$,
and Proposition \ref{Prop:tilt_vanish} shows that $\operatorname{RHom}(T, E^{(\wt(L))}L[-\wt(L)])$
has only $0$th cohomology. If $k>0$, then the proof of Proposition 5.4 in \cite{CKL} shows that
$\Theta^{-1}(E^{(k)}L)$ is isomorphic to $F^{(k)}\Theta^{-1}(L)$ (recall that our functors are shifted
comparing to \cite{CR}). Also $\operatorname{RHom}(T, F^{(k)}\Theta^{-1}(L))=
\operatorname{RHom}(E^{(k)}T, \Theta^{-1}(L))$. Since $E^{(k)}T$ is tilting, we see that
$\operatorname{RHom}(E^{(k)}T, \Theta^{-1}(L))$ is again concentrated in homological degree $0$.

Let $\mathcal{F}_i$ be the $i$th cohomology of $\operatorname{RHom}(T, \Theta^{-1}\bullet)$. Let us choose the maximal $i$
such that $\mathcal{F}_i$ is nonzero. We need to show that $i=0$.
Assume $i>0$. The functor $\mathcal{F}_i$ is left exact. The previous paragraph shows
that  $\mathcal{F}_i$ vanishes on all objects of the form $E^{(k)}L$, where $L$ is a simple with $FL=0$.
But any simple embeds into an object of that form. Since $\mathcal{F}_i$ is left exact, this
means that $\mathcal{F}_i$ vanishes on any simple, and hence on any object.

To prove that $\mathcal{F}_i$ vanishes for $i<0$ we argue similarly.
\end{proof}


\subsection{Homs between standards}
We conclude this section by  showing that the Homs  between standard modules in a basic categorification
behave in the same way as in the standard basic categorification.

The main result of this subsection is the following proposition.

\begin{Prop}\label{Prop:stand_hom}
Let $\Cat$ be a basic highest weight $\sl_2$-categorification of size $n$. Then for $t,s\in \{+,-\}^n$
we have $\dim \Hom(\Delta(s),\Delta(t))=1$ if $s\leqslant t$ and $\Hom(\Delta(s),\Delta(t))=0$ else.
Moreover, any morphism of standards is injective.
\end{Prop}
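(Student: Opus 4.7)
The plan is induction on the size $n$ of the basic categorification; the base cases $n \le 1$ are trivial, since standards in distinct $\sl_2$-weight spaces are Hom-orthogonal and $\End(\Delta(t)) = \C$ by simplicity of the head. For the inductive step, one direction is the usual head argument: a nonzero morphism $\Delta(t)\to\Delta(s)$ has image whose head is $L(t)$, so $L(t)$ is a composition factor of $\Delta(s)$; this forces $t$ and $s$ to be comparable in the highest weight poset, hence in the dominance order on $\{+,-\}^n$ since $\sigma_a^{-1}$ is order-preserving. The precise dimension count and the injectivity statement are then obtained by case analysis on $(t_n, s_n)$, exploiting the splittings developed in Section \ref{S_cat_spl}.

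When $t_n = s_n = -$, both $\Delta(t)$ and $\Delta(s)$ lie in the Serre subcategory $\Cat_-$ and $\Hom_\Cat$ coincides with $\Hom_{\Cat_-}$, so the claim transfers from the inductive hypothesis on the basic categorification $\Cat_-$ of size $n-1$. Symmetrically, when $t_n = s_n = +$, both standards sit in the full subcategory $\pi^!(\Cat_+^\Delta) \subset \Cat^\Delta$ of Remark \ref{Rem:F_stand}, which is equivalent to $\Cat_+^\Delta$, and the inductive hypothesis on $\Cat_+$ applies. The mixed case with $t_n = +$ and $s_n = -$ gives $\Hom = 0$ immediately because the head of $\Delta(t)$ lies outside $\Cat_-$ but $\Delta(s) \in \Cat_-$, and an elementary prefix count under the equal-weight constraint at $m = n$ also shows that the dominance relation must fail in this configuration.

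The essential case is $t = t'-$, $s = s'+$. Here I would apply $\Hom(\Delta(t'-), -)$ to the short exact sequence $0 \to \Delta(s'+) \to F\Delta(s'-) \to Q \to 0$ produced by Lemma \ref{Lem:Fun}, where the subobject $\Delta(s'+)$ is the unique filtration piece outside $\Cat_-$ and $Q \in \Cat_-^\Delta$. Using $(E, F)$-biadjunction to rewrite $\Hom(\Delta(t'-), F\Delta(s'-)) \cong \Hom(E\Delta(t'-), \Delta(s'-))$, one obtains a long exact sequence whose entries are all internal to $\Cat_-$: indeed $E\Delta(t'-)$ has a standard filtration with factors all ending in $-$, hence it lives in $\Cat_-^\Delta$, as do $Q$ and $\Delta(s'-)$. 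The inductive hypothesis supplies each Hom, while Theorem \ref{Thm_proj_resol} supplies the Ext-vanishing needed to control the connecting maps. A termwise evaluation using the standard filtrations of $E\Delta(t'-)$ and $Q$ then collapses the alternating count to the predicted $0$ or $1$; the dominance condition on $(t, s)$ in $\{+,-\}^n$ reduces here to the dominance condition on $(t', s')$ in $\{+,-\}^{n-1}$, since the $m = n-1$ condition is automatically strict for weight reasons.

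Injectivity is propagated from $\Cat_\pm$ by induction in the first two cases. In the main case, any nonzero morphism (unique up to scalar by the dimension statement) corresponds via the adjunction and the inclusion $\Delta(s'+) \hookrightarrow F\Delta(s'-)$ to a morphism in $\Cat_-^\Delta$ that is injective by induction, and injectivity transfers back. The main technical obstacle I anticipate is the combinatorial bookkeeping in the long exact sequence of the main case: identifying precisely which standard factors of the filtrations of $E\Delta(t'-)$ and $Q$ contribute nonzero Homs, and verifying that the $\Ext^1$ connecting maps trim away every surplus contribution so that exactly one dimension remains when the dominance relation is satisfied.
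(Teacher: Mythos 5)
Your reduction to the four cases on $(t_n,s_n)$ is sound as far as it goes, and the three easy cases are essentially the same splitting reductions the paper uses (Lemma \ref{Lem:Hom_subquot}). But the essential case $t=t'-$, $s=s'+$ is not actually proved, and the strategy you sketch for it would not close on its own. The long exact sequence obtained from $0\to\Delta(s'+)\to F\Delta(s'-)\to Q\to 0$ computes $\Hom(\Delta(t'-),\Delta(s'+))$ only if you control the map $\Hom(E\Delta(t'-),\Delta(s'-))\to\Hom(\Delta(t'-),Q)$ and the groups $\Ext^{1}(\Delta(t'-),\Delta(s'+))$, i.e.\ higher Exts \emph{between standard objects}. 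Theorem \ref{Thm_proj_resol} computes $\Ext^i(\Delta(t),L(s))$, not $\Ext^i(\Delta(t),\Delta(s))$; passing from one to the other requires knowing the differentials in the minimal resolution, which is exactly the kind of information an Euler-characteristic argument cannot supply. You flag this yourself as ``the main technical obstacle,'' but it is the whole content of the case. Moreover, even granting the count, this route never establishes $\dim\Hom\leqslant 1$ or injectivity: your injectivity transfer in the $t_n=s_n=+$ case also fails as stated, because the kernel of a map $\Delta(t'+)\to\Delta(s'+)$ killed by $\pi$ could a priori be a nonzero object of $\Cat_-$ (indeed $\Delta(-+)$ has socle $L(+-)\in\Cat_-$, so standards labelled by $\underline{\Lambda}_+$ do have nonzero submodules in $\Cat_-$).

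The paper avoids all of this by first proving that every $\Delta(t)$ has \emph{simple socle occurring with multiplicity one}: Lemma \ref{Cor:stand_socle} (a consequence of the tilting vanishing Proposition \ref{Prop:tilt_vanish}, i.e.\ of Ringel duality) shows the socle constituents $L(u)$ all satisfy $d(u)=n+1$, and Lemma \ref{Lem:socle_mult} (via BGG reciprocity and $P(u)=E^{(\wt u)}\Delta(+\cdots+)$ from Proposition \ref{Prop:character}) shows each such $L(u)$ has multiplicity one in $\Delta(t)$. Since there is a unique such $u$ in each weight, this immediately yields both $\dim\Hom(\Delta(t),\Delta(s))\leqslant 1$ and injectivity of every nonzero morphism. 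Nonvanishing is then proved by factoring $s<t$ into adjacent transpositions of a consecutive $+-$ pair and using the usual and dual splittings together with Lemma \ref{Lem:Hom_subquot} to strip off the common prefix and suffix, reducing to the trivial computation $\dim\Hom(\Delta(+-),\Delta(-+))=1$ in size $2$. If you want to salvage your outline, the missing ingredient is precisely this socle analysis; without it the upper bound, the injectivity, and the cancellation in your long exact sequence all remain unproved.
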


Recall that $s\leqslant t$ means that for any $j$ the number of $-$'s among $s_1,\ldots,s_j$
does not exceed that among $t_1,\ldots,t_j$, with the equality for $j=n$.

The proof requires several lemmas.  First, using Proposition \ref{Prop:tilt_vanish} we can describe
the simples occurring in the socles of standards.

\begin{Lem}\label{Cor:stand_socle}
If $L(s)$ occurs in the socle of $\Delta(t), t\in \{+,-\}^n$, then $d(s)=n+1$, i.e.,
all $+$'s in $s$ occur to the left of $-$.
\end{Lem}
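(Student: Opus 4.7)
The plan is to deduce the lemma from Proposition~\ref{Prop:tilt_vanish} applied to a suitably twisted version of $\Cat$. That proposition controls $\Ext^i(T,L(s))$, whereas what is wanted here is the vanishing of $\Hom(L(s),\Delta(t))$. The bridge is the inclusion $\Delta(t)\hookrightarrow T(t)\subset T$ coming from the standard filtration on the indecomposable tilting $T(t)$, which reduces the task to proving that $\Hom(L(s),T)=0$ whenever $d(s)\ne n+1$.

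To obtain such a vanishing, I will observe that $\bar{\Cat}^{opp}$ is itself a basic highest weight $\sl_2$-categorification of size $n$, in the sense used throughout Section~\ref{S_proj_resol}. Indeed, by Subsection~\ref{SS_hw_def} the opposite category $\Cat^{opp}$ carries a highest weight $\sl_2$-categorification structure with respect to the dual hierarchy on $\{+,-\}^n$, and the naive duality $\bar{-}$ introduced at the end of that subsection converts a dual hierarchy back into an ordinary hierarchy. Neither operation changes the underlying abelian category, so the simples $L(s)$, the sum $T$ of indecomposable tiltings (tiltings being characterized by the condition, symmetric in $\Delta$ and $\nabla$, of being simultaneously $\Delta$- and $\nabla$-filtered), and the $\sl_2$-crystal invariant $d(L(s))$ all agree across these twists.

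Applying Proposition~\ref{Prop:tilt_vanish} to $\bar{\Cat}^{opp}$ then gives $\Ext^i_{\bar{\Cat}^{opp}}(T,L(s))=0$ for every $i\ne(n-d(s)+1)/2$. Using $\Ext^i_{\bar{\Cat}^{opp}}(A,B)=\Ext^i_{\Cat^{opp}}(A,B)=\Ext^i_{\Cat}(B,A)$, this translates to $\Ext^i_{\Cat}(L(s),T)=0$ in the same range, and specializing to $i=0$ yields $\Hom(L(s),T)=0$ whenever $d(s)\ne n+1$. Combined with the chain of inclusions $\Delta(t)\hookrightarrow T(t)\subset T$, this forces any simple in the socle of $\Delta(t)$ to satisfy $d(s)=n+1$, which is the claim.

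The main (minor) obstacle is purely bookkeeping: one has to confirm that $\bar{\Cat}^{opp}$ really satisfies all the hypotheses under which Proposition~\ref{Prop:tilt_vanish} was proved, i.e.\ that the arguments of Section~\ref{S_proj_resol} (the combinatorics of divisions, the recursive use of categorical splitting, the relation $\Ext^i(\Delta,L)=\ldots$) are not disturbed by passing to the opposite followed by the naive duality. Since every ingredient is either intrinsic to the $\sl_2$-categorification or depends only on the hierarchy/dual hierarchy framework interchangeably, no substantive modification is needed, and Proposition~\ref{Prop:tilt_vanish} can be quoted verbatim.
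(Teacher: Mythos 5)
Your argument is correct and is essentially the paper's own proof: the paper also reduces the claim to the statement that a simple in the socle of a standard occurs in the head of a tilting of $\Cat^{opp}$ (you phrase this via $\Delta(t)\hookrightarrow T(t)\subset T$, which is the same reduction) and then quotes Proposition \ref{Prop:tilt_vanish} for the opposite category, using that $d(L)$ and the tilting generator are unchanged under the opposite/naive-duality twists. No substantive difference.
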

\begin{proof}
A simple occurs in the socle of a standard if and only if it occurs in the head of a costandard in $\Cat^{opp}$, equivalently,
if it occurs in the head of a tilting. Now the claim of the corollary follows from Proposition \ref{Prop:tilt_vanish}.
\end{proof}

\begin{Lem}\label{Lem:socle_mult}
Let $t\in \{+,-\}^n$ and $s$ be such that $\wt(s)=\wt(t)$ and $d(s)=n+1$. Then the multiplicity of
$\Delta(s)=L(s)$ in $\Delta(t)$ is $1$.
\end{Lem}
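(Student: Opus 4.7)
The plan is to proceed by induction on the position of $t$ in the dominance order on $\{+,-\}^n$, with base case $t=s$. The key inputs are Proposition~\ref{Prop:character}(i), which identifies $[\Cat]$ with $(\C^2)^{\otimes n}$ so that $[\Delta(u)] = v_u$ (the monomial basis) and the classes $[L(u)]$ form the dual canonical basis, together with the FKK factorisation $[L(u_1-+u_2)] = [L(u_1)]\otimes [L(-+)]\otimes [L(u_2)]$ recalled in Step~1 of the proof of that proposition. For the base case, the condition $d(s)=n+1$ forces $s=+^p-^q$, which is minimal in its weight class, so $\Delta(s)=L(s)$ and the multiplicity is $1$.

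For the inductive step I would use that $s=+^p-^q$ is the unique sign sequence of weight $\wt(s)$ that contains no $-+$ subword. Hence any $t\neq s$ of the same weight factors as $t = t_1 -+ t_2$ at some chosen occurrence, and from the tautology $v_t = v_{t_1}\otimes v_-\otimes v_+\otimes v_{t_2}$ combined with the identity $v_-\otimes v_+ = [L(-+)] + v_+\otimes v_-$ (a rearrangement of $[L(-+)] = v_{-+}-v_{+-}$), I would split
\[
v_t \;=\; v_{t_1}\otimes [L(-+)]\otimes v_{t_2} \;+\; v_{t_1 +- t_2}.
\]
Expanding $v_{t_1}$ and $v_{t_2}$ in their own dual canonical bases and applying FKK termwise turns the first summand into a combination of classes $[L(u_1-+u_2)]$, each labelled by a sequence carrying a $-+$ at the distinguished pair of positions. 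Since $s$ is $-+$-free, none of these equals $[L(s)]$, so extracting the coefficient of $[L(s)]$ yields $[\Delta(t):L(s)] = [\Delta(t_1+-t_2):L(s)]$. As $t_1+-t_2 < t$ in the dominance order and has the same weight as $s$, the inductive hypothesis gives the value $1$.

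The only delicate point is the vanishing of the $[L(s)]$-coefficient coming from the $v_{t_1}\otimes [L(-+)]\otimes v_{t_2}$ summand, and this is exactly where the hypothesis $d(s)=n+1$ is used: the tensor factor $[L(-+)]$ is anchored at the fixed pair of positions in which we split $t$, so every dual canonical basis class appearing in the FKK expansion of that product carries a $-+$ there and is therefore distinct from $[L(s)]$. The rest of the argument is bookkeeping in the Grothendieck group.
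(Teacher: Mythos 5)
Your proof is correct, but it follows a genuinely different route from the paper's. The paper's argument is three lines: by BGG reciprocity $[\Delta(t):L(s)]=[P(s):\Delta(t)]$, and by Proposition~\ref{Prop:character}(ii) applied repeatedly starting from $P(+\ldots+)=\Delta(+\ldots+)$, one gets $P(s)=E^{(q)}\Delta(+\ldots+)$ (where $q$ is the number of $-$'s in $s=+^p-^q$); in the Grothendieck group $e^{(q)}v_+^{\otimes n}=\sum_u v_u$ with each weight-$\wt(s)$ monomial appearing exactly once, so every $[P(s):\Delta(t)]=1$. You instead work entirely on the Grothendieck-group side, invoking Proposition~\ref{Prop:character}(i) to know that the classes $[L(u)]$ form the dual canonical basis, then using the FKK tensor factorisation $[L(u_1-+u_2)]=[L(u_1)]\otimes[L(-+)]\otimes[L(u_2)]$ and the rearrangement $v_{-+}=[L(-+)]+v_{+-}$ to peel off a $-+$ and induct down the dominance order. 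The two proofs lean on the two different halves of Proposition~\ref{Prop:character}: the paper on (ii) (the explicit decomposition of $EP(t)$), you on (i) (the canonical-basis identification). The paper's route is shorter and also yields, as a byproduct, the stronger statement that $P(s)$ is multiplicity-free among standards; your route is more self-contained inside the combinatorics of the canonical basis and has the feature of showing explicitly \emph{why} $[L(s)]$ can never come from the ``$-+$ part'' of the expansion. One small stylistic remark: your inductive hypothesis is indexed by $t$ in its weight class with the dominance order as the well-founded relation, with base case $t=s$ the minimum; that is fine, but it is worth stating explicitly that you are inducting only within the fixed weight class (otherwise $d(s)=n+1$ would not pin $s$ down uniquely).
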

\begin{proof}
By the BGG reciprocity, $[\nabla(t):L(s)]=[P(s):\Delta(t)]$. By Proposition \ref{Prop:character},
$P(s)=E^{(\wt(s))}\Delta(+\ldots+)$. So the multiplicity of any standard $\Delta(t)$ with $\wt(t)=\wt(s)$ in $P(s)$ is $1$.
It follows that the multiplicity of $L(s)$ in any $\nabla(t)$ is $1$. Applying this to $\Cat^{opp}$, we get the required
result.
\end{proof}


\begin{proof}[Proof of Proposition \ref{Prop:stand_hom}]
Lemmas \ref{Cor:stand_socle},\ref{Lem:socle_mult} imply that $\dim \Hom(\Delta(s),\Delta(t))\leqslant 1$
and that any nonzero morphism is injective. It remains to show that $\Hom(\Delta(s),\Delta(t))\neq 0$ when $s<t$.
We remark that for any $s<t$ there is a sequence $t^0=t, t^1,\ldots, t^m=s$ with the following properties:
\begin{itemize}
\item $t^m<t^{m-1}<\ldots<t^1<t^0$.
\item $t^{j+1}$ is obtained from $t^j$ by switching a single consecutive pair consisting of a $+$ and a $-$.
\end{itemize}
So it is enough to show that $\Hom(\Delta(s),\Delta(t))\neq 0$, when there is an index $j$
such that $s_i=t_i$ for $i\neq j,j+1$, $s_j=t_{j+1}=+, s_{j+1}=t_j=-$. By Lemma \ref{Lem:hw_quot},
the Hom spaces between standards with labels in an interval are the same in the ambient category
and in the highest weight subquotient corresponding to the interval.
Using the (usual and dual) categorical splittings from Subsection \ref{SS:cat_pm},
we reduce the question to $n=2$. Clearly, $\dim\Hom(\Delta(+-),\Delta(-+))=1$.
\end{proof}

\begin{Rem}\label{Rem:stand_hom}
Thanks to the existence of a family filtration,
Proposition \ref{Prop:stand_hom} together with the standard results recalled in Subsection \ref{SS_hw_reminder}  allow to compute
$\Hom(\Delta(\lambda),\Delta(\mu))$ in an arbitrary highest weight $\sl_2$-categorification
whenever $\lambda$ and $\mu$ are in the same family. This will be applied in the next section
to the case of categories $\mathcal{O}$ over cyclotomic Rational Cherednik algebra.
\end{Rem}

\section{Applications to cyclotomic rational Cherednik algebras}\label{S_Cher_appl}
\subsection{$\Hom(\Delta(\operatorname{refl}), \Delta(\operatorname{triv}))$}
 Consider the cyclotomic Cherednik  category $\mathcal{O}=\bigoplus_{n=0}^{+\infty}\mathcal{O}(n)$ depending on  parameters $(\kappa, s_0,\ldots, s_{\ell-1})$, where $\kappa$ is non-integral. This category carries multiple highest weight
 categorical $\sl_2$-actions, \cite[3.5]{cryst}.  The standards are parameterized by
the poset of  $\ell$-multipartitions recalled in Subsection \ref{SS_hier_ex}. Such a multi-partition $\lambda$ can be thought
as a representation of $G(n,1,\ell)$, where $n=|\lambda|$. In particular, we have the trivial representation $\operatorname{triv}$
corresponding to the multipartition $\tau_n:=(n,0,0,\ldots,0)$ and the reflection representation corresponding to
$\rho_n:=(n-1,1,0,\ldots,0)$.

In \cite{FS} Feigin and Silantyev studied the space $\Hom(\Delta(\operatorname{refl}), \Delta(\operatorname{triv}))$
in the case of Cherednik algebras of Coxeter groups and so called ``equal parameters'' (they also got some
partial results for the groups $G(n,1,\ell)$).  This study was related to the theory of Frobenius manifolds.

Using the techniques developed above we will describe the spaces $\Hom(\Delta(\operatorname{refl}), \Delta(\operatorname{triv}))$
in the case of cyclotomic groups completely.

\begin{Prop}\label{Prop:stand_hom_Cher}
The space $\Hom(\Delta(\rho_n),\Delta(\tau_n))$ is one-dimensional provided $\kappa(s_0-s_1+n-1)$ is a negative integer
and is zero else.
\end{Prop}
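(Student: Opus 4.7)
The plan is to combine the direct poset-ordering analysis on $\mathcal{P}_\ell$ with Lemma \ref{Lem:Hom_subquot}, the family filtration of Subsection \ref{SS_family_filtr}, and Proposition \ref{Prop:stand_hom} in the spirit of the Remark following that proposition. First I would determine when $\rho_n\leqslant\tau_n$ holds in $\mathcal{P}_\ell$. The two multipartitions share the component-$0$ boxes $(1,k,0)$ for $k=1,\ldots,n-1$ and differ only in $\beta_1:=(1,n,0)\in\tau_n$ versus $\beta_2:=(1,1,1)\in\rho_n$. Any box-bijection witnessing the inequality must pair $(1,k,0)_{\rho_n}$ with $(1,k,0)_{\tau_n}$ for $k<n$ (the equivalence $(1,k,0)\sim(1,k',0)$ forces $k=k'$ since $\kappa\notin\Z$), and hence pair $\beta_2$ with $\beta_1$. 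The inequality $\beta_2\leqslant\beta_1$ requires $\beta_1\sim\beta_2$, i.e.\ $\kappa(s_0-s_1+n-1)\in\Z$, together with $d^p(\beta_2)-d^p(\beta_1)=-\kappa\ell(s_0-s_1+n-1)-1\geqslant 0$, which under integrality becomes $\kappa(s_0-s_1+n-1)\leqslant -1$. Consequently $\rho_n\not\leqslant\tau_n$ whenever $\kappa(s_0-s_1+n-1)\notin\Z_{<0}$, and the Hom then vanishes by the standard highest weight criterion.

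For the remainder assume $m:=\kappa(s_0-s_1+n-1)\in\Z_{<0}$, and set $z:=\exp(2\pi\sqrt{-1}\kappa(n-1+s_0))$ so that $\beta_1,\beta_2$ are the two $z$-boxes of interest. I would check that $\tau_n$ and $\rho_n$ share a common $z$-family $\Lambda_a$: removing the unique removable $z$-box from $\tau_n$ (namely $\beta_1$) and from $\rho_n$ (namely $\beta_2$) yields $((n-1),\emptyset,\ldots,\emptyset)$ in both cases, and the iterative removal halts there because $\kappa\notin\Z$. I then read off $\sigma_a^{-1}(\tau_n)$ and $\sigma_a^{-1}(\rho_n)$ by ordering the addable/removable $z$-boxes of $\Lambda_a$ by increasing $d^p$. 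Since the box-sets of $\tau_n$ and $\rho_n$ differ only by the swap $\beta_1\leftrightarrow\beta_2$, any auxiliary $z$-box of $\Lambda_a$ (such as $(2,1,0)$ when $\kappa n\in\Z$) is simultaneously addable for both multipartitions and contributes the same letter to both labels. The identity $d^p(\beta_2)-d^p(\beta_1)=-m\ell-1\geqslant\ell-1>0$ places $\beta_1$ strictly before $\beta_2$ in the reading, so at those two positions $\sigma_a^{-1}(\tau_n)$ reads $-\cdots+$ while $\sigma_a^{-1}(\rho_n)$ reads $+\cdots-$; a direct prefix-count then yields $\sigma_a^{-1}(\rho_n)\leqslant\sigma_a^{-1}(\tau_n)$ in the dominance order on $\{+,-\}^{n_a}$.

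To conclude I apply Lemma \ref{Lem:Hom_subquot} together with Proposition \ref{Prop:filtration} to identify $\Hom_{\mathcal{O}(n)}(\Delta(\rho_n),\Delta(\tau_n))$ with the analogous Hom in the basic categorification on $\Lambda_a$, and Proposition \ref{Prop:stand_hom} returns the claimed one-dimensional Hom space. The main obstacle is bookkeeping at exceptional parameter values enlarging $\Lambda_a$ beyond its generic two-box structure; the crucial observation is that the symmetric difference of box-sets $\tau_n\triangle\rho_n=\{\beta_1,\beta_2\}$, so any extra $z$-box of $\Lambda_a$ has the same addable/removable status in both multipartitions and is therefore irrelevant to the dominance comparison.
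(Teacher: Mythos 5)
Your proposal follows the same overall route as the paper for the sufficiency direction: locate $\tau_n,\rho_n$ in a common $z$-family $\Lambda_a$, identify $\Hom(\Delta(\rho_n),\Delta(\tau_n))$ with a Hom space in a basic categorification via the family filtration and Lemma \ref{Lem:Hom_subquot}, and invoke Proposition \ref{Prop:stand_hom}. The $d^p$-computation and the dominance comparison of $\sigma_a^{-1}(\rho_n)$ and $\sigma_a^{-1}(\tau_n)$ are correct, and in fact you give more detail here than the paper does.

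There is, however, a genuine gap in your necessity argument. You assert that $(1,k,0)\sim(1,k',0)$ forces $k=k'$ because $\kappa\notin\Z$, and from this deduce that any witnessing box-bijection must be the identity on $\{(1,k,0):k<n\}$ and hence send $\beta_2$ to $\beta_1$. This is false when $\kappa$ is a non-integral rational: for $\kappa=p/q$ with $q\geqslant 2$ one has $(1,k,0)\sim(1,k+q,0)$, and for $n>q$ there are witnessing bijections for $\rho_n\leqslant\tau_n$ that permute the component-$0$ boxes non-trivially and do not pair $\beta_2$ with $\beta_1$ (this actually happens, e.g., for negative $\kappa$). So you cannot read off $\beta_2\leqslant\beta_1$ from a single forced matched pair. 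The paper avoids this by deriving $\beta_1\sim\beta_2$ from the block condition (equality of $z$-weights for all $z$) rather than from the poset comparison. Alternatively, your poset argument can be repaired: extend any witnessing bijection $\phi:\rho_n\to\tau_n$ by setting $\phi(\beta_1):=\beta_2$ to obtain a permutation of $\rho_n\cup\tau_n$; the cycle through $\beta_1$ yields a chain $\beta_2\leqslant\phi(\beta_2)\leqslant\cdots\leqslant\beta_1$, giving $\beta_2\leqslant\beta_1$ by transitivity. With this correction the rest of your argument stands.
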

\begin{proof}
Let $b$ denote the box $(1,n,0)$ and $b'$ denote the box $(1,1,1)$ so that $\tau_n=(\tau_n\cap \rho_n)\sqcup b,
\rho_n=(\tau_n\cap \rho_n)\sqcup b'$.
The necessary condition for the Hom space to be nonzero is that $\Delta(\rho_n),\Delta(\tau_n)$ lie
the same block and $\rho_n<\tau_n$ with respect to the highest weight ordering. The first condition implies that
for any possible pair $E_z,F_z$ of the categorification functors the corresponding weight functions of $\tau_n,\rho_n$
coincide. This boils down to $b\sim b'$, i.e., $\kappa(n+s_0-1-s_1)\in \Z$. The condition $\rho_n<\tau_n$ is equivalent
to $b'<b$, i.e., to $\kappa \ell (s_0+n-1)<\kappa \ell s_1-1$. The conditions $\kappa(n+s_0-1-s_1)\in \Z$
and  $\kappa \ell (s_0+n-1)<\kappa \ell s_1-1$ together are equivalent to  $\kappa(s_0-s_1+n-1)\in \Z_{<0}$.

Now suppose that $\kappa(s_0-s_1+n-1)\in \Z_{<0}$. The boxes $b,b'$ have the same residue modulo $\kappa^{-1}$.
Consider the $\sl_2$-categorification corresponding to that residue. Then $\tau_n,\rho_n$ belong to the same
family, say $\Lambda_a$. Let $t,s\in \{+,-\}^{n_a}$ be such that $\sigma_a(t)=\tau_n, \sigma_a(s)=\rho_n$.
Then there is only one $-$ in both $t,s$.

Thanks to Remark \ref{Rem:stand_hom},
$\Hom(\Delta(\rho_n),\Delta(\tau_n))$ is one-dimensional.
\end{proof}

We finish this subsection by considering the case of $\ell=2$ in more detail
and relating our result to \cite{FS}. Here one can also
use parameters $c_1,c_2$ corresponding to the conjugacy classes of reflections in $G(n,1,2)$,
$c_1$ to the reflections contained in the symmetric group and $c_2$ to reflections from $\Z/2 \Z$.
They are related to $\kappa,s_0,s_1$ by the following formulas, see, e.g., \cite{cryst}.
$$c_1=-\kappa, c_2=\kappa(s_1-s_0)-\frac{1}{2}.$$
In particular, the equal parameter case, $c_1=c_2$, corresponds to $\kappa(s_1+1-s_0)=\frac{1}{2}$.
In this case, the condition of the proposition becomes $\kappa n-\frac{1}{2}\in \Z_{<0}$ and, for $c_1>0$, we recover
the condition from \cite{FS} for the Weyl groups of type $B$: $c_1=\frac{m}{2n}$, where $m$
is a positive  odd number.

\subsection{Blocks}
In this subsection we are going to describe blocks of the category $\OCat$ for cyclotomic Rational Cherednik algebras.
We remark that the description is known thanks to results of Lyle and Mathas, \cite{lylemat}, who described
blocks of the cyclotomic Hecke algebras. The description for $\OCat$ follows from that and the properties of
the KZ functor, compare to the proof of \cite[Lemma 5.16]{shanvasserot}. We are going to obtain an independent proof.

Recall the block equivalence relation on the set of simples. We say that simple objects $L,L'$ lie in the same block
if there are simples $L_0=L, L_1,\ldots, L_k=L'$ such that for each $i=1,\ldots,k-1$ we have $\Ext^1(L_{i-1},L_i)\neq 0$
or $\Ext^1(L_i,L_{i-1})\neq 0$. In this case we will write $L\sim_b L'$. If $L=L(\lambda), L'=L(\lambda')$
we write $\lambda\sim_b \lambda'$.


On the other hand, to each $\lambda\in \mathcal{P}_\ell$ we can assign a collection $(\wt_z(\lambda))_{z\in \C/\kappa^{-1}\Z}$
of the weight of $[\Delta(\lambda)]$ with respect to the $\sl_2$-categorification associated to $\lambda$.
We write $\lambda\sim_w\lambda'$ if the collections for $\lambda$ and $\lambda'$ coincide and also $|\lambda|=|\lambda'|$.

Here is the main result of this subsection.

\begin{Prop}\label{Prop:block}
The equivalence relations $\sim_b$ and $\sim_w$ on $\mathcal{P}_\ell$ coincide.
\end{Prop}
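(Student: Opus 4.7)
I will first show $\sim_b \Rightarrow \sim_w$. The category $\OCat$ splits as $\OCat = \bigoplus_n \OCat(n)$ and, for each residue $z \in \C/\kappa^{-1}\Z$, the $z$-categorification splits it further as $\OCat = \bigoplus_w \OCat^z_w$; both are decompositions of $\OCat$ into direct sums of full subcategories. Consequently every block lies in a single summand of each decomposition, and since the surjection $\Delta(\lambda) \twoheadrightarrow L(\lambda)$ is a morphism in $\OCat$, the object $L(\lambda)$ sits in the weight space indexed by $\wt_z(\lambda)$. Thus $\sim_b$ refines equality of $|\lambda|$ and of every $\wt_z$. For the reverse $\sim_w \Rightarrow \sim_b$, I will introduce an auxiliary equivalence relation $\sim'$ on $\mathcal{P}_\ell$ generated by atomic moves of the form: $\lambda \sim' \lambda'$ whenever $\lambda,\lambda'$ lie in a common $z$-family $\Lambda_a$ and $\sigma_a^{-1}(\lambda), \sigma_a^{-1}(\lambda')$ have the same number of $+$'s. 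Within a fixed $z$-family, $|\lambda|$ depends only on the number of $+$'s in $\sigma_a^{-1}(\lambda)$, so each atomic move preserves $|\lambda|$, and $\sim'$ is an equivalence on each $\mathcal{P}_\ell(n)$. The hard direction will then follow from the two inclusions $\sim' \subseteq \sim_b$ and $\sim_w \subseteq \sim'$.

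\textbf{Proof plan for $\sim' \subseteq \sim_b$.} Suppose $\lambda, \lambda' \in \Lambda_a$ with $\sigma_a^{-1}(\lambda), \sigma_a^{-1}(\lambda')$ of common $\sl_2$-weight $w$. Let $u \in \Lambda_a$ correspond to the dominance-maximum element $-\cdots -+\cdots + \in \{+,-\}^{n_a}$ of weight $w$, so $\lambda \leqslant u$ and $\lambda' \leqslant u$ in the highest weight order on $\Lambda_a$. Using the family filtration (Proposition~\ref{Prop:filtration}), I pass to a subquotient highest weight categorification in which $\Lambda_a$ is a minimal family; the corresponding Serre subcategory $\Cat_a$ is then a basic highest weight $\sl_2$-categorification of size $n_a$. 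Proposition~\ref{Prop:stand_hom} provides injections $\Delta^a(u) \hookrightarrow \Delta^a(\lambda)$ and $\Delta^a(u) \hookrightarrow \Delta^a(\lambda')$ inside $\Cat_a$, making $L^a(u)$ a composition factor of both $\Delta^a(\lambda)$ and $\Delta^a(\lambda')$. Since the composition factors in $\Cat_a$ of a standard $\Delta^a(\mu)$ lying in $\Lambda_a$ coincide with the composition factors in $\OCat$ of $\Delta(\mu)$ lying in $\Lambda_a$, the simple $L(u)$ is a composition factor of both $\Delta(\lambda)$ and $\Delta(\lambda')$ in $\OCat$, witnessing $\lambda \sim_b u \sim_b \lambda'$.

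\textbf{The main obstacle: $\sim_w \subseteq \sim'$.} The combinatorial claim $\sim_w \subseteq \sim'$ is the heart of the proof and the principal difficulty. I plan to establish it by induction on $n = |\lambda|$, the base case $n=0$ being trivial. For the inductive step, given $\lambda, \lambda' \in \mathcal{P}_\ell(n)$ with $W(\lambda) = W(\lambda')$, I would select a residue $z_0$ and (after at most one $\sim'$-adjustment within the $z_0$-families of $\lambda$ and $\lambda'$) a matched pair of removable $z_0$-boxes whose simultaneous removal yields $\mu, \mu' \in \mathcal{P}_\ell(n-1)$ with $W(\mu) = W(\mu')$; by induction $\mu \sim' \mu'$, and each link of this chain can be lifted back to an $n$-level $\sim'$-link by reinserting the removed box into a compatible $z$-family. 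The hard part is to arrange the choice of $z_0$ and the boxes so that \emph{all} residue weights $\wt_{z'}$, $z' \neq z_0$, are preserved by the reduction, and then to verify that the lifting procedure respects $\sim'$; this is essentially the Lyle--Mathas combinatorial classification of blocks for cyclotomic Hecke algebras, here to be reproved purely from the $\sl_2$-categorification data.
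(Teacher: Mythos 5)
Your overall architecture coincides with the paper's: the relation you call $\sim'$ is exactly the relation $\sim_f$ introduced there, the implication $\sim_b\Rightarrow\sim_w$ is dispatched the same way, and the step $\sim'\subseteq\sim_b$ is indeed obtained by exhibiting a common composition factor of all standards in a fixed $z$-family with a fixed $z$-weight, using the family filtration together with Lemma \ref{Lem:Hom_subquot} and the results on basic categorifications. One local slip there: the element you should use is the \emph{minimal} one $u=+\cdots+-\cdots-$, for which $\Delta(u)=L(u)$ by Lemmas \ref{Cor:stand_socle}, \ref{Lem:socle_mult} and which embeds into every $\Delta(\lambda)$ of the same weight by Proposition \ref{Prop:stand_hom}; for your choice of the maximal element $-\cdots-+\cdots+$ the asserted embedding $\Delta(u)\hookrightarrow\Delta(\lambda)$ goes the wrong way (the ``dominant Verma'' does not embed into the simple ``antidominant'' one). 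The argument is repairable, since every $\Delta(\lambda)$ embeds into $\Delta(-\cdots-+\cdots+)$, whose socle is $L(+\cdots+-\cdots-)$, but as written the step is incorrect.

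The genuine gap is the inclusion $\sim_w\subseteq\sim'$, which you rightly identify as the heart of the matter but leave as a plan rather than a proof, and the plan is missing two essential ingredients. First, your ``lifting'' of a $\sim'$-chain from level $n-1$ back to level $n$ by ``reinserting the removed box'' is not well defined link by link, since the intermediate multipartitions in the chain need not admit that box. What the paper uses instead is a categorical transport statement (Lemma \ref{Lem:blocks_ind}): if $\Ext^1(L,L')\neq 0$ and $\tilde e^k L,\tilde e^k L'\neq 0$, then $\tilde e^k L\sim_b\tilde e^k L'$; to apply it along an entire chain one must know that $\tilde e^k$ (resp. $\tilde f^k$) is nonzero on \emph{every} simple in the relevant block, and this is secured by first reducing to the case $\wt_z(\lambda)\leqslant 0$ for all $z$. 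Second, and more seriously, your reduction presupposes that (after one $\sim'$-adjustment) $\lambda$ and $\lambda'$ acquire removable boxes of a common residue $z_0$. This is exactly what can fail, and almost all of the paper's proof is devoted to deriving a contradiction when it does: one moves boxes within $z$-families (which preserves $\sim_f$), analyzes when the moving algorithm ``cycles'' (which forces $\kappa=p/q$ rational and ultimately $q=2$), and treats the cases $\lambda\cap\lambda'\neq\varnothing$ and $\lambda\cap\lambda'=\varnothing$ separately. Deferring all of this to ``essentially the Lyle--Mathas classification, to be reproved'' leaves the hard direction unestablished.
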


It is clear that $\lambda\sim_b \lambda'$ implies $ \lambda\sim_w \lambda'$. Also let us remark that there is an alternative
description of the equivalence relation $\sim_w$. Namely, to a multipartition $\lambda$ we can assign its residue $\operatorname{res}(\lambda)$, the multi-set of the residues of all boxes in that multi-partition. It is known (and not difficult
to check) that $\lambda\sim_w \lambda'$ if and only if $\res(\lambda)=\res(\lambda')$.

We start by noticing that in a basic categorification $L(t)$ and $L(s)$ lie in the same block if and only if
$\wt(t)=\wt(s)$.  Proposition \ref{Prop:stand_hom} together with Remark \ref{Rem:stand_hom} imply that if $\lambda,\mu$ lie in the same
$z$-family and $\wt_z(\lambda)=\wt_z(\mu)$, then $\lambda\sim_b\mu$. Indeed, in this case $\Delta(\lambda),\Delta(\mu)$
admit a nonzero morphism from the same standard object. This observation motivates us to introduce  yet another equivalence relation on $\mathcal{P}_\ell$. We write $\lambda\sim_f \lambda'$ if there is a sequence of
elements $\lambda_0=\lambda, \lambda_1,\ldots,\lambda_k=\lambda'$ with the following properties: there are
$z_1,\ldots,z_{k}\in \C/\kappa^{-1}\Z$ such that $\lambda_{i-1}$ and $\lambda_i$ lie in the same $z_i$-family
and have the same $z_i$-weight for all $i=1,\ldots,k$. The above discussion shows that $\lambda\sim_f \lambda'$
implies  $\lambda\sim_b \lambda'$.

\begin{Lem}\label{Lem:blocks_ind}
Let $\Cat$ be a highest weight $\sl_2$-categorification. Let $L,L'$ be simples such that $\tilde{e}^k L,\tilde{e}^k L'\neq 0$
for some positive number $k$. If $\Ext^1(L,L')\neq 0$, then $\tilde{e}^k L\sim_b \tilde{e}^k L'$.
\end{Lem}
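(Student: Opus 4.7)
The plan is to apply the exact divided power functor $E^{(k)}$ to a non-split extension $0\to L'\to M\to L\to 0$ representing the given nonzero class in $\Ext^1(L,L')$, and to exploit the biadjointness of $(E^{(k)},F^{(k)})$ inherited from that of $(E,F)$. The first ingredient is the observation that for any simple $N$ with $\tilde e^k N\neq 0$, every composition factor of $E^{(k)}N$ lies in the single block of $\tilde e^k N$: by biadjunction $\Hom(E^{(k)}N,S)\cong\Hom(N,F^{(k)}S)$ for a simple $S$, and by the dual crystal statement the head of $F^{(k)}S$ is $\tilde f^k S$, so this Hom is nonzero only when $S=\tilde e^k N$. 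Hence every direct summand of $E^{(k)}N$ has its head in the block of $\tilde e^k N$, which forces all composition factors into that block.

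Applying $E^{(k)}$ to the non-split extension yields $0\to E^{(k)}L'\to E^{(k)}M\to E^{(k)}L\to 0$. Assume for contradiction that $\tilde e^k L$ and $\tilde e^k L'$ lie in distinct blocks; then by the previous paragraph so do $E^{(k)}L$ and $E^{(k)}L'$, and the block decomposition of $\Cat$ forces $\Ext^1(E^{(k)}L,E^{(k)}L')=0$. Biadjointness rewrites this vanishing as $\Ext^1(L,F^{(k)}E^{(k)}L')=0$, and the strategy is to derive a contradiction by pushing the nonzero class $[M]\in\Ext^1(L,L')$ forward along the adjunction unit $\eta_{L'}\colon L'\to F^{(k)}E^{(k)}L'$ to produce a nonzero class in $\Ext^1(L,F^{(k)}E^{(k)}L')$.

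The main obstacle is showing that this pushforward is nonzero, which amounts to showing that $\eta_{L'}$ is a split monomorphism, equivalently that $L'$ is a direct summand of $F^{(k)}E^{(k)}L'$. The hard part is to upgrade the positivity of the classical Clebsch-Gordan number (which only gives the multiplicity of $L'$ as a composition factor of $F^{(k)}E^{(k)}L'$ on the Grothendieck group) to an actual direct summand decomposition in the abelian category. This should follow from the categorified Serre-type relations for $E^{(k)}$ and $F^{(k)}$ available in any Chuang-Rouquier $\sl_2$-categorification: at the functor level, on the weight component $\Cat_{\wt(L')}$, the composition $F^{(k)}E^{(k)}$ decomposes as a direct sum containing the identity functor with a positive multiplicity whenever $E^{(k)}L'\neq 0$. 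Granted such a decomposition, the split inclusion $L'\hookrightarrow F^{(k)}E^{(k)}L'$ exhibits $\Ext^1(L,L')$ as a direct summand of $\Ext^1(L,F^{(k)}E^{(k)}L')$, producing the desired nonzero class and completing the proof.
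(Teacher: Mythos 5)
Your opening two steps are correct and are actually a clean alternative to part of the paper's argument: the adjunction computation $\Hom(E^{(k)}N,S)\cong\Hom(N,F^{(k)}S)$ together with the Chuang--Rouquier fact that the socle of $F^{(k)}S$ is isotypic of type $\tilde{f}^kS$ shows that $\head(E^{(k)}N)$ is isotypic of type $\tilde{e}^kN$, hence every composition factor of $E^{(k)}N$ lies in the block of $\tilde{e}^kN$; and if $\tilde{e}^kL\not\sim_b\tilde{e}^kL'$ the sequence $0\to E^{(k)}L'\to E^{(k)}M\to E^{(k)}L\to 0$ would indeed split. The gap is in the last step. The functor-level claim that $F^{(k)}E^{(k)}$ contains $\operatorname{id}$ as a summand on $\Cat_{\wt(L')}$ whenever $E^{(k)}L'\neq 0$ is false: the categorified $\sl_2$-relations give $\operatorname{id}$ with multiplicity $\binom{-\wt(L')}{k}$, which vanishes unless $\wt(L')\leqslant -k$, whereas $E^{(k)}L'\neq 0$ only requires $h_+(L')\geqslant k$. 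Concretely, in the basic categorification of size $2$ take $L'=L(+-)$ (the antidominant simple Verma in the regular block of $\OCat(\sl_2)$) and $k=1$: here $\wt(L')=0$, $FE\cong EF$ on $\Cat_0$ with no identity summand, and $FEL'$ is the indecomposable big projective with Loewy layers $L(+-),L(-+),L(+-)$, so $L'$ embeds into $FEL'$ but is not a direct summand. Thus $\eta_{L'}$ need not split. Moreover, under the adjunction isomorphism $\Ext^1(L,F^{(k)}E^{(k)}L')\cong\Ext^1(E^{(k)}L,E^{(k)}L')$ the pushforward of $[M]$ along $\eta_{L'}$ is precisely the class of $E^{(k)}M$, so proving the pushforward nonzero is equivalent to proving $E^{(k)}M$ is non-split --- the route through $\eta_{L'}$ is circular unless splitness of the unit is established independently, which the counterexample rules out.

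The paper escapes this by bringing in the highest weight structure, which your argument never uses. After arranging $L>L'$ (replacing $\Cat$ by $\Cat^{opp}$ if needed), the non-split extension $M$ has head $L$, so $\Delta_L$, being projective in the Serre subcategory generated by the $L''\leqslant L$, surjects onto $M$. Applying $E^{(k)}$ gives a surjection $E^{(k)}\Delta_L\twoheadrightarrow E^{(k)}M$, and $E^{(k)}\Delta_L$ is filtered by standards from a single family, all of whose simple constituents lie in the block of $\tilde{e}^kL$ by the discussion preceding the lemma; since $\tilde{e}^kL'$ is a constituent of $E^{(k)}L'\subset E^{(k)}M$, it lies in that block. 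Some device of this kind --- a standardly filtered object covering $M$ --- is needed to replace your splitting of the unit.
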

\begin{proof}
We can assume that $L\not\leqslant L'$ in the highest weight order, otherwise we can replace $\Cat$ with $\Cat^{opp}$
switching $L$ and $L'$. Let $M$ be a nontrivial extension of $L$ by $L'$. Let $\Delta_L$ be the standard object with head $L$.
Then $\Delta_L$ is projective in the Serre subcategory of $\Cat$ spanned by all $L''\not> L$.
In particular, we have a nonzero morphism $\Delta_L\rightarrow M$. Since the extension is non-trivial,
the head of $M$ is also $L$. So the morphism $\Delta_L\rightarrow M$ is surjective.

It follows that the morphism $E^{(k)}\Delta_L\rightarrow E^{(k)}M$ is surjective. But $E^{(k)}\Delta_L$ is filtered with
standard subquotients that belong to the same family as $L$. So all simple constituents of $E^{(k)}\Delta_L$
lie in the same block as $\tilde{e}^k L$. But $\tilde{e}^k L'$ is one of these subquotients.
\end{proof}

\begin{proof}[Proof of Proposition \ref{Prop:block}]
Assume that $\lambda\sim_w\lambda'$ implies $\lambda\sim_b\lambda'$ for $|\lambda|<n$
and let us prove the claim for $|\lambda|=n$. Assume the converse: there are $\lambda,\lambda'$
with $|\lambda|=n$ and $\lambda\sim_w\lambda'$ such that $\lambda\not\sim_b\lambda'$.

First, we claim that $\wt_z(\lambda)\leqslant 0$ for all $z$. Indeed, otherwise elements
$\mu:=\tilde{f}_z^{\wt_z(\lambda)}\lambda, \mu':=\tilde{f}_z^{\wt_z(\lambda)}\lambda'$
are nonzero. We have $\mu\sim_w \mu'$ and $|\mu|<|\lambda|$. So $\mu\sim_b \mu'$. Moreover, for any element
$\mu''$ with $\mu''\sim_w \mu$ we have $\tilde{e}_z^{\wt_z(\lambda)}\mu''\neq 0$. Take a
collection $\mu_0:=\mu, \mu_1,\ldots, \mu_k:=\mu'$ such that, for each $i$, $\Ext^1(L(\mu_i),L(\mu_{i+1}))\neq 0$
or $\Ext^1(L(\mu_{i+1}),L(\mu_i))\neq 0$.
Lemma \ref{Lem:blocks_ind} implies that $\tilde{e}_z^{\wt_z(\lambda)} \mu_i\sim_b \tilde{e}_z^{\wt_z(\lambda)}\mu_{i+1}$
for all $i$. Hence $\lambda\sim_b\lambda'$.

Also Lemma \ref{Lem:blocks_ind} implies that $\lambda$ and $\lambda'$ cannot simultaneously
have removable $z$-boxes, for any $z$. Indeed, otherwise we can find $\tilde{\lambda}\sim_f \lambda,
\tilde{\lambda}'\sim_f \lambda'$ such that $h_{z,-}(\tilde{\lambda}),h_{z,-}(\tilde{\lambda}')>0$.
Then we can apply Lemma \ref{Lem:blocks_ind} to $L(\tilde{f}_z \tilde{\lambda}),L(\tilde{f}_z\tilde{\lambda}')$
and, since $\tilde{e}_z L\neq 0$ for any simple $L$
in the block of $L(\tilde{f}_z \tilde{\lambda})$, get that $\tilde{\lambda}\sim_b\tilde{\lambda}'$. Therefore,
by the discussion following the statement of Proposition \ref{Prop:block}, we have $\lambda\sim_b\lambda'$.

Now consider the case when $\lambda\cap \lambda'\neq \varnothing$. Let $i$ be the index of
a partition, where $\lambda$ and $\lambda'$ have common boxes. We will move boxes from
$\lambda$ using the following recipe. Start with the topmost removable box in the $i$th partition. By our assumption
on the weights of $\lambda$ there should be an addable box with the same residue. Place the removable
box to the place of that addable box.  This operation definitely does not change the class with respect
to $\sim_f$. If we choose the addable box below the removable one
or in a different partition, this operation does not cycle. We continue the operation until only one box
is left in the $i$th partition. If we can do this for both $\lambda$ and $\lambda'$, then we finish
with two multi-partitions, say $\tilde{\lambda},\tilde{\lambda'}$ that share the same removable box.

So let us consider the situation when the algorithm cycles. I.e., on some step we should encounter the situation
where we can only move the removable box to a position above in the same diagram. This is only possible when $\kappa$ is rational,
otherwise two boxes in the same diagram cannot have the same residue. So let $\kappa=\frac{p}{q}$, where $p,q$
are coprime.

Suppose we have this situation with $\lambda$. This means that on some step, for the diagram $\lambda^{(i)}=(\lambda_1^{(i)},\ldots, \lambda_m^{(i)})$, where $m$ is maximal with $\lambda_m^{(i)}\neq 0$, the number  $\lambda_m^{(i)}$ is divisible by $q$
(otherwise the box with coordinates $(m+1,1)$ has different residue from the box with coordinate $(m,\lambda_m^{(i)})$).
Let $\alpha$ be the residue (modulo $\kappa^{-1}$) of the rightmost box in the $m$th row.
Then we can move boxes from the $m$th row to the first column. In this way we subsequently get removable boxes
with residues $\alpha- i$ for $i=0,\ldots,\lambda_m^{(i)}-2$. In other words, we realize all possible residues that can occur
in the $i$th diagram with exception of one if $\lambda_m^{(i)}=q$, and that single residue is $\alpha-q+1$.

If $\lambda'^{(i)}$ also cycles and $q>2$, then we are done. If $\lambda'^{(i)}$ does not cycle and also initially
has more than one box, then we are also done, as in the process removable boxes with at least two different residues occur.
So we need to consider two situations: either 
\begin{itemize} \item $q=2$ and $\lambda'^{(i)}$ cycles or 
\item $\lambda'^{(i)}$ consists of a single box.\end{itemize}

Suppose that $\lambda'^{(i)}$ consists of a single box. The residue has to be $\alpha-q+1$. If in $\lambda'$
we have a removable box with residue $\alpha$, then we are done (getting removable boxes with same residues in
$\lambda$ and $\lambda'$). So we have only addable $\alpha$-boxes in $\lambda'$ and therefore $\wt_{\alpha}(\lambda)=\wt_{\alpha}(\lambda')>0$ (there is an addable box with residue $\alpha$ in $\lambda'$). So there should be more than one addable box with
residue $\alpha$ in $\lambda$ (because there is removable box with residue $\alpha$) and therefore our
algorithm for $\lambda$ does not cycle.

Now consider the case when $q=2$ and $\lambda'$ also cycles. There are only two possible residues for the boxes
in the $i$th diagram: $\alpha,\alpha+1$. In $\lambda^{(i)}$ (resp., $\lambda'^{(i)}$) all removable boxes have
residue $\alpha$ (resp., $\alpha+1$). Also there is only one addable box with residue $\alpha$ (resp., $\alpha+1$)
in $\lambda$ (resp., $\lambda'$) and this box is in the $i$th partition because our algorithm cycles for both $\lambda$ and $\lambda'$.
Since $\wt_\alpha(\lambda)\geqslant 0$, we see that there is only one removable box in $\lambda^{(i)}$.
The only possibility here is that $\lambda^{(i)}=(2)$. Similarly, $\lambda'^{(i)}=(2)$. A contradiction.
We have completed the case when $\lambda\cap\lambda'\neq \varnothing$.

Now consider the case when $\lambda\cap\lambda'=\varnothing$. We can also assume that if $\tilde{\lambda}\sim_f \lambda,
\tilde{\lambda}'\sim_f \lambda'$, then $\tilde{\lambda}\cap \tilde{\lambda}'=\varnothing$.

Pick an index $i$ with $\lambda^{(i)}\neq \varnothing, \lambda'^{(i)}=\varnothing$. Let $\alpha$ be the residue of
the box with coordinates $(0,0)$ in the $i$th diagram. Then $\lambda'$ has an addable box in  that position and no removable boxes
with that residue. But $\lambda$ has boxes with residue $\alpha$ and, since $\res(\lambda)=\res(\lambda')$,
so does $\lambda'$, say in the $j$th partition.

Now we can apply the same box moving algorithm to $\lambda'^{(j)}$ as before. We never encounter a
removable $\alpha$-box so the algorithm cycles. Then, again, $\kappa=\frac{p}{q}$ and in $\lambda'^{(j)}$ on various steps
we can have boxes with residues $\alpha+1,\ldots\alpha+q-1$. But we can also apply our algorithm to $\lambda^{(i)}$.
As above we conclude that $q=2$ and in $\lambda'^{(j)}$ we only have removable boxes with residues $\alpha+1$,
while in $\lambda^{(j)}$ we only have removable boxes with residues $\alpha$ and only one addable box with residue
$\alpha$. This forces $\lambda'$ to have a removable $\alpha$-box. We are done.
\end{proof}

\section{Subsequent developments}
In this section we describe some related developments that took place after this paper was first made public.
\subsection{Categorifications of tensor products}
Basic highest weight categorifications for $\sl_2$ studied in this paper  are special cases of tensor product categorifications
considered in \cite{LW} (when all tensor factors are the tautological $\sl_2$-modules). The main result
of \cite{LW} is a uniqueness theorem for tensor product categorifications. In particular, this theorem
shows that a basic categorification of size $n$ is unique and therefore yields an alternative proof
of results in Sections 6,7 in the case when the characteristic is 0. We remark that splitting techniques
partially generalizing those from Section 5 play an important role in \cite{LW}. In the positive characteristic case
the results of Sections 6,7 still seem to be new.

\subsection{Categorifications of higher level Fock spaces}
Another setting when one would want a uniqueness result is when  the algebra of interest is $\hat{\sl}_e$
and the module being categorified is a level $\ell$ Fock space. Here an example of a categorification is
given by cyclotomic Cherednik categories $\mathcal{O}$.
Another class of categorifications arise from parabolic affine categories $\mathcal{O}$, see \cite{Cher_mult}
for details. A concrete application of a uniqueness theorem would be a proof of a conjecture by Varagnolo
and Vasserot, \cite{VV}, relating Cherednik categories $\mathcal{O}$ to affine parabolic categories $\mathcal{O}$.
The Varagnolo-Vasserot conjecture was recently proved in \cite{RSVV},\cite{Cher_mult}. There is a more general
uniqueness result obtained in the latter paper but it requires using some other structures in addition
to a categorical action and a highest weight structure. The proof again extensively uses results
and constructions of the  present paper.


\begin{thebibliography}{99}
\bibitem[AM]{AndMaz} H.H. Andersen, V. Mazorchuk. {\it Category $\mathcal{O}$ for quantum groups}. arXiv:1105.5500.
\bibitem[BK1]{BK_functors}  J. Brundan and A. Kleshchev, {\it On translation functors for general linear and symmetric
groups}. Proc. London. Math. Soc. (3) {\bf 80}(2000), n.1, 75-106.
\bibitem[BK2]{BK_shift} J.~Brundan and A.~Kleshchev, {\it Representations of shifted Yangians and finite $W$-algebras},  Mem. Amer. Math. Soc.  {\bf 196}  (2008),  no. 918, viii+107 pp.
\bibitem[BS1]{BS1} J.~Brundan, C.~Stroppel, {\it Highest weight categories arising from Khovanov's diagram algebra I: cellularity}.
Mosc. Math. J. 11(2011), 685-722.
\bibitem[CKL]{CKL} S. Cautis, J. Kamnitzer, A. Licata. {\it Derived equivalences for cotangent bundles of
Grassmannians via categorical $\sl_2$-actions}. J. Reine Angew. Math. 675(2013), 53-99.
\bibitem[CR]{CR} J. Chuang and R. Rouquier, {\it Derived equivalences for symmetric groups and
$\mathfrak{sl}_2$-categorifications}. Ann. Math. (2)  167(2008), n.1, 245-298.
\bibitem[FKK]{FKK} I. Frenkel, M. Khovanov, A. Kirillov jr. {\it Kazhdan-Lusztig polynomials and canonical basis}.
Transform. groups 3(1998), no.4, 321-336.
\bibitem[FS]{FS} M. Feigin, A. Silantyev. {\it Singular polynomials from orbit spaces}.
Compos. Math. 148(2012), 1867-1879.
\bibitem[DG]{DG} C. Dunkl, S. Griffeth. {\it Generalized Jack polynomials and the
representation theory of rational Cherednik algebras}. Selecta Math. 16(2010), 791-818.
\bibitem[GGOR]{GGOR} V. Ginzburg, N. Guay, E. Opdam and R. Rouquier, {\it On the category $\mathcal{O}$ for rational
Cherednik algebras}, Invent. Math., {\bf 154} (2003), 617-651.
\bibitem[G]{Griffeth} S. Griffeth. {\it Orthogonal functions generalizing Jack polynomials}. Trans. Amer. Math. Soc. {\bf 362}(2010), 6131-6157.
\bibitem[K]{Kleshchev1} A. Kleshchev. {\it On decomposition numbers and branching coefficients for symmetric and special linear groups}.
Proc. London Math. Soc. (3) 75 (1997), no. 3, 497Ц558.
\bibitem[L1]{cryst} I. Losev. {\it Highest weight $\sl_2$-categorifications I: crystals}.  Math. Z. 274(2013), 1231-1247.
\bibitem[L2]{Cher_mult} I. Losev, {\it Proof of Varagnolo-Vasserot conjecture on cyclotomic categories $\mathcal{O}$}. arXiv:1305.4894.
\bibitem[LW]{LW} I. Losev, B. Webster. {\it On uniqueness of tensor products of irreducible categorifications}.
arXiv:1303.1336.
\bibitem[LM]{lylemat} S.~Lyle and A.~Mathas, {\it Blocks of cyclotomic Hecke algebras},  Adv. Math. {\bf 216}  (2007),  no. 2, 854--878.
\bibitem[MS]{MS} V. Mazorchuk and C. Stroppel. {\it Translation and shuffling of projectively presentable modules and a categorification of a parabolic Hecke module}. Trans. Amer. Math. Soc. {\bf 357}(2005), 2939-2973.
\bibitem[R1]{rouqqsch} R. Rouquier, {\it $q$-Schur algebras for complex reflection groups}. Mosc. Math. J. 8 (2008),
119-158.
\bibitem[R2]{Rouquier_2Kac} R. Rouquier, {\it 2-Kac-Moody algebras}. arXiv:0812.5023.
\bibitem[RSVV]{RSVV} R. Rouquier, P. Shan, M. Varagnolo, E. Vasserot. {\it Categorification and cyclotomic rational double affine
Hecke algebras}. arXiv:1305.4456.
\bibitem[S]{Shan}  P. Shan. {\it Crystals of Fock spaces and cyclotomic rational double affine Hecke algebras}.
Ann. Sci. Ecole Norm. Sup. {\bf 44} (2011), 147-182.
\bibitem[SV]{shanvasserot} P.~Shan and E.~Vasserot, {\it Heisenberg algebras and rational double affine Hecke algebras}. J. Amer. Math. Soc. 25(2012), 959-1031.
\bibitem[SW]{SW} C. Stroppel, B. Webster. {\it Quiver Schur algebras and q-Fock space}. arXiv:1110.1115.
\bibitem[VV]{VV} M.~Varagnolo and E.~Vasserot, {\it Cyclotomic double affine Hecke algebras and affine parabolic category $\mathcal{O}$},
Adv. Math. {\bf 225} (2010), 1523--1588.
\bibitem[W]{Webster} B. Webster. {\it Knot invariants and higher representation theory I: diagrammatic and geometric categorification of tensor products}. arXiv:1001.2020.
\end{thebibliography}
\end{document}